\documentclass[11pt]{article}

\usepackage[a4paper,margin=1.02in]{geometry}
\usepackage{amsmath,amssymb,amsthm,mathtools,mathrsfs}
\usepackage{enumitem}
\usepackage[colorlinks=true,linkcolor=blue,citecolor=blue,urlcolor=blue]{hyperref}

\newtheorem{theorem}{Theorem}[section]
\newtheorem{proposition}[theorem]{Proposition}
\newtheorem{lemma}[theorem]{Lemma}

\theoremstyle{remark}
\newtheorem{remark}[theorem]{Remark}

\newcommand{\R}{\mathbb R}
\newcommand{\dd}{\,\mathrm d}
\newcommand{\Dt}{D_t}
\newcommand{\divg}{\operatorname{div}}
\newcommand{\norm}[1]{\left\lVert#1\right\rVert}
\newcommand{\abs}[1]{\left\lvert#1\right\rvert}
\newcommand{\pos}[1]{\left[#1\right]_+}
\newcommand{\Q}{\mathcal Q}
\newcommand{\Y}{\mathcal Y}
\newcommand{\Scal}{\mathcal S}
\newcommand{\U}{\mathscr U}
\newcommand{\V}{\mathscr V}
\newcommand{\Pcal}{\mathscr P}
\newcommand{\Z}{\mathscr Z}

\title{Sharp sign criterion for global existence and blow-up of strong solutions with arbitrarily large initial data in the 3D spherically symmetric compressible Navier-Stokes equations}
\author{
    \textbf{Xiangdi Huang}$^{a}$\thanks{%
        E-mail addresses:
        xdhuang@amss.ac.cn (X.~Huang);
        bingshuli.math@gmail.com (B.~Li).
    },
    \textbf{Bingshu Li}$^{b}$
    \\[0.5em]
    \parbox{0.98\textwidth}{%
        \centering
        \small
        a. State Key Laboratory of Mathematical Sciences,
        Academy of Mathematics and Systems Science,\\
        Chinese Academy of Sciences,
        Beijing 100190, China;
        \\[0.4em]
        b. School of Mathematics, Jilin University,
        Changchun 130012, China;
    }
}
\date{}

\begin{document}

\maketitle

\begin{abstract}
	We establish a sharp criterion for global-in-time existence versus finite-time blow-up of strong solutions to the 3D spherically symmetric compressible Navier-Stokes equations on a solid ball, based solely on the initial sign of effective velocity. The viscosity coefficients are assumed to satisfy the Bresch-Desjardins structure $\mu=\rho^{\alpha}$, $\lambda=(\alpha-1)\rho^{\alpha}$, with the effective velocity given by $v=u+\alpha \rho^{\alpha-2}\rho_r$. Previously, global existence results for strong solutions in higher dimensions were restricted to the case $\alpha\le 1$ with the endpoint $\alpha=1$ corresponding to the viscous Saint-Venant (shallow water) system. In this paper, we extend the global existence theory beyond this threshold to the supercritical regime $\alpha>1$. Specifically, whenever the initial effective velocity is nonnegative on the boundary of the ball, we prove the global-in-time existence of strong solutions for arbitrarily large initial data. This provides the first result for $\alpha>1$ in the multi-dimensional setting. The main difficulty lies in deriving a uniform lower bound for the density, which requires handling singular estimates at the center of the ball. To overcome this, we exploit several novel quantities near the center and employ a refined maximum principle to establish, for the first time, the Lipschitz continuity of the velocity at the center-thereby resolving the singularity of the velocity gradient. Subsequently,  we derive Dini-Gronwall-type inequalities for suitably constructed functions, which close the density lower bound estimate. This argument crucially relies on the non-negativity of the initial effective velocity on the boundary. Conversely, we can construct a family of initial data for which the initial effective velocity is negative on the boundary, such that the corresponding strong solutions blow up in finite time, with vacuum appearing exactly on the boundary at the blow-up time. Our results establish a clean dichotomy between global regularity and singularity formation, governed purely by the initial sign of the effective velocity.
\end{abstract}

\medskip
{\small
\noindent\textbf{Keywords:}
Compressible Navier--Stokes equations;
density-dependent viscosities;
spherical symmetry;
lower density bound;
global strong solutions;
boundary vacuum formation.
\par\smallskip
\noindent\textbf{Mathematics Subject Classification(2020):}
35Q30;  76N10; 35B44; 35B45.
\par}

\clearpage
\setcounter{tocdepth}{2}
\tableofcontents
\bigskip

\section{Introduction}

In this paper, we consider the isentropic compressible Navier--Stokes equations with density dependent viscosity coefficients in the ball $B_R\subset\R^3$:
\begin{equation}
\label{eq:ns-system}
\left\{
\begin{aligned}
 &\partial_t\rho+\divg(\rho\boldsymbol u)=0,\\
 &\partial_t(\rho\boldsymbol u)
   +\divg(\rho\boldsymbol u\otimes\boldsymbol u)
   +\nabla P(\rho)
   -\divg\!\bigl(\mu(\rho)\mathbb D\boldsymbol u\bigr)
   -\nabla\!\bigl(\lambda(\rho)\divg\boldsymbol u\bigr)=0,
\end{aligned}
\right.
\end{equation}
where
\[
 \mathbb D\boldsymbol u
 :=\frac{\nabla\boldsymbol u+(\nabla\boldsymbol u)^{\mathsf T}}{2},
 \qquad
 P(\rho)=\rho^\gamma,
 \qquad
 \mu(\rho)=\rho^\alpha,
 \qquad
 \lambda(\rho)=(\alpha-1)\rho^\alpha.
\]
With the stress convention in \eqref{eq:ns-system}, these coefficients satisfy
the Bresch--Desjardins compatibility relation.  We focus on the regime
$\alpha>1$ and $\gamma>\alpha$ and study spherically symmetric solutions on
$B_R$ with the no-slip boundary condition
\[
\boldsymbol u(t,x) = 0, \quad \text{ on } \partial B_R.
\]

For a spherically symmetric solution, write
\[
 \rho(t,x)=\rho(t,r),
 \qquad
 \boldsymbol u(t,x)=u(t,r)\frac{x}{r},
 \qquad
 r=|x|\in[0,R].
\]
Then \eqref{eq:ns-system} takes the radial form
\begin{equation}
\label{eq:radial-system}
\left\{
\begin{aligned}
 &\rho_t+(\rho u)_r+\frac{2}{r}\rho u=0,\\
 &(\rho u)_t+(\rho u^2)_r+\frac{2}{r}\rho u^2
   +(\rho^\gamma)_r
   -\left[\frac{\alpha}{r^2}\rho^\alpha(r^2u)_r\right]_r
   +\frac{2}{r}(\rho^\alpha)_r u=0.
\end{aligned}
\right.
\end{equation}

Vacuum is one of the main obstructions to classical solvability for compressible viscous flow.  If the viscosities vanish with the density, the approach to $\rho=0$ simultaneously degenerates the parabolic smoothing and weakens the available control of the velocity.  The central problem is whether, in the supercritical regime $\alpha>1$, an initially smooth and strictly positive density remains uniformly bounded away from zero or can instead develop vacuum somewhere in the ball.

The classical constant viscosity theory provides the broad background. Matsumura and Nishida established global smooth solutions for small perturbations of a positive equilibrium \cite{MatsumuraNishida1980}, while Lions \cite{Lions1998} and Feireisl--Novotn\'y--Petzeltov\'a \cite{FeireislNovotnyPetzeltova2001} developed the global finite energy weak theory for large data and possible vacuum. For density dependent viscosities, the decisive structural development was the additional entropy discovered by Bresch and Desjardins \cite{BreschDesjardins2003} and developed with Lin \cite{BreschDesjardinsLin2003}.  The associated modified velocity couples the physical velocity to a density gradient and yields estimates unavailable for generic viscosity laws.

This BD structure underlies a substantial theory of weak solutions.  Among them, Mellet and Vasseur obtained stability estimates \cite{MelletVasseur2007}, Guo--Jiu--Xin constructed global spherically symmetric weak solutions \cite{GuoJiuXin2008}, Li--Xin \cite{LiXin2015} and
Vasseur--Yu \cite{VasseurYu2016} treated multidimensional degenerate systems, and Bresch--Vasseur--Yu later covered the physical symmetric stress and broad nonlinear viscosity laws \cite{BreschVasseurYu2022}. In particular, the latter theory includes power laws on both sides of $\alpha=1$.  Global weak existence is therefore not the issue addressed here: weak solutions may contain vacuum and do not decide whether an initially positive classical branch stays uniformly separated from zero.

One-dimensional results revealed a complementary role of the effective variable in strong theory.  Haspot obtained global strong solutions in a subcritical range \cite{Haspot2018}, while Constantin--Drivas--Nguyen--Pasqualotto \cite{ConstantinEtAl2020} and Burtea--Haspot \cite{BurteaHaspot2020} obtained results extending into $\alpha>1$ under one-sided or Oleinik-type hypotheses on an effective quantity.  The one-dimensional problem, however, has neither the coordinate singularity at the center of a ball nor the constraint of a boundary at finite radius present here.  We also distinguish our direction of vacuum dynamics from that of Li--Li--Xin \cite{LiLiXin2008}: their entropy weak solutions may start with vacuum which subsequently disappears, whereas our second result starts from a strictly positive classical state and produces vacuum on a fixed boundary.

Spherical symmetry reduces the system to one radial variable but does not turn it into a nonsingular one-dimensional problem.  The weights $r^2$, the quotient $u/r$, and the difference $u_r-u/r$ make the origin a genuine analytic difficulty.  Earlier spherical vacuum results often locate the exceptional behavior at this center.  Xin--Yuan proved that vacuum does not form away from the origin under their noninteraction assumptions and also gave a separate criterion on velocity regularity excluding vacuum formation \cite{XinYuan2006}, and Guo--Wang--Wang obtained a spherical theory for large data and another viscosity law in which positivity is protected away from $r=0$ \cite{GuoWangWang2018}. By contrast, the loss of density constructed below occurs on the outer boundary and nowhere else.

The closest related result is the recent classical theory for large data in the same BD problem on a fixed ball with no slip.  Zhang proved global radial strong solutions for arbitrary strictly positive data in the ranges
\[
 N=2:\quad \frac45\leq\alpha<1,\quad \gamma>1,
 \qquad
 N=3:\quad \frac78\leq\alpha<1,\quad 1<\gamma<9\alpha-6
\]
\cite{Zhang2025}.  Huang--Meng--Zhang \cite{HuangMengZhang2025} enlarged this to
\[
 N=2:\quad \frac12<\alpha<1,\quad \gamma>1,
\]
and, with the number $n_3(\alpha)$ defined in their paper, to
\[
 N=3:\quad 0.686<\alpha<1,
 \qquad
 1<\gamma<6\alpha-3+\frac{3-5\alpha}{2n_3(\alpha)}.
\]
They also treated a two-dimensional endpoint at $\alpha=1$ under an additional pressure restriction and constructed global radial weak solutions allowing vacuum in portions of the range $\alpha\geq1$.  Lei subsequently obtained classical solutions and proved that vacuum does not form in finite time for
\[
 N=2:\quad 0.54369<\alpha<1,\quad \gamma>1,
 \qquad
 N=3:\quad 0.67661<\alpha<1,\quad 1<\gamma<6\alpha-3
\]
\cite{Lei2026}.  Chen--Zhang--Zhu proved large radial regular solutions with vacuum in the far field for $1/2<\alpha<1$ \cite{ChenZhangZhu2026}, and X.~Zhang studied a three-dimensional isothermal exterior problem with vacuum in the far field \cite{XingyangZhang2026}.  These domains and states in the far field differ from the positive problem with a fixed boundary considered here, but the results make the threshold in the existing unrestricted radial classical theory clear.

There are also multidimensional strong results for $\alpha>1$ under other
structures.  Yu \cite{Yu2023} and Huang--Li--Zhang \cite{HuangLiZhang2026} treated the three-dimensional Cauchy problem around a sufficiently large positive density level; Xin--Zhu \cite{XinZhu2021} studied a special class of expanding velocities with vacuum at infinity; and Du--Guo--Xu \cite{DuGuoXu2026} proved a result on a bounded domain with slip boundary
conditions, a large external potential, and a sufficiently large density lower bound. Constructions with moving interfaces and free boundaries constitute another neighboring theory: Guo--Xu--Zhang \cite{GuoXuZhang2025} splice positive interior and exterior solutions across a moving stress-free interface, while Chen--Zhang--Zhu \cite{ChenZhangZhuFree2026} treat a broader vacuum class based on distance that includes the physical vacuum free boundary.  Finally, the implosion construction of Merle--Rapha\"el--Rodnianski--Szeftel \cite{MerleEtAl2022} and its degenerate viscosity counterpart due to Chen--Liu--Zhu \cite{ChenLiuZhu2026} concern blowup of the density at the origin rather than the formation of vacuum on the boundary.

We next explain why $\alpha=1$ is a structural threshold for excluding vacuum. Define the BD potential $\Phi_\alpha$ by
\begin{equation}
 \label{eq:intro-BD-potential}
 \Phi_\alpha'(\rho)=\alpha\rho^{\alpha-2},
 \qquad
 \Phi_\alpha(\rho)=
 \begin{cases}
 \displaystyle
 \frac{\alpha}{\alpha-1}\bigl(\rho^{\alpha-1}-1\bigr),
 &\alpha\ne1,\\[6pt]
 \log\rho,&\alpha=1.
 \end{cases}
\end{equation}
The radial effective velocity then satisfies
\begin{equation}
 \label{eq:intro-effective-velocity}
 v-u=\partial_r\Phi_\alpha(\rho).
\end{equation}
Thus suitable estimates for the physical and effective velocities control the spatial variation of a transformed density variable.

For $\alpha\le1$, one has
\[
 \Phi_\alpha(\rho)\longrightarrow-\infty
 \qquad\text{as }\rho\downarrow0.
\]
In this transformed variable, vacuum is therefore infinitely far from every
positive density value.  Since conservation of positive mass guarantees that the density attains at least its positive spatial average somewhere in the ball, sufficiently strong control of the spatial variation of this singular variable prevents the density from reaching zero across a bounded domain.  In the existing $\alpha<1$ radial arguments, this principle is realized through estimates for nearby singular negative powers of the density; at the two-dimensional endpoint $\alpha=1$, it is realized through control of $\partial_r\log\rho=v-u$; see \cite[Sections~3.3--3.4 and 4.2]{HuangMengZhang2025}.  For $\alpha>1$, by contrast, $\Phi_\alpha$ remains finite at $\rho=0$, so this mechanism for a lower bound disappears.

The same threshold is reflected in the scale of viscous diffusion per unit
mass,
\[
 \frac{\mu(\rho)}{\rho}=\rho^{\alpha-1}.
\]
It becomes stronger near vacuum for $\alpha<1$, remains of order one for
$\alpha=1$, and vanishes for $\alpha>1$.  Hence, in the supercritical regime, the singular density barrier is lost precisely where the viscous regularization becomes weakest.

The boundary trace explains which profile in mass coordinates is relevant.  For every positive radial classical solution, the effective equation derived below is
\begin{equation}
 \label{eq:intro-effective-equation}
 \Dt v+F_1(\rho)(v-u)=0,
 \qquad
 F_1(\rho)=\frac\gamma\alpha\rho^{\gamma-\alpha}.
\end{equation}
Since $u(t,R)=0$, the fixed boundary is a material trajectory and
\begin{equation}
 \label{eq:intro-wall-ode}
 \partial_t v(t,R)+F_1(\rho(t,R))v(t,R)=0,
 \qquad
 v(t,R)=v_0(R)
 \exp\!\left(-\int_0^tF_1(\rho(s,R))\dd s\right).
\end{equation}
Thus the sign of $v(t,R)$ is preserved throughout the positive classical
lifespan.

To see the corresponding density geometry, introduce
\[
 \delta(t,r):=\int_r^R\rho(t,s)s^2\dd s.
\]
Thus $\delta$ is the mass contained between $r$ and the boundary; the systematic formulation in mass coordinates is developed below.  Since $\delta_r=-\rho r^2$, the identity for the effective velocity gives
\[
 (\rho^\alpha)_\delta=-\frac{v-u}{r^2}.
\]
At the no-slip boundary this becomes
\[
 (\rho^\alpha)_\delta(t,0)=-\frac{v(t,R)}{R^2}.
\]
Consequently, a finite nonzero negative boundary trace $v(t,R)<0$ selects a finite positive inward mass slope:
\[
 \rho^\alpha(t,\delta)
 =\rho(t,R)^\alpha+A(t)\delta+o(\delta),
 \qquad
 A(t)=-\frac{v(t,R)}{R^2}>0.
\]
If the boundary density tends to zero while this slope remains nondegenerate, the candidate terminal tangent is therefore
\[
 \rho^\alpha\sim A\delta.
\]

It remains to ask whether such a mass profile can occupy a collar of finite
physical thickness.  A mass slice $\dd\delta$ has radial thickness
\[
 \dd(R-r)=\frac{\dd\delta}{\rho r^2}.
\]
For $\rho^\alpha\sim A\delta$, one has $\rho\sim(A\delta)^{1/\alpha}$, and therefore, near a finite boundary where $r\sim R$,
\[
 R-r\sim \frac{1}{R^2A^{1/\alpha}}
 \int_0^\delta s^{-1/\alpha}\dd s.
\]
When $\alpha>1$,
\[
 \int_0^\delta s^{-1/\alpha}\dd s
 =\frac{\alpha}{\alpha-1}\delta^{(\alpha-1)/\alpha}<\infty.
\]
Equivalently, the corresponding behavior in physical space is
\[
 \rho(r)\sim C(R-r)^{1/(\alpha-1)}.
\]
For $\alpha\le1$, the same integral diverges, so an affine $\rho^\alpha$ profile cannot connect positive interior density to vacuum at a boundary of finite radius. It is precisely the mechanism of finite capacity exploited by the construction in the later sections.

Our first theorem uses the favorable sign.  For $\alpha>1$ and
$\gamma>\alpha$, if a positive radial classical solution has bounded density above on a finite time interval and $v_0(R)\geq0$, then the density is bounded away from zero on the entire ball during that interval.  The proof combines the boundary condition with auxiliary quantities compatible with the center, a refined maximum principle for reciprocal density variables, and Dini--Gronwall estimates.  In the exponent range specified below, an upper bound for the density is available on every bounded time interval.  The conditional lower bound then yields a global positive radial classical solution for every compatible datum with nonnegative boundary trace in that range.

Our construction theorem realizes the opposite sign.  In the same exponent
range \eqref{eq:exponent-range}, we construct smooth compatible radial data
with strictly positive density and $v_{0,k}(R)<0$ whose maximal positive
classical lifespan satisfies $T_k^*\leq Ck^{(\alpha-1)/\alpha}$.  The density converges uniformly at the terminal time to a nontrivial profile whose zero set is exactly $\{R\}$. Consequently, the breakdown is finite-time cavitation at the fixed boundary.

Together, these results identify the initial boundary effective velocity as a boundary order parameter for the supercritical radial BD problem.  They show two rigorously opposite mechanisms for the boundary density selected by the same trace whose sign is preserved.  Note that the result for negative boundary trace is an explicit construction, not a classification of all data with $v_0(R)<0$.

The paper is organized as follows.  Section~2 states the main results and records their logical dependencies.  Section~3 fixes the solution framework, mass coordinates, effective variables, and local continuation principle.  Section~4 establishes the estimates on bounded time intervals used both in the global corollary for nonnegative boundary trace and in the construction.  Section~5 proves the lower density theorem and the global corollary.  Section~6 constructs the family with negative boundary trace, proves the endpoint by finite capacity, and localizes the terminal vacuum on the boundary.

\section{Main results}

Throughout the paper, a \emph{positive radial classical solution} is a radial solution of \eqref{eq:ns-system} with $\rho>0$ whose density and velocity have the regularity stated in \eqref{eq:local-rho-class}--\eqref{eq:local-u-class} on every compact time
interval in their lifespan.  Smooth radial parity is understood at the
center; in particular, $\rho_r(t,0)=0$ and $u(t,0)=0$.  At the outer boundary we impose $u(t,R)=0$.  Initial data are called \emph{compatible} when they satisfy the condition in Proposition~\ref{prop:local-restart}.

The parameter range used for the unconditional global corollary and for the construction of boundary vacuum is
\begin{equation}
	\label{eq:exponent-range}
	1<\alpha<11.7,
	\qquad
	2\alpha-1\le\gamma<3\alpha-1.
\end{equation}
The upper density and continuation estimates on bounded time intervals used below are available throughout \eqref{eq:exponent-range} from the corresponding higher-dimensional radial theory of Huang--Meng--Zhang \cite{HuangMengZhang2025}.

We now introduce the two main results of this paper.

\begin{theorem}[Lower density bound for nonnegative boundary trace]
	\label{thm:main}
	Let $\alpha>1$ and $\gamma>\alpha$. Let $(\rho,u)$ be a
	maximal positive radial classical solution of \eqref{eq:ns-system}
	on $[0,T)\times B_R$, where $T\in(0,\infty)$ is its maximal
	existence time in the class of positive radial classical solutions.
	Assume that
	\begin{equation}
		\label{eq:upper-density}
		\sup_{0\le t<T}\|\rho(t)\|_{L^\infty(B_R)}
		=: \overline\rho_T<\infty,
	\end{equation}
	and that the initial effective velocity has nonnegative
	boundary trace:
	\[
		v_0(R)\ge0.
	\]
	Then there exists a constant $c_T>0$, depending only on
	$R,\alpha,\gamma,T,\overline\rho_T$ and the initial data,
	such that
	\begin{equation}
		\label{eq:main-conclusion}
		\rho(t,r)\ge c_T
		\qquad
		(0\le t<T,\ 0\le r\le R).
	\end{equation}
\end{theorem}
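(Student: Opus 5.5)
We work in the Lagrangian (mass) coordinate along particle paths, with $\Dt$ the material derivative. The starting point is the effective equation \eqref{eq:intro-effective-equation}, $\Dt v+F_1(\rho)(v-u)=0$, together with the boundary formula \eqref{eq:intro-wall-ode}.

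\textbf{Step 1: a one-sided bound on the effective velocity.} On $[0,T)$ we have $F_1(\rho)\le \frac\gamma\alpha\overline\rho_T^{\,\gamma-\alpha}$. Writing the equation as $\Dt v=-F_1 v+F_1 u$ and integrating along trajectories, we obtain
\[
\|v(t)\|_{L^\infty}\le \|v_0\|_{L^\infty}+C\int_0^t\|u(s)\|_{L^\infty}\dd s .
\]
We first want $u$ bounded in $L^\infty$ on $[0,T)$ in terms of $\overline\rho_T$. We plan to get this from the BD entropy and the energy in the radial setting away from the center, combined with the estimates on bounded time intervals of Section~4. Combining this with \eqref{eq:intro-effective-velocity} bounds $\partial_r\Phi_\alpha(\rho)=\partial_r\bigl(\frac{\alpha}{\alpha-1}\rho^{\alpha-1}\bigr)$ in $L^\infty$, hence $\rho^{\alpha-1}$ is uniformly Lipschitz in $r$. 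Because $\alpha>1$, this alone cannot exclude vacuum; this is exactly the obstruction explained in the introduction. So Lipschitz control must be supplemented by a dynamical argument.

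\textbf{Step 2: the continuity equation.} Along particle paths $\Dt\log\rho=-\divg\boldsymbol u=-(u_r+2u/r)$. A lower bound on $\rho$ therefore reduces to an upper bound on $\int_0^t\|(\divg \boldsymbol u)_+\|_{L^\infty}\dd s$, or at least to a Gronwall-closable inequality for it. We use the momentum equation in its effective form: the effective viscous flux $\alpha\rho^\alpha\divg\boldsymbol u-\rho^\gamma$ is expressed through $\rho\Dt u$ and integrated in $r$, the no-slip condition fixing the boundary constant. This gives
\[
\divg\boldsymbol u\le \rho^{\gamma-\alpha}/\alpha+\rho^{-\alpha}\,\bigl(\text{terms in }v,u\bigr).
\]
For this to be useful we must control $\rho^{-\alpha}$ times those terms, and that is where the sign of $v$ enters. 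Since $v(t,R)\ge0$ is preserved by \eqref{eq:intro-wall-ode}, near the wall $(\rho^\alpha)_\delta=-(v-u)/r^2\le 0$ at $\delta=0$. So density cannot decrease toward the boundary to leading order, which rules out the finite-capacity collapse mechanism. We intend to quantify this by a maximum principle applied to $w=\rho^{-\beta}$ for a suitable $\beta>0$. This $w$ satisfies a degenerate parabolic inequality in mass coordinates whose boundary values are controlled because $v(t,R)\ge0$ makes $\partial_\delta w\ge -C w$ at $\delta=0$, an oblique/Robin-type condition of favorable sign. In the interior, we get a Dini-type inequality
\[
D^+ \max w(t)\le C\,\max w(t)\,\bigl(1+\log \max w(t)\bigr),
\]
which integrates to a finite bound on $[0,T)$.

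\textbf{Step 3: the center (main obstacle).} The terms $u/r$ and $u_r-u/r$ in $\divg\boldsymbol u$ and in the equation for $w$ are singular at $r=0$. We will prove that $u/r$ is bounded near the center, i.e.\ that $u$ is Lipschitz at the center with a quantitative bound. We try the auxiliary quantity $u/r$ (equivalently $\int_0^r\rho^{-1}\ldots$ in mass variables), which satisfies a parabolic equation in the variable $r^3$ that is regular at the origin. We then apply a refined maximum principle on a small ball $r\le r_0$ whose boundary values are already controlled by Steps 1--2 away from the origin. Once $\|u/r\|_{L^\infty}$ and hence $\|\divg\boldsymbol u\|$ near the center are bounded by a Dini-type function of $\max w$, the Gronwall inequality closes uniformly. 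This gives $\rho\ge c_T$ with $c_T$ depending only on $R,\alpha,\gamma,T,\overline\rho_T$ and the data, which is \eqref{eq:main-conclusion}. We expect the center estimate, and in particular coupling it to the boundary maximum principle without circularity, to be the hardest part.
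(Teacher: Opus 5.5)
Your outline has the right overall shape: a maximum principle for a reciprocal power of $\rho$, a favorable boundary condition from the preserved sign of $v(t,R)$, a Dini--Gronwall closure, and a center estimate for $u/r$. However, the mechanism that actually closes the maximum principle is missing.

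For $z=1/\rho$, the continuity equation combined with $v-u=\partial_r\Phi_\alpha(\rho)$ gives
\[
\Dt z=z\,\divg v+\alpha z^{1-\alpha}\Bigl(z_{rr}+\frac2r z_r\Bigr)-\alpha^2z^{-\alpha}z_r^2,
\qquad
z\,\divg v=\Y-F_2(\rho)+2\frac vr\,z,
\]
with $\Y=v_r/\rho+F_2(\rho)$. The diffusion has the good sign at a maximum, so everything reduces to an upper bound for $\Y$. That bound comes from differentiating $\Dt v+F_1(\rho)(v-u)=0$ in $r$: $\Y$ solves the linear ODE \eqref{eq:Y-equation} along particle paths, and its quadratic term $-(F_1'/\mu')(v-u)^2$ is nonpositive. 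This yields pathwise bounds such as $\Y\le C_T+C_T\int_0^t\rho^{\gamma-\alpha-1}\dd s$. Hence one gets the Volterra-type inequality $D^+\Q\le C_T\bigl(1+\Q+\int_0^t(1+\Q)\dd s\bigr)$, closed by Lemma~\ref{lem:Dini-Gronwall} applied to $1+\Q+\int_0^t(1+\Q)\dd s$.

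Your Step~2 goes instead through the effective viscous flux $\alpha\rho^\alpha\divg\boldsymbol u-\rho^\gamma$, obtained by integrating $\rho\Dt u+\frac2r(\rho^\alpha)_ru$ in $r$. This has three problems:
\begin{enumerate}
\item it involves the uncontrolled term $\rho\Dt u$;
\item the boundary constant contains $u_r(t,R)$, which no-slip does not fix;
\item what remains is $\rho^{-\alpha}$ times interior terms in $u,v$, which the sign of $v$ at $r=R$ cannot control.
\end{enumerate}
The inequality $D^+\max w\le C\max w(1+\log\max w)$ is asserted, not derived. Also, at the boundary you need $w_\delta(t,0)\ge0$, i.e.\ $z_r(t,R)=-v(t,R)z^\alpha/\alpha\le0$, plus a second-order Taylor argument when $v_0(R)=0$. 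The weaker condition $\partial_\delta w\ge-Cw$ would not exclude a boundary maximum.

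Steps~1 and~3 also rest on bounds that are not available. Neither the energy and BD identities nor Section~4 give $\sup_{[0,T)}\|u\|_{L^\infty}$. Those estimates are $\rho$-weighted and say nothing near vacuum. Section~4 is moreover only valid in the range \eqref{eq:exponent-range}, whereas the theorem is stated for all $\alpha>1$, $\gamma>\alpha$. So the boundary data you want to feed into a localized center argument on $\{r\le r_0\}$ are not available.

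Moreover, a two-sided bound on $u/r$ over the whole interval $[0,T)$ is neither available nor needed; the Riccati argument of Proposition~\ref{prop:normalized} gives it only on a short interval. What the paper proves is one-sided and global: $(u/r)_++(v/r)_+\le C_T$, via a joint maximum principle for $\Pcal=u/m^{1/3}$ and $\Z=v/m^{1/3}$.
\begin{itemize}
\item Normalizing by the material quantity $m^{1/3}$ instead of $r$ makes the identity $\Z_t=F_1(\Pcal-\Z)$ exact.
\item Once $\max\{\Pcal_+,\Z_+\}$ exceeds $C_A\max\sigma$, with $\sigma=r/m^{1/3}$ and $C_A=F_T/(2(\alpha-1))$, every term at the active maximum is nonpositive.
\item The growth of $\sigma$ is then closed by Gronwall because $\sigma_t=\Pcal$.
\item The negative part is controlled only along paths, $\int_0^t(u/r)_-\dd s\le C_T$, from $\sigma_t=\sigma\,u/r$ and $\sigma\ge(3/\overline\rho_T)^{1/3}$.
\end{itemize}
These one-sided and pathwise bounds are exactly what is needed to bound the kernel $\exp\bigl(-\int(F_1-2u/r)\bigr)$, the source in the $\Y$-equation, and the term $2(v/r)z$ above.
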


\begin{remark}
The conclusion of Theorem~\ref{thm:main} is a lower bound that remains uniform as $t$ approaches the maximal existence time $T$. Such a bound does not follow merely from classical positivity: positivity on $[0,T)$ only guarantees a positive lower bound on each compact time interval $[0,\tau]$ with $\tau<T$.

For compatible initial data in the exponent range \eqref{eq:exponent-range}, this estimate yields global existence. Indeed, suppose that the maximal existence time $T$ were finite. Proposition~\ref{prop:bounded-time} supplies a uniform upper density bound on $[0,T)$, and the condition $v_0(R)\ge0$ allows Theorem~\ref{thm:main} to give
$$
0<c_T\le\rho(t,r)\le\overline\rho_T<\infty
\qquad (0\le t<T,\ 0\le r\le R).
$$
The continuation criterion in Proposition~\ref{prop:bounded-time} then permits the positive classical solution to extend beyond $T$, contradicting maximality. Hence the maximal existence time is infinite,
as stated in Corollary~\ref{cor:global-good-wall}.

The resulting density bounds are uniform on every finite
time interval; the constants may depend on the time horizon.
\end{remark}

As a direct application of the theorem, we have the following corollary.

\begin{theorem}[Global solutions for nonnegative boundary trace]
	\label{cor:global-good-wall}
	Assume \eqref{eq:exponent-range} holds.  Let $(\rho_0,\boldsymbol u_0)$ be compatible radial initial data satisfying the hypotheses of Proposition~\ref{prop:local-restart}, with $\rho_0$ strictly positive, and suppose that $v_0(R)\ge0$.  Then the corresponding maximal positive radial classical solution is global:
	\[
	T^*=\infty.
	\]
	Moreover, for every finite $T_0>0$, there are constants $0<c_{T_0}\le C_{T_0}<\infty$ such that
	\[
	c_{T_0}\le\rho(t,r)\le C_{T_0}
	\qquad
	(0\le t\le T_0,\ 0\le r\le R).
	\]
\end{theorem}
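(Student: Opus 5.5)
The proof is a continuation argument that combines Proposition~\ref{prop:local-restart}, Proposition~\ref{prop:bounded-time} and Theorem~\ref{thm:main}; the remark after Theorem~\ref{thm:main} already outlines it.

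First, Proposition~\ref{prop:local-restart} gives, for the compatible, strictly positive radial data $(\rho_0,\boldsymbol u_0)$, a positive radial classical solution on some interval $[0,T_1)$ with $T_1>0$. We then define the maximal existence time $T^*\in(0,\infty]$ in the class of positive radial classical solutions. Suppose, aiming for a contradiction, that $T^*<\infty$.

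Second, since \eqref{eq:exponent-range} holds, Proposition~\ref{prop:bounded-time} (imported from the radial theory of Huang--Meng--Zhang) gives an upper bound $\sup_{0\le t<T^*}\|\rho(t)\|_{L^\infty}=:\overline\rho_{T^*}<\infty$. This bound depends only on $T^*$, the data and the parameters, so hypothesis \eqref{eq:upper-density} of Theorem~\ref{thm:main} holds with $T=T^*$. The exponent range gives $\alpha>1$, and $\gamma\ge2\alpha-1>\alpha$, so the structural hypotheses also hold. Together with $v_0(R)\ge0$, Theorem~\ref{thm:main} gives $\rho\ge c_{T^*}>0$ on $[0,T^*)\times[0,R]$. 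The continuation criterion of Proposition~\ref{prop:bounded-time} states that two-sided density bounds up to a finite time allow the solution to be extended. The solution therefore extends past $T^*$, which contradicts maximality, and so $T^*=\infty$.

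Third, for the quantitative bounds, fix any finite $T_0$ and restrict the global solution to $[0,T_0]$. Proposition~\ref{prop:bounded-time} on the interval $[0,T_0+1)$ gives $C_{T_0}$. Theorem~\ref{thm:main} requires $T$ to be the maximal time, but its proof only uses the bounds on a finite interval. Either we check that its argument (the boundary ODE \eqref{eq:intro-wall-ode}, the maximum principle and the Dini--Gronwall estimates) works unchanged on any interval $[0,T)$ on which the solution exists with an upper density bound, or we use classical positivity on the compact interval $[0,T_0]$ directly. The latter already gives a positive lower bound $c_{T_0}$, because $\rho$ is continuous and positive on a compact set.

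The only point that needs care is matching hypotheses: the continuation criterion in Proposition~\ref{prop:bounded-time} must be stated in terms of exactly the two-sided density bounds, and not some higher norm. I expect this is how it is stated, since the remark after Theorem~\ref{thm:main} uses it that way.
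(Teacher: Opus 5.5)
Your proposal is correct and follows the paper's proof. Assume $T^*<\infty$, take the upper bound from Proposition~\ref{prop:bounded-time}, get a uniform lower bound from Theorem~\ref{thm:main} with $v_0(R)\ge0$, and contradict the density-loss criterion \eqref{eq:bounded-time-density-loss}, which is just the contrapositive of the continuation statement you invoke. The bounds on $[0,T_0]$ again come from Proposition~\ref{prop:bounded-time} on $[0,T_0+1)$. For the lower bound on $[0,T_0]$, the paper reapplies Theorem~\ref{thm:main} and tacitly uses that its proof does not need maximality. Your compactness argument (continuity and positivity of $\rho$ on $[0,T_0]\times[0,R]$ once $T^*=\infty$) already suffices for the statement as written.
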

\begin{remark}
The significance of this theorem lies in establishing the following fact: for the supercritical case $\alpha>1$, if the initial effective velocity is nonnegative on the boundary, then the three-dimensional compressible Navier–Stokes equations admit a unique global strong solution. This is the first global existence result for strong solutions with arbitrarily large initial data in the multi-dimensional setting. In contrast to Haspot's one-dimensional result~\cite{BurteaHaspot2020}, the higher-dimensional system introduces additional geometric singular terms that pose substantial difficulties. The main difficulty lies in establishing a uniform lower bound for the density, which necessitates handling singular estimates of the velocity at the center of the ball-a singularity that does not arise in the one-dimensional setting. To overcome this, we exploit several novel quantities near the center and employ a refined maximum principle to establish, for the first time, the Lipschitz continuity of the velocity at the center-thereby resolving the singularity of the velocity gradient. Subsequently,  we derive Dini-Gronwall-type inequalities for suitably constructed functions, which close the density lower bound estimate. This argument crucially relies on the non-negativity of the initial effective velocity on the boundary.
\end{remark}

Before turning to the opposite boundary sign, we record a localization estimate which does not impose any sign condition on the effective velocity on the boundary.

\begin{theorem}[Interior density preservation and vacuum localization]
	\label{thm:wall-independent}
	Let $\alpha>1$, $\gamma>\alpha$, and $0<T<\infty$.  Let $(\rho,u)$ be a positive radial classical solution on $[0,T)\times B_R$ satisfying the upper density bound \eqref{eq:upper-density}.  Let $m$ and $M$ be the radial mass coordinate and conserved total mass defined in \eqref{eq:mass-coordinate}.  No sign condition is imposed on $v_0(R)$.  Then there exists a finite constant $C_X$, depending only on $R,\alpha,\gamma,T,\overline\rho_T$ and the initial data, such that
	\begin{equation}
		\label{eq:weighted-reciprocal-bound}
		\frac{(M-m)^{1/\alpha}}{\rho(t,m)}\le C_X
		\qquad
		(0\le t<T,\ 0\le m\le M).
	\end{equation}
	Consequently, for every $\varepsilon\in(0,R)$ there exists $c_{T,\varepsilon}>0$, with the same constant dependence together with $\varepsilon$, such that
	\begin{equation}
		\label{eq:wall-independent-interior-lower}
		\rho(t,r)\ge c_{T,\varepsilon}
		\qquad
		(0\le t<T,\ 0\le r\le R-\varepsilon).
	\end{equation}
	In particular, if $t_j\uparrow T$ and $\rho(t_j,r_j)\to0$, then $r_j\to R$.  If, in addition, $\rho(t,\cdot)\to\rho_*$ uniformly on $[0,R]$ as $t\uparrow T$ and $\rho_*$ has a zero, then
	\begin{equation}
		\label{eq:wall-independent-zero-set}
		\{r\in[0,R]:\rho_*(r)=0\}=\{R\}.
	\end{equation}
\end{theorem}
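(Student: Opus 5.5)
We will work in the mass coordinate $m$ of \eqref{eq:mass-coordinate}, with $M-m=\delta$ the mass between the current point and the wall, so that $\partial_m=\frac{1}{\rho r^2}\partial_r$ up to sign. The effective-velocity identity $v-u=\partial_r\Phi_\alpha(\rho)$ becomes $(\rho^\alpha)_m=(v-u)/r^2$. The effective equation $\Dt v+F_1(\rho)(v-u)=0$ holds along particle paths, which are vertical lines in $m$. The plan has three steps: bound $v$ and $u$ in $L^\infty$, integrate $(\rho^\alpha)_m$ inward from the wall, and fix the center separately.

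\textbf{Step 1: $L^\infty$ bounds on $v$ and $u$.} Write $F_1(\rho)(v-u)=F_1(\rho)v-F_1(\rho)u$. Because $F_1(\rho)\ge0$ and is bounded by $F_1(\overline\rho_T)$, the ODE in $t$ at fixed $m$ gives
\[
|v(t,m)|\le |v_0(m)|+F_1(\overline\rho_T)\int_0^t\|u(s)\|_{L^\infty}\dd s .
\]
So it is enough to bound $\|u\|_{L^\infty}$ on $[0,T)$. We take this from the bounded-time estimates of Section~4, or from Proposition~\ref{prop:bounded-time}. Failing that, we use the BD entropy together with the energy estimate and the upper bound $\overline\rho_T$, giving $\int_0^T\|u\|_{L^\infty}\,dt<\infty$. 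That integrability is all Gronwall needs here. We conclude $\|v\|_{L^\infty}+\|u\|_{L^\infty}\le C$ up to time $T$, with no sign condition on $v_0(R)$.

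\textbf{Step 2: weighted bound near the wall.} Integrate $(\rho^\alpha)_m=(v-u)/r^2$ from $m$ to $M$. The no-slip condition gives $u(t,M)=0$ and $v(t,M)=v_0(R)e^{-\int F_1}$. Away from the center we have $r\ge r_0>0$, and there
\[
\rho^\alpha(t,m)\ \ge\ \rho^\alpha(t,M)-C(M-m)\ \ge\ -C(M-m).
\]
This one-sided bound points the wrong way for a lower bound. To get a lower bound we must use the interior instead: mass conservation guarantees that at each time some point $m_*$ has $\rho(t,m_*)\ge\bar\rho>0$. The increment of $\rho^\alpha$ is Lipschitz in $m$ with constant $C/r^2$, so if $\rho^\alpha(t,m)$ were tiny, the set where $\rho$ is small would have mass at least of order $\bar\rho^\alpha/C$.

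This alone does not give the weighted bound \eqref{eq:weighted-reciprocal-bound}. For that we use the finite-capacity relation $\dd(R-r)=\dd\delta/(\rho r^2)$, i.e. $R-r(t,m)=\int_m^M\frac{\dd m'}{\rho r^2}$, together with the Lipschitz bound on $\rho^\alpha$. Suppose $\rho^\alpha(t,m)=\eta\ll (M-m)$. Then on the interval $m'\in[m-\eta/C,m]$ we have $\rho^\alpha\le2\eta$.

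The cleaner route is a maximum principle for $X:=(M-m)^{1/\alpha}/\rho$. This is the "refined maximum principle for reciprocal density variables" the paper announces. From the continuity equation in mass form, $\Dt(1/\rho)=(r^2u)_m$. We then express $(r^2u)_m$ through the momentum equation in effective form,
\[
\rho^{\alpha}(r^2u)_m=\frac{1}{\alpha}\bigl(\text{effective viscous flux}\bigr)+\frac1\alpha\rho^\gamma+\dots,
\]
so that at an interior maximum of $X$ the viscous term has a favorable sign, and the pressure term is bounded by $\overline\rho_T$. The outcome should be an ODE inequality of the form $\Dt X\le C(1+X)$, or a Dini-type inequality $\Dt X\le C X\,\omega(X)$ with $\int^\infty\dd s/(s\,\omega(s))=\infty$. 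The weight $(M-m)^{1/\alpha}$ is chosen so that it vanishes at the wall, where the sign of $v(R)$ is uncontrolled. It also matches the profile $\rho^\alpha\sim A\delta$, so the boundary contribution is bounded using $|v(t,R)|\le|v_0(R)|$ without any sign.

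\textbf{Step 3: the center, and the consequences.} We expect the main obstacle to be the origin, where $r^{-2}$ and $u/r$ are singular in the mass-coordinate formulas. There we would use the smooth parity $u(t,0)=0$ and bound $u/r$ by the center Lipschitz estimate the paper advertises; that estimate is needed for this theorem as well. The radial Lipschitz estimate of $\rho^\alpha$ near $r=0$ then follows from $\partial_r\Phi_\alpha(\rho)=v-u$ with $v,u$ bounded.

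Once \eqref{eq:weighted-reciprocal-bound} is proved, \eqref{eq:wall-independent-interior-lower} follows directly. For $r\le R-\varepsilon$ we have
\[
M-m\ \ge\ \int_{R-\varepsilon}^R\rho s^2\dd s ,
\]
and this must be bounded below uniformly in time. We argue by contradiction: if that mass tended to $0$, the capacity formula with $\rho\le\overline\rho_T$ would force $\varepsilon$ to be small. The remaining claims, localization of vanishing points and $\{\rho_*=0\}=\{R\}$, are then immediate, because $\rho_*\ge c_{T,\varepsilon}$ on $[0,R-\varepsilon]$ for every $\varepsilon$.
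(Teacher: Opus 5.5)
There is a genuine gap. The weighted maximum principle for $X=(M-m)^{1/\alpha}/\rho$ that you call the ``cleaner route'' is the right idea, but you never control the term that actually drives $z=1/\rho$, and the substitute in Step~1 is neither available nor sufficient.

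Under the hypotheses here (only $\alpha>1$, $\gamma>\alpha$, $\rho\le\overline\rho_T$) there is no $L^\infty$ bound on $u$. Proposition~\ref{prop:bounded-time} requires \eqref{eq:exponent-range}, and even then it yields only $\int\rho|\boldsymbol u|^{2p}$. The energy and BD dissipations are weighted by powers of $\rho$, and these degenerate exactly in the wall region where $\rho$ may become small.

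Even bounded $u,v$ would not suffice. A diffusion term for $z$ appears only after writing $u=v-\partial_r\Phi_\alpha(\rho)$; the effective viscous flux $\alpha\rho^\alpha\,\mathrm{div}\,u-\rho^\gamma$ gives a purely algebraic equation for $z$, so no favorable sign arises at a maximum. The correct decomposition gives \eqref{eq:reciprocal-equation}, whose zeroth-order source is $z\,\mathrm{div}\,v=\mathcal Y-F_2(\rho)+2(v/r)z$ with $\mathcal Y=v_r/\rho+F_2(\rho)$. This involves $v_r/\rho$, which bounds on $u,v$ do not touch.

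Two ingredients are missing:
\begin{enumerate}
\item[(i)] One-sided bounds $(u/r)_+,(v/r)_+\le C_T$, together with $\int_0^t(u/r)_-\,ds\le C_T$ along every particle path. These come from a joint maximum principle for $u/m^{1/3}$ and $v/m^{1/3}$ that uses only $u(t,R)=0$ and $\rho\le\overline\rho_T$.
\item[(ii)] A pathwise upper bound for $\mathcal Y$ from its transport equation \eqref{eq:Y-equation}. Its integrating factor is controlled by $(u/r)_+$. Its positive sources are at most $C\rho^{\gamma-\alpha-1}$, $C(1+\log(1+1/\rho)+(u/r)_-)$, or $C(1+(u/r)_-)$, according as $\gamma-\alpha<1$, $=1$, or $>1$.
\end{enumerate}
With these in hand, at an interior maximum of $X=\delta^{1/\alpha}z$ the $z_{mm}$ and $z_m^2$ terms cancel exactly; this cancellation is what fixes the exponent $1/\alpha$. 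The leftover first-order term is $4rX^{1-\alpha}\le4R$ once $X\ge1$. Multiplying the pathwise $\mathcal Y$ bound by $\delta^{1/\alpha}$ then gives, for $0<\gamma-\alpha\le1$, a Volterra inequality $D^+H\le C[1+H+\int_0^t(1+H)\,ds]$ for $H=\max\{1,\max_m X\}$. This is not the local ODE you anticipate, and it is closed by Dini--Gronwall applied to $1+H+\int_0^t(1+H)\,ds$.

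Your derivation of \eqref{eq:wall-independent-interior-lower} also runs in the wrong direction. From $R-r=\int_m^M dm'/(\rho r^2)$, the bound $\rho\le\overline\rho_T$ only gives $R-r\ge(M-m)/(\overline\rho_T R^2)$. A shell $[R-\varepsilon,R]$ of very small density has very small mass and violates no upper bound, so no contradiction arises. What you need is an upper bound on $1/\rho$ near the wall, which is \eqref{eq:weighted-reciprocal-bound} itself:
\[
R^3-r(t,\delta)^3=3\int_0^\delta z(t,s)\,ds\le 3C_X\int_0^\delta s^{-1/\alpha}\,ds=\frac{3\alpha C_X}{\alpha-1}\,\delta^{(\alpha-1)/\alpha}.
\]
This is finite precisely because $\alpha>1$. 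Hence $r\le R-\varepsilon$ forces $\delta\ge\delta_\varepsilon>0$, and then $\rho\ge\delta_\varepsilon^{1/\alpha}/C_X$.

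Finally, at the center you neither need nor have a two-sided bound on $u/r$. Only the one-sided bounds above enter, and a center maximum of $X$ is handled by parity: $z_r(t,0)=0$ and $z_{rr}(t,0)\le0$.
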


\begin{remark}
Under the hypotheses of Theorem~\ref{thm:wall-independent}, the uniform upper bound for the density prevents finite-time vacuum formation in the interior of the ball, including at the center. Indeed, the density remains uniformly bounded away from zero on every fixed smaller ball $B_{R-\varepsilon}$ throughout $[0,T)$, regardless of the sign of the initial boundary effective velocity $v_0(R)$. Consequently, any sequence of points along which the density tends to zero as $t\uparrow T$ must approach the outer boundary $\partial B_R$. If a uniform terminal density profile exists and contains vacuum, its vacuum set is exactly $\partial B_R$. The theorem therefore identifies the only possible location of finite-time vacuum formation.
\end{remark}

The second result accounts for the opposite direction. For large data, we can construct a family of initial data that blow up on the boundary in finite time with negative boundary trace.

\begin{theorem}[Finite time boundary vacuum for data with negative boundary trace]
	\label{thm:bad-wall}
	Assume \eqref{eq:exponent-range}.  Then there exist $k_0,C>0$ such that, for every $0<k<k_0$, one can choose smooth compatible radial initial data with strictly positive density and
	\[
	v_{0,k}(R)<0.
	\]
	The corresponding maximal positive classical solution exists on $[0,T_k^*)$, where
	\[
	0<T_k^*\le Ck^{(\alpha-1)/\alpha}<\infty.
	\]
	Moreover, there is a nonnegative, nontrivial terminal profile $\rho_{k,*}$ such that
	\[
	\rho_k(t)\longrightarrow\rho_{k,*}
	\quad\text{uniformly on }\overline{B_R}
	\quad\text{as }t\uparrow T_k^*,
	\]
	and
	\[
	\{r\in[0,R]:\rho_{k,*}(r)=0\}=\{R\}.
	\]
\end{theorem}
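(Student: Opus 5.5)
The plan is to construct data with negative boundary trace whose boundary density is forced to zero in time of order $k^{(\alpha-1)/\alpha}$, and then to read off the terminal profile from Theorem~\ref{thm:wall-independent}.

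\textbf{Choice of data and mass-coordinate formulation.} Work in the mass coordinate $\delta=M-m$ measured from the wall, where the effective velocity equation \eqref{eq:intro-effective-equation} becomes an ODE along particle paths. Fix a reference interior profile and glue it to a thin boundary collar in which $\rho_{0,k}^\alpha(\delta)\approx k^{\alpha}+A\delta$, with a fixed slope $A>0$. This gives $\rho_{0,k}(R)=k$ and, by $(\rho^\alpha)_\delta=-(v-u)/r^2$ with $u(R)=0$, $v_{0,k}(R)=-AR^2<0$. The velocity is taken with $u_0$ compatible, for example $u_0\equiv0$ near the wall, smoothed so that the hypotheses of Proposition~\ref{prop:local-restart} hold. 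Choosing $u_0$ makes $v_0$ essentially $\partial_r\Phi_\alpha(\rho_0)$, which is negative near the wall.

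\textbf{Upper bound on the lifespan.} On the wall, \eqref{eq:intro-wall-ode} gives $v(t,R)=v_0(R)e^{-\int_0^tF_1}$. Since $\rho\le\overline\rho$ by Proposition~\ref{prop:bounded-time}, $F_1$ is bounded on bounded time intervals, so $v(t,R)\le -c<0$ for $t\le 1$. Next use the continuity equation on the wall, $\partial_t\rho(t,R)=-\rho u_r(t,R)$. Express $u_r$ at the wall through the momentum equation, or via the effective flux, as $\rho^{\gamma-\alpha}$ minus $\rho^{-\alpha}\times$(effective pressure) terms. The aim is a differential inequality of the form
\[
\frac{\dd}{\dd t}\rho(t,R)^{\alpha-1}\le -c\,\frac{|v(t,R)|}{R^2}+C\rho(t,R)^{\gamma-1},
\]
and I would attempt this by computing $D_t(\rho^\alpha)_\delta$ at $\delta=0$.

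Alternatively, I would use the finite-capacity argument from the introduction. The wall collar carries mass $\delta$ with thickness $\int_0^\delta d s/(\rho r^2)$. Keeping the slope $A(t)=-v(t,R)/R^2\ge c$ and using a comparison/maximum principle on $(\rho^\alpha)_\delta$ near $\delta=0$ should show that $\rho(t,R)$ reaches zero by time $Ck^{(\alpha-1)/\alpha}$. The exponent enters because $\rho^{\alpha-1}(R)=k^{\alpha-1}$ and the collar mass scale is $k^\alpha$. If the solution existed beyond $Ck^{(\alpha-1)/\alpha}$ as a positive classical solution, integrating this inequality would give $\rho(t,R)^{\alpha-1}<0$, which is a contradiction. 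Hence $T_k^*\le Ck^{(\alpha-1)/\alpha}$. In particular $T_k^*<\infty$, and by the continuation criterion of Proposition~\ref{prop:bounded-time} together with the upper bound, $\inf\rho\to0$.

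\textbf{Terminal profile.} Once $T_k^*<\infty$ is known, Theorem~\ref{thm:wall-independent} applies on $[0,T_k^*)$ and gives the lower bound $\rho\ge c_\varepsilon$ on $r\le R-\varepsilon$. Uniform convergence needs equicontinuity in $(t,r)$. In $r$, use a uniform Hölder bound: $\rho^{\alpha}$ is Lipschitz in $\delta$ because $v$ and $u$ are bounded (from the bounded-time estimates and the maximum principle for $v$), combined with $\rho\le\overline\rho$. In $t$, use $\partial_t\rho^\alpha$ controlled in a negative norm. Arzelà--Ascoli plus uniqueness of limits then gives $\rho_k(t)\to\rho_{k,*}$ uniformly. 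The limit is nontrivial because mass is conserved and $\rho$ is bounded below away from the wall. The profile vanishes somewhere because the solution cannot be continued, and \eqref{eq:wall-independent-zero-set} then localizes the zero set to $\{R\}$.

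\textbf{Main obstacle.} The hard step is the quantitative wall-collapse estimate. It requires showing that $u_r(t,R)$ stays positive and of size comparable to $\rho(t,R)^{-\alpha}|v(t,R)|$-type terms, uniformly in $k$, so that the gradient collar does not spread out before the boundary density reaches zero. I would first try a barrier built on $(\rho^\alpha)_\delta\ge A/2$ on $\delta\le k^\alpha$, propagated through the effective equation for $v$. The restriction $\gamma<3\alpha-1$ presumably enters here, to keep the pressure term $\rho^{\gamma-\alpha}$ subordinate.
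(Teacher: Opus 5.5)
\textbf{There is a genuine gap: the lifespan bound.} The substance of the theorem is $T_k^*\le Ck^{(\alpha-1)/\alpha}$, and neither of your routes delivers it.

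Your wall inequality cannot be derived, and it is in fact false. At the wall $\partial_t\rho(t,R)=-\rho\,u_r(t,R)$, and $u_r(t,R)$ is a nonlocal quantity. Evaluating the momentum equation at $r=R$ only ties $d_r(R)$ to $d(R)$ and $\rho_r(R)$. The trace $v(t,R)=\alpha\rho^{\alpha-2}\rho_r(t,R)$ fixes the density slope and says nothing about $u_r(t,R)$.

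\begin{itemize}
\item For your own data $u_{0,r}(R)=0$, so $\partial_t\rho(0,R)=0$. Your right-hand side equals $-cA+Ck^{\gamma-1}<0$ for small $k$, so the inequality fails at $t=0$. The paper's successful data also have $u_{0,r}(R)=0$, since $F_0(0)=(F_0)_\zeta(0)=0$. So collapse is not a local wall effect.
\item With the same $\rho_0$ but $u_{0,r}(R)<0$, the wall density even increases initially, although $v_0(R)<0$.
\end{itemize}

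The sign of $v(t,R)$ only says that the wall is a local minimum of $\rho^\alpha$ in the mass variable. That is equally consistent with the wall density rising. This is why Theorem~\ref{thm:bad-wall} is a construction and not a statement about all data with $v_0(R)<0$.

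\textbf{The finite-capacity route needs a driver, which your data do not contain.} The paper works in the scaling $t=k^\beta\theta$, $\delta=k\zeta$, $W=k^{-1}\rho^\alpha$, $F=r^2u$, and the argument has two halves.

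\begin{itemize}
\item It uses the exact identity $\frac{\dd}{\dd\theta}\int_0^{K}W^{-1/\alpha}\dd\zeta=-F(\theta,K)$. The rescaled volume of the wall collar grows at the rate of the inward flux through the label $K$.
\item The slope bound $W_\zeta=(F-H)/\tilde r^4\ge a_0-Ck^\beta$ on $[0,2K_k]$ caps the possible increase of that volume by $D_\infty+1$. It holds because $H=r^2v$ is frozen up to $O(k^\beta)$ by the short-time bound of Proposition~\ref{prop:normalized}. This is roughly your maximum principle for $(\rho^\alpha)_\delta$, and it is the half you have.
\item The contradiction also needs the half you are missing: $-F(\theta,K_k)\ge -J/2$ must persist up to $\theta=\Theta_{\rm b}$.
\end{itemize}

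This is why the data of Lemma~\ref{lem:data} carry a negative flux plateau $r^2u_0=J<0$ on a collar of fixed mass. It is also why a $\sin^2$ barrier is placed on the expanding interval $[K_k/2,3K_k/2]$, $K_k=k^{-\chi}$, to keep the degenerate diffusion $\alpha\tilde r^4(W^\nu F_\zeta)_\zeta$ from dissipating this flux.

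With $u_0\equiv0$ near the wall there is no such flux, and nothing in your argument creates one. Moreover, since $\rho$ increases inward, the pressure gradient pushes fluid toward the wall rather than away from it.

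\textbf{Smaller points.}
\begin{itemize}
\item $u_0\equiv0$ near the wall violates the compatibility condition $u_{0,rr}(R)=\frac\gamma\alpha\rho_0^{\gamma-\alpha-1}\rho_{0,r}(R)\ne0$. The paper's correction $E_k$ repairs this.
\item The condition $\gamma<3\alpha-1$ enters only through the uniform density upper bound (Step~3 of Proposition~\ref{prop:bounded-time}), not through any wall estimate.
\end{itemize}

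\textbf{Your terminal-profile plan is sound once $T_k^*\lesssim k^\beta$ is known.} It is a legitimate alternative to the paper's Morrey and complement/collar argument.
\begin{itemize}
\item The identity $(\rho^\alpha)_r=\rho(v-u)$, with $|u|+|v|\le Cr$, gives a uniform Lipschitz bound. The pointwise bound on $u,v$ must come from Proposition~\ref{prop:normalized}, not from Proposition~\ref{prop:bounded-time}, whose velocity bounds are only integral.
\item The negative-norm time bound identifies the limit.
\item Theorem~\ref{thm:wall-independent}, together with the density-loss criterion, gives the zero set $\{R\}$.
\end{itemize}
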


Note there we do not take limit in $k$, $k$ is just an index indicating we have a family of initial data.

\paragraph{Proof architecture.}
The two arguments use the same boundary trace but otherwise have distinct
structures.
\begin{enumerate}[label=\textup{(\roman*)},leftmargin=2.2em]
	\item In the good boundary case, the transverse strain rate functions estimates of Proposition~\ref{prop:angular-bounds} control the effective pressure in Proposition~\ref{prop:Y-bounds}, which closes the maximum inequality for reciprocal density of Proposition~\ref{prop:Q-inequality} by a Dini--Gronwall argument. Without using the boundary sign, their pathwise form also closes the critical weighted reciprocal estimate in Theorem~\ref{thm:wall-independent}.
	\item In the bad boundary case, Proposition~\ref{prop:bounded-time} supplies an upper bound for the density and a criterion for density loss.  The compatible data of Lemma~\ref{lem:data} create a negative flux plateau.  The scaled maximum principle and argument on an observation collar preserve enough of this flux to force the contradiction from finite capacity in Proposition~\ref{prop:finite-capacity}. Proposition~\ref{prop:terminal-wall} then constructs and localizes the terminal density trace.
\end{enumerate}

\begin{remark}[Scope of the sign mechanism]
	Theorem~\ref{thm:main} is conditional on a density upper bound on the bounded time interval under consideration, and its constant $c_T$ need not be uniform as $T\to\infty$. Corollary~\ref{cor:global-good-wall} becomes unconditional only because that upper bound is available in \eqref{eq:exponent-range}.  Conversely, Theorem~\ref{thm:bad-wall} constructs a particular family with negative boundary trace; it does not assert that every datum with $v_0(R)<0$ develops vacuum.
\end{remark}

\section{Preliminaries}
This section records the radial mass coordinates, the effective and transverse strain rate functions, two elementary Dini lemmas, and the local existence and continuation principle under positive density used later.

\subsection{Radial mass coordinates}

The outward radial mass coordinate and total mass are defined as
\begin{equation}
	\label{eq:mass-coordinate}
	m(t,r):=\int_0^r\rho(t,s)s^2\dd s,
	\qquad
	M:=m(t,R).
\end{equation}
The no-slip condition and the continuity equation make $M$ independent of
time.  Since $\rho>0$, the map $r\mapsto m(t,r)$ is strictly increasing for each $t<T$; let $r=r(t,m)$ be its inverse.  Differentiating $m(t,r(t,m))=m$ at fixed $m$ gives
\begin{equation}
	\label{eq:mass-flow}
	r_t=u.
\end{equation}
The continuity equation also gives
\begin{equation}
	m_r=\rho r^2,
	\qquad
	m_t=-\rho u r^2.
\end{equation}

We also use the inward mass coordinate, which is useful when we study the behavior of solution near the boundary.
\begin{equation}
	\delta(t,r):=M-m(t,r).
\end{equation}
At fixed $\delta$,
\begin{equation}
	r_\delta=-\rho^{-1}r^{-2},
	\qquad
	r_t=u.
\end{equation}
Thus $m=0$ and $\delta=M$ represent the center, while $m=M$ and $\delta=0$ represent the boundary.  Differentiation at fixed $m$ or fixed $\delta$ is material differentiation.

\subsection{Effective and transverse strain rate functions}

Define the effective velocity by
\begin{equation}
	\label{eq:def-v}
	v=u+\alpha\rho^{\alpha-2}\rho_r.
\end{equation}
It satisfies
\begin{equation}
	D_t v +F_1(\rho)(v-u) = 0.
\end{equation}
where
\begin{equation}
	\label{eq:F1-F2}
	F_1(\rho):=\frac{\gamma}{\alpha}\rho^{\gamma-\alpha},
	\qquad
	F_2(\rho):=
	\begin{cases}
		\displaystyle
		\frac{\gamma}{\alpha(\gamma-\alpha-1)}
		\rho^{\gamma-\alpha-1},&\gamma-\alpha\ne1,\\[6pt]
		\displaystyle \frac{\gamma}{\alpha}\log\rho,&\gamma-\alpha=1.
	\end{cases}
\end{equation}

The starting point for the following transverse strain rate functions we are about to introduce is the behavior at the center.  Smooth radial symmetry forces
\[
	u(t,0)=v(t,0)=0,
\]
so the values of the two velocities at the center contain no information about their first nontrivial behavior.  Their radial expansions are
\[
	u(t,r)=u_r(t,0)r+O(r^3),
	\qquad
	v(t,r)=v_r(t,0)r+O(r^3).
\]
Thus the relevant center quantities are the slopes $u_r(t,0)$ and $v_r(t,0)$. Since
\[
	u_r(t,0)=\lim_{r\to0}\frac{u(t,r)}r,
	\qquad
	v_r(t,0)=\lim_{r\to0}\frac{v(t,r)}r,
\]
we are led to extend these center slopes to the whole ball by treating
\[
	\frac ur,
	\qquad
	\frac vr
\]
as new unknowns.  These quotients are smooth at $r=0$ and retain the information that is lost in the identically vanishing center values of $u$ and $v$.

Define the transverse strain rate functions of physical and effective velocities by
\begin{equation}
	\U=\frac ur,
	\qquad
	\V=\frac vr.
\end{equation}
Their values at $r=0$ are understood through smooth radial limits as we just discussed. We also write
\begin{equation}
	\label{eq:angular-divergences}
	d:=\operatorname{div} u = u_r+2\frac ur,
	\qquad
	\omega:=\operatorname{div} v = v_r+2\frac vr,
\end{equation}
which is just an abbreviate name for the divergence of each velocity variable. We end this subsection by noting that these transverse strain rate functions satisfy the following equations, which themselves can be checked by direct calculation. 
\begin{align}
	\Dt\U
	&=\alpha\rho^{\alpha-1}
	\left(\U_{rr}+\frac{4}{r}\U_r\right)\notag\\[-2pt]
	&\quad+\left(\V-\U\right)
	\left[\alpha r\left(\U\right)_r
	+(3\alpha-2)\U-F_1(\rho)\right]-\U^2
	\label{eq:normalized-u-equation}\raisetag{0.5\baselineskip}\\[3pt]
	\Dt\V
	&=-\U\V-F_1(\rho)(\V-\U).
	\label{eq:angular-V}
\end{align}

\subsection{Dini maximum tools}

\begin{lemma}[Dini derivative of a spatial maximum]
	\label{lem:Dini}
	Let $K$ be a compact metric space.  Suppose $f:[0,t_1]\times K\to\mathbb R$ is continuous, $f_t$ exists, and $f_t$ is continuous.  Define
	\[
	M_f(t):=\max_{x\in K}f(t,x).
	\]
	Then
	\begin{equation}
		\label{eq:Dini-max}
		D^+M_f(t)
		\le\max_{x\in\operatorname{argmax}f(t,\cdot)}f_t(t,x),
	\end{equation}
	where $D^+$ denotes the upper right Dini derivative.
\end{lemma}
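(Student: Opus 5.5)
Fix $t\in[0,t_1)$ and write $M_f(t)=\max_K f(t,\cdot)$. I will prove \eqref{eq:Dini-max} by taking a sequence $h_n\downarrow0$ that realizes the upper right Dini derivative, picking maximizers at the times $t+h_n$, and using compactness together with a mean value argument. Choose $h_n\downarrow0$ with
\[
\frac{M_f(t+h_n)-M_f(t)}{h_n}\longrightarrow D^+M_f(t)\in[-\infty,\infty].
\]
For each $n$ pick $x_n\in K$ with $f(t+h_n,x_n)=M_f(t+h_n)$, which is possible because $K$ is compact and $f$ is continuous. Passing to a subsequence, $x_n\to x_*$ for some $x_*\in K$.

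The first step is to check that $x_*\in\operatorname{argmax}f(t,\cdot)$. Since $f$ is continuous on $[0,t_1]\times K$, it is uniformly continuous there, so $M_f$ is continuous. Hence
\[
f(t,x_*)=\lim_n f(t+h_n,x_n)=\lim_n M_f(t+h_n)=M_f(t).
\]

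The second step bounds the difference quotient using $M_f(t)\ge f(t,x_n)$ and the mean value theorem in $t$ at fixed $x_n$:
\[
\frac{M_f(t+h_n)-M_f(t)}{h_n}\le\frac{f(t+h_n,x_n)-f(t,x_n)}{h_n}=f_t(t+\theta_nh_n,x_n),\qquad\theta_n\in(0,1).
\]
Since $f_t$ is continuous on the compact set $[0,t_1]\times K$, the right-hand side converges to $f_t(t,x_*)$. This gives $D^+M_f(t)\le f_t(t,x_*)$. The right-hand side of \eqref{eq:Dini-max} is attained because $\operatorname{argmax}f(t,\cdot)$ is compact and $f_t(t,\cdot)$ is continuous, so $f_t(t,x_*)\le\max_{x\in\operatorname{argmax}f(t,\cdot)}f_t(t,x)$, which proves the lemma. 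The mean value step also rules out $D^+M_f(t)=+\infty$, since the quotients are bounded above by $\sup|f_t|$.

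The only delicate point is that a maximizer at $t+h_n$ need not be a maximizer at $t$, so the estimate cannot be made at one fixed point. It is essential that the limit point $x_*$ is a maximizer at time $t$, and that $f_t$ is evaluated along the moving points $(t+\theta_nh_n,x_n)$. Joint continuity of $f_t$ is used exactly there, rather than mere existence of $f_t$.
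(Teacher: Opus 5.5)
Your proof is correct. The paper does not prove this lemma at all: it states it as a standard tool (the one-sided half of Danskin's theorem, often called Hamilton's trick), and only Lemma~\ref{lem:Dini-Gronwall} gets a proof, so there is no argument in the paper to compare with.

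Your route is the standard one, and every step is justified:
\begin{itemize}
\item take a sequence $h_n\downarrow0$ realizing the $\limsup$;
\item choose maximizers $x_n$ at times $t+h_n$ and extract a subsequence $x_n\to x_*$;
\item show $x_*\in\operatorname{argmax}f(t,\cdot)$;
\item bound the difference quotient by $f_t(t+\theta_nh_n,x_n)$ via the mean value theorem, and pass to the limit using joint continuity of $f_t$.
\end{itemize}
You also handled the cases $D^+M_f(t)=\pm\infty$ and the attainment of the maximum over the compact, nonempty set $\operatorname{argmax}f(t,\cdot)$.

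Two optional refinements. First, identifying $x_*$ as a maximizer does not need continuity of $M_f$. It follows directly by letting $n\to\infty$ in $f(t+h_n,x_n)\ge f(t+h_n,y)$ for each fixed $y\in K$.

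Second, you get a sharper statement almost for free. For any $x\in\operatorname{argmax}f(t,\cdot)$ one has $M_f(t+h)-M_f(t)\ge f(t+h,x)-f(t,x)$. Hence the lower right Dini derivative is at least $\max_{x\in\operatorname{argmax}f(t,\cdot)}f_t(t,x)$. Combined with your upper bound, $M_f$ is right differentiable at every $t<t_1$, and its right derivative equals that maximum.
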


\begin{lemma}[Dini--Gronwall inequality]
	\label{lem:Dini-Gronwall}
	Let $g:[0,t_1]\to[0,\infty)$ be continuous.  If
	\[
	D^+g(t)\le Cg(t)
	\qquad(0\le t<t_1),
	\]
	then
	\[
	g(t)\le g(0)e^{Ct}
	\qquad(0\le t\le t_1).
	\]
\end{lemma}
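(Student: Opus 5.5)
The statement to prove is the Dini--Gronwall inequality (Lemma~\ref{lem:Dini-Gronwall}). I would reduce it to a monotonicity statement for an exponentially weighted function. Set
\[
h(t):=e^{-Ct}g(t),
\]
which is continuous on $[0,t_1]$. Since $e^{-Ct}$ is differentiable, the product rule for upper right Dini derivatives (one factor differentiable and positive) gives
\[
D^+h(t)=e^{-Ct}\bigl(D^+g(t)-Cg(t)\bigr)\le0
\qquad(0\le t<t_1).
\]
It therefore suffices to show that a continuous function with $D^+h\le0$ on $[0,t_1)$ is nonincreasing. Then $h(t)\le h(0)=g(0)$, which is the claim. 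Nonnegativity of $g$ is not really needed; it only matches the intended use.

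The main step is this monotonicity lemma, which I would prove with an $\varepsilon$-perturbation. Fix $\varepsilon>0$ and let $h_\varepsilon(t):=h(t)-\varepsilon t$, so that $D^+h_\varepsilon\le-\varepsilon<0$. Suppose $h_\varepsilon(b)>h_\varepsilon(a)$ for some $a<b$. Let
\[
s:=\sup\{t\in[a,b]: h_\varepsilon(t)\le h_\varepsilon(a)\}.
\]
By continuity $h_\varepsilon(s)\le h_\varepsilon(a)$, so $s<b$, and $h_\varepsilon(t)>h_\varepsilon(a)\ge h_\varepsilon(s)$ for $t\in(s,b]$. Hence
\[
\frac{h_\varepsilon(t)-h_\varepsilon(s)}{t-s}>0 \quad\text{for } t\downarrow s,
\]
which forces $D^+h_\varepsilon(s)\ge0$. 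This contradicts $D^+h_\varepsilon(s)\le-\varepsilon$. So $h_\varepsilon$ is nonincreasing, and letting $\varepsilon\to0$ shows $h$ is nonincreasing.

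The only delicate point, though it is routine, is the Dini product rule. I would verify it directly:
\[
\frac{h(t+\tau)-h(t)}{\tau}
=e^{-C(t+\tau)}\,\frac{g(t+\tau)-g(t)}{\tau}
+g(t)\,\frac{e^{-C(t+\tau)}-e^{-Ct}}{\tau}.
\]
Take $\limsup_{\tau\downarrow0}$. The second term converges to $-Ce^{-Ct}g(t)$, and the positive factor $e^{-C(t+\tau)}\to e^{-Ct}$ passes through the $\limsup$. This gives $D^+h(t)=e^{-Ct}\bigl(D^+g(t)-Cg(t)\bigr)$, including the case $D^+g(t)=-\infty$, where both sides equal $-\infty$ and the inequality $D^+h\le0$ still holds.
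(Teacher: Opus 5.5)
Your proof is correct and follows the paper's argument. The paper also sets $h(t)=e^{-Ct}g(t)$, uses the product rule for upper right Dini derivatives to get $D^+h\le0$, and concludes with the Dini monotonicity criterion. You additionally verify that product rule and prove the monotonicity criterion, which the paper only cites; your $\varepsilon$-perturbation is genuinely needed there, since $D^+h(s)\le 0$ alone does not contradict $D^+h(s)\ge0$.
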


\begin{proof}
	Set $h(t):=e^{-Ct}g(t)$.  The product rule for upper right Dini
	derivatives gives $D^+h(t)\le0$.  The Dini monotonicity criterion shows
	that $h$ is nonincreasing, and hence $g(t)\le g(0)e^{Ct}$.
\end{proof}

\subsection{Local theory and continuation under positive density}

We use the standard local theory under positive density in the following form.

For a positive radial pair $(\rho,\boldsymbol u)\in H^3(B_R)\times
(H^3(B_R)\cap H_0^1(B_R))$, define the equation acceleration
\begin{equation}
	\label{eq:equation-acceleration}
	\begin{aligned}
		\mathcal G_{\alpha,\gamma}(\rho,\boldsymbol u)
		:={}&\rho^{-1}\left[
		\divg(\rho^\alpha\nabla\boldsymbol u)
		+\nabla\bigl((\alpha-1)\rho^\alpha\divg\boldsymbol u\bigr)
		-\nabla\rho^\gamma
		\right]\\
		&-\boldsymbol u\mathbin{\cdot}\nabla\boldsymbol u.
	\end{aligned}
\end{equation}
For a radial vector field, $\nabla\boldsymbol u$ is symmetric and hence
$\mathbb D\boldsymbol u=\nabla\boldsymbol u$, so \eqref{eq:equation-acceleration} is the acceleration associated with \eqref{eq:ns-system} in the radial class.

\begin{proposition}[Compatible local existence and continuation under positive density]
	\label{prop:local-restart}
	Let $\alpha>1$ and $\gamma>1$.  Suppose $\rho_0$ and
	$\boldsymbol u_0$ are radial and
	\[
		0<c_0\le\rho_0\le C_0,
		\qquad
		\rho_0\in H^3(B_R),
		\qquad
		\boldsymbol u_0\in H^3(B_R)\cap H_0^1(B_R),
	\]
	and
	\[
		\mathcal G_{\alpha,\gamma}(\rho_0,\boldsymbol u_0)
		\in H_0^1(B_R).
	\]
	Then there is a time $T_0>0$, depending only on
	\[
		R,\alpha,\gamma,c_0^{-1},C_0,
		\norm{\rho_0}_{H^3},
		\norm{\boldsymbol u_0}_{H^3},
		\norm{\mathcal G_{\alpha,\gamma}(\rho_0,\boldsymbol u_0)}_{H^1},
	\]
	and a unique positive radial solution on $[0,T_0]$ satisfying
	\begin{align}
		\rho&\in C([0,T_0];H^3),
		&\rho_t&\in C([0,T_0];H^2),
	\label{eq:local-rho-class}\\
		\boldsymbol u&\in C([0,T_0];H^3\cap H_0^1)
		\cap L^2(0,T_0;H^4),
	\notag\\
		\boldsymbol u_t&\in L^\infty(0,T_0;H_0^1)
		\cap L^2(0,T_0;H^2),
		&\boldsymbol u_{tt}&\in L^2(0,T_0;L^2),
	\label{eq:local-u-class}
	\end{align}
	with $c_0/2\le\rho\le2C_0$.

	The same assertion applies at a translated initial time.  In particular, if a positive solution in the class \eqref{eq:local-rho-class}--\eqref{eq:local-u-class} has a strong terminal limit in $H^3\times H^3$ whose density remains positive, then the terminal pair automatically satisfies the same compatibility condition and the positive classical branch extends beyond that time.
\end{proposition}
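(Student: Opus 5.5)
The plan is to reduce the statement to the classical local theory for compressible Navier--Stokes with viscosities bounded away from zero. We freeze the viscosity at a positive density and run a linearization--iteration scheme in the energy class \eqref{eq:local-rho-class}--\eqref{eq:local-u-class}.

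Since $c_0\le\rho_0\le C_0$, on any short interval where $c_0/2\le\rho\le 2C_0$ the coefficients $\rho^{\alpha-1}$ and $(\alpha-1)\rho^{\alpha-1}$ are smooth and uniformly positive functions of $\rho$. The operator $\rho^{-1}\bigl[\divg(\rho^\alpha\nabla\boldsymbol u)+\nabla((\alpha-1)\rho^\alpha\divg\boldsymbol u)\bigr]$ is then uniformly elliptic (Lamé type, since $\alpha>0$ and $2\alpha-1+\ldots>0$ in the radial setting), and $\alpha>1$ is only used to keep these powers smooth.

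The iteration runs as follows. Given $(\tilde\rho,\tilde{\boldsymbol u})$ in the class, we first solve the linear transport equation $\rho_t+\tilde{\boldsymbol u}\cdot\nabla\rho+\rho\divg\tilde{\boldsymbol u}=0$. Because $\tilde{\boldsymbol u}=0$ on $\partial B_R$, there is no inflow boundary, and the characteristics give $H^3$ bounds and the two-sided density bounds for short time. We then solve the linear parabolic system
\[
\rho\boldsymbol u_t-\divg(\rho^\alpha\nabla\boldsymbol u)-\nabla((\alpha-1)\rho^\alpha\divg\boldsymbol u)=-\rho\tilde{\boldsymbol u}\cdot\nabla\tilde{\boldsymbol u}-\nabla\rho^\gamma
\]
with Dirichlet data by Galerkin approximation, using energy estimates for $\boldsymbol u$, $\boldsymbol u_t$ and $\boldsymbol u_{tt}$.

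The compatibility hypothesis $\mathcal G_{\alpha,\gamma}(\rho_0,\boldsymbol u_0)\in H_0^1$ is exactly what gives $\boldsymbol u_t(0)\in H_0^1$. This is needed to close the $L^\infty_tH^1_0$ bound for $\boldsymbol u_t$ and the $L^2_tL^2$ bound for $\boldsymbol u_{tt}$. Elliptic regularity for the Lamé operator then upgrades these to $\boldsymbol u\in L^\infty H^3\cap L^2H^4$. Uniform bounds on $[0,T_0]$, with $T_0$ depending only on the listed quantities, follow by choosing $T_0$ small. Contraction is proved in the lower norm $L^\infty L^2\times(L^\infty L^2\cap L^2H^1)$, and the limit inherits the high-norm bounds by weak compactness. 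Time continuity in $H^3$ is obtained by the standard Bona--Smith/mollification argument for the transport part together with the parabolic structure.

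Uniqueness in the class follows from the same low-norm difference estimate. Radial symmetry is preserved because rotations map solutions to solutions, so uniqueness forces the solution to be radial.

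For the continuation statement, apply the result at the translated initial time $t_1$. Suppose the solution has a strong limit $(\rho(t_1),\boldsymbol u(t_1))$ in $H^3\times H^3$ with positive density. The compatibility condition is checked from the equation itself: on the lifespan, $\boldsymbol u_t=\mathcal G_{\alpha,\gamma}(\rho,\boldsymbol u)$, and $\boldsymbol u_t\in L^\infty H_0^1$ with $\boldsymbol u_{tt}\in L^2L^2$. Hence $\boldsymbol u_t$ is weakly continuous into $H_0^1$, and its terminal value, which equals $\mathcal G_{\alpha,\gamma}(\rho(t_1),\boldsymbol u(t_1))$ by continuity of $\mathcal G$ from $H^3\times H^3$ into $H^1$, lies in the closed subspace $H_0^1$. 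Restarting at $t_1$ and gluing by uniqueness then extends the branch.

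I expect the main obstacle to be the higher-order estimate closing $\boldsymbol u\in L^2H^4$ and $\rho\in C H^3$ simultaneously, because $\nabla\rho^\gamma$ and the viscous coefficient gradients $\nabla\rho^\alpha$ lose one derivative relative to $\boldsymbol u$. I would handle this by the usual pairing: $\rho\in H^3$ is transported by $\boldsymbol u\in L^2H^4\subset L^1W^{3,\infty}$ locally, and the time integrability compensates for the loss over a short interval.
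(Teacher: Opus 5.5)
The paper gives no proof of this proposition; it simply invokes the standard local theory under positive density. Your linearization--iteration sketch is essentially that standard argument: compatibility gives $\boldsymbol u_t(0)=\mathcal G_{\alpha,\gamma}(\rho_0,\boldsymbol u_0)\in H_0^1$, you prove low-norm contraction and uniqueness, get radial symmetry from rotation invariance, and restart via continuity of $\mathcal G_{\alpha,\gamma}:H^3\times H^3\to H^1$ together with closedness of $H_0^1$. So it is consistent with what the paper relies on, with one harmless slip: in three dimensions $H^4\not\subset W^{3,\infty}$, but the $H^3$ energy estimate for the transported density only needs $\int_0^{T_0}\bigl(\norm{\boldsymbol u}_{H^4}+\norm{\nabla\boldsymbol u}_{W^{1,\infty}}\bigr)\dd t$, which $\boldsymbol u\in L^2H^4$ controls since $H^4\subset W^{2,\infty}$.
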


\section{A priori estimates on bounded time intervals}
\label{sec:bounded-time}

This section records the estimates available in the exponent range
\eqref{eq:exponent-range}.  They have two roles: they turn the conditional
lower bound in the good boundary case into Corollary~\ref{cor:global-good-wall}, and they supply the upper bound for the density and endpoint criterion needed for the construction with negative boundary trace.

\subsection{Exponent choices and the initial data functional}

Let
\[
	\mathcal M_{\rm set}
	=\left\{1+\frac{m}{2j+1}:m\in\mathbb N^+,\ j\in\mathbb N_0\right\}.
\]
Fix exponents
\begin{equation}
	\label{eq:pq-range}
	q\in\mathcal M_{\rm set},
	\qquad
	\frac32<q<p,
	\qquad
	4p^2(\alpha-1)^2<(2p-1)\alpha(4\alpha-2),
\end{equation}
which is possible throughout \eqref{eq:exponent-range}; see
\cite[Definition~2.4 and Remark~2.11]{HuangMengZhang2025}.  Set
\begin{equation}
	\label{eq:s-def}
	s=\alpha-1+\frac1{2q}.
\end{equation}
For positive radial initial data define
\begin{equation}
	\label{eq:L0}
	\begin{aligned}
		\mathcal L_0:={}&
		\int_{B_R}\left(
		\rho_0+\rho_0^\gamma
		+\rho_0\abs{\boldsymbol u_0}^2
		+\rho_0\abs{\boldsymbol u_0}^{2p}
		+\abs{\nabla\rho_0^{\alpha-1/2}}^2
		\right)\dd x\\
		&+\int_{B_R}\left(
		\rho_0\abs{v_0}^2
		+\rho_0\abs{v_0}^{2q}
		+\abs{\nabla\rho_0^s}^{2q}
		\right)\dd x.
	\end{aligned}
\end{equation}

\subsection{Estimates on bounded time intervals and the criterion for density loss}

\begin{proposition}[Estimates on bounded time intervals and criterion for density loss]
	\label{prop:bounded-time}
	Assume \eqref{eq:exponent-range} and fix $p,q,s$ by
	\eqref{eq:pq-range}--\eqref{eq:s-def}.  Let
	$(\rho,\boldsymbol u)$ be the maximal positive radial classical solution
	issued from compatible positive $H^3$ data.  Then, for every finite
	$T_0>0$,
	\begin{equation}
		\label{eq:bounded-time-estimates}
		\sup_{0\le t<\min\{T^*,T_0\}}
		\left(
		\norm{\rho(t)}_{L^\infty(B_R)}
		+\norm{\nabla\rho^s(t)}_{L^{2q}(B_R)}
		+\int_{B_R}\rho\abs{\boldsymbol u}^{2p}\dd x
		\right)
		\le C(T_0,R,\mathcal L_0).
	\end{equation}
	If the maximal positive classical lifespan is finite, then
	\begin{equation}
		\label{eq:bounded-time-density-loss}
		\liminf_{t\uparrow T^*}
		\min_{\overline{B_R}}\rho(t)=0.
	\end{equation}
\end{proposition}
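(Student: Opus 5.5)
The plan is to import the a priori machinery of Huang--Meng--Zhang \cite{HuangMengZhang2025}, which applies verbatim in the range \eqref{eq:exponent-range}, and then derive the density-loss criterion from Proposition~\ref{prop:local-restart}. The estimates \eqref{eq:bounded-time-estimates} come in the following order.

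First we take the basic energy inequality, which bounds $\int\rho|\boldsymbol u|^2+\rho^\gamma$. Next is the BD entropy, which bounds $\int\rho|v|^2$ and $\int|\nabla\rho^{\alpha-1/2}|^2$ using $\mu=\rho^\alpha$, $\lambda=(\alpha-1)\rho^\alpha$. Third, we obtain the higher integrability $\int\rho|v|^{2q}$ by testing the effective equation $D_tv+F_1(\rho)(v-u)=0$ against $\rho|v|^{2q-2}v$. Since $F_1(\rho)(v-u)=F_1(\rho)\alpha\rho^{\alpha-2}\rho_r$ is a pressure gradient, this term is controlled in terms of $\nabla\rho^s$ with $s$ from \eqref{eq:s-def}. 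The arithmetic condition $q\in\mathcal M_{\rm set}$ lets the odd power $|v|^{2q-2}v$ be handled as a smooth function.

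The weighted gradient bound $\|\nabla\rho^s\|_{L^{2q}}$ then follows from $v-u=\partial_r\Phi_\alpha(\rho)$ once we have the Mellet--Vasseur-type bound $\int\rho|\boldsymbol u|^{2p}$. That bound is proved by testing the momentum equation with $|\boldsymbol u|^{2p-2}\boldsymbol u$. The condition $4p^2(\alpha-1)^2<(2p-1)\alpha(4\alpha-2)$ is exactly what makes the viscous dissipation absorb the cross term coming from $\lambda$.

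Finally, the $L^\infty$ bound on $\rho$ follows in radial symmetry from $\nabla\rho^s\in L^{2q}$ with $2q>3$ by a one-dimensional embedding away from the origin. Near the origin we combine it with mass conservation and the lower bound $\rho\ge$ (average) somewhere, giving $\rho^s\le C$. The constraint $\gamma<3\alpha-1$ enters when the pressure term in the $v$-estimate is closed by Gronwall on $[0,T_0]$. Since all these steps are in \cite{HuangMengZhang2025}, I will cite them rather than redo them, and only check that they use nothing beyond positivity and regularity of the classical solution.

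The density-loss criterion is proved by contradiction. Suppose $T^*<\infty$ but $\rho\ge c>0$ on $[0,T^*)$. With two-sided density bounds the system is uniformly parabolic in $u$. The standard higher-order energy estimates for the momentum equation, and for $\rho$ via the continuity equation, then yield uniform $H^3$ bounds on $(\rho,\boldsymbol u)$ and a bound on $\|\mathcal G_{\alpha,\gamma}\|_{H^1}$ up to $T^*$. These estimates go through $\boldsymbol u_t$, $\nabla^2\boldsymbol u$, the Lipschitz bound of $\nabla\boldsymbol u$ via $L^2H^4$, and then $\rho\in H^3$. Proposition~\ref{prop:local-restart} has a uniform existence time $T_0$ for such data, so restarting at $T^*-T_0/2$ extends the solution and contradicts maximality. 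This shows $\inf_{[0,T^*)}\min\rho=0$. Since $\rho$ is positive and continuous on each compact $[0,\tau]$, the infimum can only be approached as $t\uparrow T^*$, which gives \eqref{eq:bounded-time-density-loss} in liminf form.

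The main obstacle I expect is the higher-order continuation step. Obtaining $H^3$ control from only $c\le\rho\le C$ needs an intermediate bound $\|\nabla u\|_{L^1L^\infty}$, which I would obtain from the radial structure: $u/r$ and $u_r$ are controlled through $\U$ and the effective viscous flux.
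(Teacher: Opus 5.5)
\textbf{There is a genuine gap in the first half of your plan.} Your architecture matches the paper's: energy and BD identities, a $2p$ moment of $u$, a $2q$ moment of $v$, then $\nabla\rho^s\in L^{2q}$ with $2q>3$, then continuation by contradiction with Proposition~\ref{prop:local-restart}. Your continuation step is fine. Restarting with a uniform local time works as well as the paper's strong $H^3$ terminal limit. The paper closes the higher-order estimates via $\dot{\boldsymbol u}$-estimates and Dirichlet elliptic regularity. Their key lower-order input is $\nabla\rho\in L^\infty_tL^{2\ell}$ with $2\ell>3$, which follows from the $\nabla\rho^s$ bound once $\rho\ge c$. So no separate radial $L^1_tL^\infty_x$ bound on $\nabla\boldsymbol u$ is needed.

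The point of the first half is that no density upper bound is available yet, and two of your steps need one. In the $2p$ moment, the condition on $p$ only handles the viscous cross term. The pressure terms $\int\rho^\gamma(|u|^{2p-2}|u_r|r^2+|u|^{2p-1}r)\,dr$ leave, after Young, the remainder $\int_0^R\rho^{E}r^{2p}\,dr$ with $E=2p(\gamma-\alpha)+\alpha$, and you never control it. In the $2q$ moment, you estimate the $F_1(v-u)$ term through $\nabla\rho^s$. With $Y=\int|v|^{2q}\,dm$, this gives $\int F_1|v|^{2q-1}|v-u|\,dm\le C\|\rho\|_{L^\infty}^{\gamma-\alpha}Y^{(2q-1)/(2q)}\|\nabla\rho^s\|_{L^{2q}}$. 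Feeding back $\|\rho\|_{L^\infty}^s\lesssim1+\|\nabla\rho^s\|_{L^{2q}}$ yields $Y'\le C(1+Y)^{1+(\gamma-\alpha)/(2qs)}$. That inequality is superlinear and gives no bound on an arbitrary $[0,T_0]$. Saying you will ``cite Huang--Meng--Zhang'' does not fix this: the mechanism you describe would not close, and checking ``nothing beyond positivity and regularity'' is not the relevant check.

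\textbf{Two ideas are missing.} First, the $F_1$ term is a damping, not a forcing. Since $|v|^{2q-2}v(v-u)\ge c|v-u|^{2q}-C|u|^{2q}$, the $v$-identity produces dissipation $c\int\rho^{\gamma-\alpha}|v-u|^{2q}\,dm$. It leaves only the source $\int\rho^{\gamma-\alpha+1}|u|^{2q}r^2\,dr$. The paper handles this source by writing $\rho^\alpha|u|^{2q}=(\rho^\alpha|u|^{2p})^{\theta}(\rho^\alpha u^2)^{1-\theta}$ with $\theta=(q-1)/(p-1)$, using the time-integrated $2p$ and energy dissipations. So the $2p$ moment must come before the $2q$ moment.

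Second, the leftover density weights, $\rho^Er^{2p}$ in the $2p$ step and $\sup\rho^{\gamma-2\alpha+1}r^2$ in the $2q$ step, are controlled by a center-weighted bound $\sup_t\|\rho^{\alpha-1/2}r^{1/2+\xi}\|_{L^\infty(0,R)}\le C_\xi$. This bound comes from the BD bound on $\nabla\rho^{\alpha-1/2}$, the pressure energy, Sobolev ($\rho^{\alpha-1/2}\in L^6$), and $W^{1,1}(0,R)\subset L^\infty$ applied to $\rho^{\alpha-1/2}r^{1/2+\xi}$. This is where the exponent range is used:
\begin{itemize}
\item Integrability of $r^{2p-E(1/2+\xi)/(\alpha-1/2)}$ needs $\gamma<3\alpha-1+\frac{\alpha-1}{2p}$. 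So $\gamma<3\alpha-1$ enters the $u$-moment, not a Gronwall step for $v$ as you state.
\item $\gamma\ge2\alpha-1$ makes $\gamma-2\alpha+1\ge0$, which the weight in the $2q$ step requires.
\end{itemize}

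\textbf{A smaller slip.} For the $L^\infty$ bound you need a point where $\rho$ is \emph{at most} its average, not at least. Alternatively, use $\|\rho^s\|_{L^1}$ from the pressure energy, since $s<2\alpha-1\le\gamma$. Also, since $2q>3$, Morrey applies directly on $B_R$ without treating the origin separately.
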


\begin{proof}
	Fix
	\[
		0<T<\min\{T^*,T_0\}.
	\]
	All constants below are uniform for such $T$.  We suppress the surface area factor $4\pi$ and use the outward mass coordinate from
	\eqref{eq:mass-coordinate}.  The identities recorded there give
	\[
		\partial_r=\rho r^2\partial_m,
		\qquad
		\int_0^MG(m)\dd m=\int_0^R\rho G(r)r^2\dd r.
	\]
	Time differentiation at fixed $m$ is the material derivative.

	We first derive the basic estimates directly from the radial equations.
	Using $d$ from \eqref{eq:angular-divergences}, the continuity equation and
	the radial momentum equation can be written as
	\[
		\Dt\rho=-\rho d
	\]
	and
	\[
		\rho\Dt u+(\rho^\gamma)_r
		-\bigl(\alpha\rho^\alpha d\bigr)_r
		+\frac2r(\rho^\alpha)_r u=0.
	\]
	
	\smallskip
	\noindent
	\emph{Step 1: Basic $L^2$ estimates.}
	Multiplying the radial momentum equation by $ur^2$ using continuity equation and integration by parts. We have
	\begin{equation}
			\begin{aligned}
			&\frac{\dd}{\dd t}\int_0^R
			\left(\frac12\rho u^2+\frac{\rho^\gamma}{\gamma-1}\right)r^2\dd r\\
			&\quad+\int_0^R\rho^\alpha\left(
			\alpha u_r^2r^2+4(\alpha-1)uu_rr+(4\alpha-2)u^2
			\right)\dd r=0.
		\end{aligned}
	\end{equation}
	Since $\alpha>1$, the dissipation is nonnegative and controls
	\[
		\int_0^R\rho^\alpha(u_r^2r^2+u^2)\dd r.
	\]

	We next derive the BD estimates. $v$ satisfies the equation
	\[
		\Dt v+\frac1\rho(\rho^\gamma)_r=0.
	\]
	Multiplication of the effective equation by $\rho vr^2$ gives
	\begin{equation}
		\begin{aligned}
			&\frac{\dd}{\dd t}\int_0^R
			\left(\frac12\rho v^2+\frac{\rho^\gamma}{\gamma-1}\right)r^2\dd r\\
			&\qquad+\alpha\gamma\int_0^R
			\rho^{\alpha+\gamma-3}\rho_r^2r^2\dd r=0.
		\end{aligned}
	\end{equation}

	The initial terms in both identities are bounded by $\mathcal L_0$.  Moreover,
	\[
		\abs{\partial_r\rho^{\alpha-1/2}}^2
		=\frac{(\alpha-1/2)^2}{\alpha^2}\rho\abs{v-u}^2
		\le C_\alpha\rho(v^2+u^2).
	\]
	The ordinary and BD energy identities therefore imply
	\begin{equation}
		\label{eq:hmz-base}
		\sup_{0\le t\le T}\left(
		\int_0^R\rho^\gamma r^2\dd r
		+\int_0^R
		\abs{\partial_r\rho^{\alpha-1/2}}^2r^2\dd r
		\right)\le C(R,\mathcal L_0),
	\end{equation}
	together with
	\[
	\begin{aligned}
		&\int_0^T\int_0^R\rho^\alpha(u_r^2r^2+u^2)\dd r\dd t\\
		&\qquad+\int_0^T\int_0^R
		\rho^{\alpha+\gamma-3}\rho_r^2r^2\dd r\dd t
		\le C(R,\mathcal L_0).
	\end{aligned}
	\]

	\smallskip
	\noindent
	\emph{Step 2: center weight estimate.}
	Let $f=\rho^{\alpha-1/2}$.  Since
	$\alpha-1/2<\gamma$, H\"older's inequality and
	\eqref{eq:hmz-base} give control
	\[
	\begin{aligned}
		\int_0^R\rho^{\alpha-1/2}r^2\,dr
		&=
		\int_0^R
		\left(\rho^\gamma r^2\right)^{\frac{\alpha-1/2}{\gamma}}
		\left(r^2\right)^{1-\frac{\alpha-1/2}{\gamma}}
		\,dr\\
		&\le
		\left(\int_0^R\rho^\gamma r^2\,dr\right)^{\frac{\alpha-1/2}{\gamma}}
		\left(\int_0^Rr^2\,dr\right)^{1-\frac{\alpha-1/2}{\gamma}}\\
		&\le C(R,\mathcal L_0).
	\end{aligned}
	\]
	Hence Sobolev--Poincar\'e on $B_R$ gives
	\[
		\|f\|_{L^6(B_R)}
		\le C_R\left(\|\nabla f\|_{L^2(B_R)}
		+\|f\|_{L^1(B_R)}\right)
		\le C(R,\mathcal L_0),
	\]
	or, in radial variables,
	\begin{equation}
		\label{eq:hmz-rho-power}
		\sup_{0\le t\le T}\int_0^R\rho^{6\alpha-3}r^2\dd r
		\le C(R,\mathcal L_0).
	\end{equation}
	We also retain the center weight.  Applying
	$W^{1,1}(0,R)\hookrightarrow L^\infty(0,R)$ to
	$fr^{1/2+\xi}$ yields
	\[
	\begin{aligned}
		\|fr^{1/2+\xi}\|_\infty
		\le C\bigg(&\int_0^Rfr^{1/2+\xi}\dd r
		+\int_0^R\abs{f_r}r^{1/2+\xi}\dd r\\
		&+\int_0^Rfr^{-1/2+\xi}\dd r\bigg).
	\end{aligned}
	\]
	The first term is controlled by \eqref{eq:hmz-rho-power}, 
	\[
	\begin{aligned}
		\int_0^Rfr^{1/2+\xi}\dd r
		&\le
		\left(\int_0^R(fr^{1/3})^6\,dr\right)^{1/6}
		\left(
		\int_0^R
		r^{(1/6+\xi)\frac65}\,dr
		\right)^{5/6}\\
		&=
		\left(\int_0^Rf^6r^2\,dr\right)^{1/6}
		\left(
		\int_0^Rr^{1/5+6\xi/5}\,dr
		\right)^{5/6}.
	\end{aligned}
	\]
	While H\"older's inequality gives
	\[
	\begin{aligned}
		\int_0^R\abs{f_r}r^{1/2+\xi}\dd r
		&\le\left(\int_0^R\abs{f_r}^2r^2\dd r\right)^{1/2}
		\left(\int_0^Rr^{-1+2\xi}\dd r\right)^{1/2},\\
		\int_0^Rfr^{-1/2+\xi}\dd r
		&\le\left(\int_0^Rf^6r^2\dd r\right)^{1/6}
		\left(\int_0^Rr^{-1+6\xi/5}\dd r\right)^{5/6}.
	\end{aligned}
	\]
	All radial weights are integrable for every $\xi>0$.  Thus
	\begin{equation}
		\label{eq:hmz-center-weight}
		\sup_{0\le t\le T}
		\|\rho^{\alpha-1/2}r^{1/2+\xi}\|_{L^\infty(0,R)}
		\le C_\xi(R,\mathcal L_0).
	\end{equation}

	\smallskip
	\noindent
	\emph{Step 3: $2p$ velocity moment estimate.}
	We next derive the $2p$ velocity moment from the radial momentum
	equation.  Multiplying it by
	$\abs{u}^{2p-2}ur^2$ and using the radial continuity equation gives
	\[
	\begin{aligned}
		&\frac1{2p}\frac{\dd}{\dd t}
		\int_0^R\rho\abs{u}^{2p}r^2\dd r\\
		&+\int_0^R\rho^\alpha\bigl[
		(4\alpha-2)\abs{u}^{2p}
		+(2p-1)\alpha\abs{u}^{2p-2}u_r^2r^2\\
		&\hspace{10em}
		+4p(\alpha-1)\abs{u}^{2p-2}u\,u_rr
		\bigr]\dd r\\
		&=\int_0^R\rho^\gamma\bigl[
		(2p-1)\abs{u}^{2p-2}u_rr^2
		+2\abs{u}^{2p-2}ur\bigr]\dd r.
	\end{aligned}
	\]
	The restriction on $p$ in \eqref{eq:pq-range} is precisely
	\[
		[4p(\alpha-1)]^2
		<4(4\alpha-2)(2p-1)\alpha,
	\]
	so the quadratic form on the left-hand side is positive	definite and dominate
	\[
	\rho^\alpha\left(
	|u|^{2p}
	+|u|^{2p-2}u_r^2r^2
	\right).
	\]
	Now we estimates those two terms on the right-hand side. With
	\[
		E:=2p(\gamma-\alpha)+\alpha,
	\]
	Young's inequality bounds the pressure terms by
	\[
	\begin{aligned}
		&\int_0^R\rho^\gamma\left(
		\abs{u}^{2p-2}\abs{u_r}r^2+\abs{u}^{2p-1}r\right)\dd r\\
		&\quad\le\varepsilon\int_0^R\rho^\alpha
		\left(\abs{u}^{2p}
		+\abs{u}^{2p-2}u_r^2r^2\right)\dd r
		+C_\varepsilon\int_0^R\rho^Er^{2p}\dd r.
	\end{aligned}
	\]

	For the density remainder. Let $a=\alpha-\frac12$. The center weight estimate gives
	\[
	\left\|\rho^a r^{1/2+\xi}\right\|_{L^\infty(0,R)}
	\le C_\xi(R,\mathcal L_0),
	\]
	for any $\xi>0$. Factor
	\[
	\begin{aligned}
		\rho^Er^{2p}
		&=
		\left(
		\rho^a r^{1/2+\xi}
		\right)^{E/a}
		r^{\,2p-\frac{E}{a}(1/2+\xi)}.
	\end{aligned}
	\]
	Therefore,
	\[
	\begin{aligned}
		\int_0^R\rho^Er^{2p}\,dr
		&\le
		\left\|\rho^ar^{1/2+\xi}\right\|_\infty^{E/a}
		\int_0^R
		r^{\,2p-\frac{E}{a}(1/2+\xi)}\,dr.
	\end{aligned}
	\]
	The radial integral is finite provided
	\[
	2p-\frac{E}{a}\left(\frac12+\xi\right)>-1.
	\]
	Equivalently,
	\[
	0<\xi<
	\frac{a(2p+1)}{E}-\frac12
	=
	\frac{
		p\left(3\alpha-1+\frac{\alpha-1}{2p}-\gamma\right)
	}{
		2p(\gamma-\alpha)+\alpha
	}.
	\]
	The upper endpoint is positive because
	$$
	\gamma<3\alpha-1
	<
	3\alpha-1+\frac{\alpha-1}{2p}.
	$$
	Thus, after choosing $\varepsilon$ small and integrating in time,
	we obtain
	\begin{align}
		{}&
		\sup_{0\le t\le T}\int_0^R\rho\abs{u}^{2p}r^2\dd r
		\notag\\
		&+\int_0^T\int_0^R\rho^\alpha
		\left(\abs{u}^{2p}
		+\abs{u}^{2p-2}u_r^2r^2\right)\dd r\dd t
		\le C(T_0,R,\mathcal L_0).
		\label{eq:hmz-Ap}
	\end{align}

	\smallskip
	\noindent
	\emph{Step 4: $2q$ effective velocity moment estimate.}
	For the effective velocity, multiply the mass-coordinate equation by $\abs{v}^{2q-2}v$. We have
	\[
	\frac1{2q}\frac{d}{dt}\int_0^M|v|^{2q}\,dm
	+
	\frac{\gamma}{\alpha}
	\int_0^M
	\rho^{\gamma-\alpha}
	|v|^{2q-2}v(v-u)\,dm
	=0.
	\]
	Since
	\[
		\abs{v-u}^{2q}
		\le C_q\left(
		\abs{v}^{2q-2}v(v-u)+\abs{u}^{2q}\right),
	\]
	we have
	\[
	\begin{aligned}
		&\frac1{2q}\frac{d}{dt}\int_0^M|v|^{2q}\,dm +
		c\int_0^M
		\rho^{\gamma-\alpha}|v-u|^{2q}\,dm\\
		&\le
		C\int_0^M
		\rho^{\gamma-\alpha}|u|^{2q}\,dm.
	\end{aligned}
	\]
	Integration over $(0,T)$ gives
	\begin{align}
		&\sup_{0\le t\le T}\int_0^M\abs{v}^{2q}\dd m
		+c\int_0^T\int_0^M
		\rho^{\gamma-\alpha}\abs{v-u}^{2q}\dd m\dd t
		\notag\\
		&\qquad\le C\mathcal L_0+C\int_0^T\int_0^R
		\rho^{\gamma-\alpha+1}\abs{u}^{2q}r^2\dd r\dd t.
		\label{eq:hmz-vq}
	\end{align}
	To estimate the last integral, set
	$\theta=(q-1)/(p-1)$.  Then
	\[
		\rho^\alpha\abs{u}^{2q}
		=(\rho^\alpha\abs{u}^{2p})^\theta
		(\rho^\alpha u^2)^{1-\theta},
	\]
	and consequently
	\[
	\begin{aligned}
		&\int_0^T\int_0^R
		\rho^{\gamma-\alpha+1}\abs{u}^{2q}r^2\dd r\dd t\\
		&\quad\le
		\sup_{[0,T]\times(0,R)}
		(\rho^{\gamma-2\alpha+1}r^2)
		\left(\int_0^T\int_0^R
		\rho^\alpha\abs{u}^{2p}\dd r\dd t\right)^\theta
		\left(\int_0^T\int_0^R
		\rho^\alpha u^2\dd r\dd t\right)^{1-\theta}.
	\end{aligned}
	\]
	The last two factors are controlled by \eqref{eq:hmz-Ap} and the
	ordinary energy dissipation.  If $k=\gamma-2\alpha+1$, then
	$0\le k<4\alpha-2$, and
	\[
		\rho^kr^2
		=\left(\rho^{\alpha-1/2}r^{1/2+\xi}\right)^{k/(\alpha-1/2)}
		r^{\,2-\frac{k}{\alpha-1/2}(1/2+\xi)}.
	\]
	For small $\xi>0$ the remaining power of $r$ is nonnegative; hence
	\eqref{eq:hmz-center-weight} controls the supremum above. Then we have
	\[
	\begin{aligned}
		&\sup_{0\le t\le T}\int_0^M|v(t,m)|^{2q}\,dm +
		c\int_0^T\int_0^M
		\rho^{\gamma-\alpha}|v-u|^{2q}\,dm\,dt\\
		&\le C(T_0,R,\mathcal L_0).
	\end{aligned}
	\]

	\smallskip
	\noindent
	\emph{Step 5: $L^\infty$ estimates of $\rho^s$.}
	By \eqref{eq:s-def},
	\[
		\partial_r\rho^s
		=\frac{s}{\alpha}\rho^{1/(2q)}(v-u),
		\qquad
		\int_0^R\abs{\partial_r\rho^s}^{2q}r^2\dd r
		=C_{\alpha,q}\int_0^M\abs{v-u}^{2q}\dd m.
	\]
	At each time, the triangle inequality and interpolation give
	\[
	\begin{aligned}
		\int_0^M\abs{v-u}^{2q}\dd m
		&\le C_q\int_0^M(\abs{v}^{2q}+\abs{u}^{2q})\dd m,\\
		\int_0^M\abs{u}^{2q}\dd m
		&\le\left(\int_0^M\abs{u}^{2p}\dd m\right)^{q/p}
		M^{1-q/p}.
	\end{aligned}
	\]
	Using \eqref{eq:hmz-Ap} and \eqref{eq:hmz-vq}, we conclude that
	\begin{equation}
		\label{eq:hmz-Dq}
		\sup_{0\le t\le T}
		\|\nabla\rho^s(t)\|_{L^{2q}(B_R)}
		\le C(T_0,R,\mathcal L_0).
	\end{equation}
	Finally, $s<2\alpha-1\le\gamma$, so the pressure energy controls
	$\|\rho^s\|_{L^1(B_R)}$.  Since $2q>3$, Sobolev--Poincar\'e and
	\eqref{eq:hmz-Dq} imply
	\[
		\|\rho^s(t)\|_{L^\infty(B_R)}
		\le C_R\left(
		\|\nabla\rho^s(t)\|_{L^{2q}(B_R)}
		+\|\rho^s(t)\|_{L^1(B_R)}\right)
		\le C(T_0,R,\mathcal L_0).
	\]
	Because $s>0$, this and \eqref{eq:hmz-Ap} prove
	\eqref{eq:bounded-time-estimates} after
	$T\uparrow\min\{T^*,T_0\}$.

	\smallskip
	\noindent
	\emph{Step 6: density loss criterion.}
	It remains to prove the density loss criterion.  Suppose that
	$T^*<\infty$ and that \eqref{eq:bounded-time-density-loss} fails.  Then,
	for some $t_0<T^*$,
	\[
		0<\underline\rho\le\rho(t,x)\le\overline\rho
		\qquad(t_0\le t<T^*,\ x\in B_R).
	\]
	Choose $3/2<\ell<\min\{q,p,3\}$.  Since powers of $\rho$ are now
	bounded above and below, \eqref{eq:hmz-base}, \eqref{eq:hmz-Ap},
	\eqref{eq:hmz-vq}, and \eqref{eq:hmz-Dq}, together with the ordinary
	and BD dissipations, yield
	\[
	\begin{aligned}
		&\sup_{t_0\le t<T^*}\left(
		\|\boldsymbol u\|_{L^{2\ell}}
		+\|\nabla\rho\|_{L^2}
		+\|\nabla\rho\|_{L^{2\ell}}\right)\\
		&\quad+\int_{t_0}^{T^*}\left(
		\|\nabla\rho\|_{L^2}^2
		+\|\nabla\rho\|_{L^{2\ell}}^{2\ell}
		+\|\nabla\boldsymbol u\|_{L^2}^2\right)\dd t\le C.
	\end{aligned}
	\]

	For completeness, we indicate the higher-order continuation argument.
	Returning to vector notation and writing
	$\dot{\boldsymbol u}=\boldsymbol u_t+
	\boldsymbol u\cdot\nabla\boldsymbol u$, testing the momentum equation
	by $\dot{\boldsymbol u}$ gives
	\[
	\begin{aligned}
		&\frac{\dd}{\dd t}\int_{B_R}\left(
		\rho^\alpha\abs{\nabla\boldsymbol u}^2
		+(\alpha-1)\rho^\alpha
		\abs{\divg\boldsymbol u}^2\right)\dd x\\
		&\qquad+\int_{B_R}\rho\abs{\dot{\boldsymbol u}}^2\dd x
		\le C\left(
		\|\nabla\rho\|_{L^2}^2
		+\|\nabla\boldsymbol u\|_{L^2}^2\right).
	\end{aligned}
	\]
	The coefficient bounds and elliptic regularity for the Dirichlet
	momentum system also give
	\[
		\|\boldsymbol u\|_{H^2}
		\le C\left(
		\|\sqrt\rho\,\dot{\boldsymbol u}\|_{L^2}
		+\|\nabla\rho\|_{L^2}
		+\|\nabla\boldsymbol u\|_{L^2}\right).
	\]
	After time integration,
	\[
		\sup_{t_0\le t<T^*}\|\nabla\boldsymbol u\|_{L^2}
		+\int_{t_0}^{T^*}
		\|\sqrt\rho\,\dot{\boldsymbol u}\|_{L^2}^2\dd t\le C.
	\]

	Next apply the material derivative to the momentum equation and test
	by $\dot{\boldsymbol u}$.  The commutators are controlled by the
	preceding lower-order and elliptic estimates, leading to
	\[
	\begin{aligned}
		&\frac{\dd}{\dd t}
		\|\sqrt\rho\,\dot{\boldsymbol u}\|_{L^2}^2
		+c\|\nabla\dot{\boldsymbol u}\|_{L^2}^2\\
		&\quad\le C\left[
		\|\sqrt\rho\,\dot{\boldsymbol u}\|_{L^2}^2
		+\|\sqrt\rho\,\dot{\boldsymbol u}\|_{L^2}^4
		+\|\nabla\rho\|_{L^2}^2
		+\|\nabla\rho\|_{L^{2\ell}}^{2\ell}
		+\|\nabla\boldsymbol u\|_{L^2}^2\right].
	\end{aligned}
	\]
	The coefficient of the quadratic term is time-integrable by the
	previous estimate, so Gronwall's inequality yields
	\[
		\sup_{t_0\le t<T^*}\left(
		\|\sqrt\rho\,\dot{\boldsymbol u}\|_{L^2}
		+\|\boldsymbol u\|_{H^2}\right)
		+\int_{t_0}^{T^*}
		\|\nabla\dot{\boldsymbol u}\|_{L^2}^2\dd t\le C.
	\]

	Differentiating the continuity equation twice and using the elliptic
	momentum equation now gives
	\[
		\sup_{t_0\le t<T^*}\|\rho\|_{H^2}
		+\int_{t_0}^{T^*}\|\boldsymbol u\|_{H^3}^2\dd t\le C.
	\]
	One time differentiation of the momentum equation, followed by three
	spatial derivatives of the continuity equation and two spatial
	derivatives of the elliptic momentum equation, closes the remaining
	estimates:
	\[
	\begin{aligned}
		&\sup_{t_0\le t<T^*}\left(
		\|\rho\|_{H^3}+\|\rho_t\|_{H^2}
		+\|\boldsymbol u\|_{H^3}
		+\|\boldsymbol u_t\|_{H^1}\right)\\
		&\qquad+\int_{t_0}^{T^*}\left(
		\|\boldsymbol u\|_{H^4}^2
		+\|\boldsymbol u_t\|_{H^2}^2
		+\|\boldsymbol u_{tt}\|_{L^2}^2\right)\dd t\le C.
	\end{aligned}
	\]
	These are the differentiated estimates of
	\cite[Propositions~3.15--3.19]{HuangMengZhang2025}.  Their coercivity
	here follows from
	\[
		\int_{B_R}\rho^\alpha\left(
		\abs{\nabla z}^2+(\alpha-1)\abs{\divg z}^2\right)\dd x
		\ge\underline\rho^\alpha\|\nabla z\|_{L^2}^2,
		\qquad z=\boldsymbol u,\dot{\boldsymbol u},
	\]
	which is favorable because $\alpha>1$.

	The Hilbert triple $H^4\subset H^3\subset H^2$ and the last bounds
	give a strong terminal limit for $\boldsymbol u$ in $H^3$.  For
	$t_0\le\tau<t<T^*$, the differentiated continuity equation gives
	\[
		\left|
		\|\rho(t)\|_{H^3}^2-\|\rho(\tau)\|_{H^3}^2
		\right|
		\le C\int_\tau^t
		\left(1+\|\boldsymbol u(\sigma)\|_{H^4}\right)\dd\sigma.
	\]
	Together with strong $H^2$ continuity and weak $H^3$ compactness, this
	shows that $\rho$ also has a strong terminal limit in $H^3$.  The limit
	remains between $\underline\rho$ and $\overline\rho$, so
	Proposition~\ref{prop:local-restart} restarts the positive classical
	solution beyond $T^*$.  This contradicts maximality and proves
	\eqref{eq:bounded-time-density-loss}.
\end{proof}

Proposition~\ref{prop:bounded-time} is the bridge between the two parts of
the paper.  In the good boundary case, its upper bound for the density activates Theorem~\ref{thm:main} and rules out a finite endpoint.  In the bad boundary case, the same estimates persist uniformly along the constructed family, while the criterion for density loss identifies what must fail at the finite endpoint.

\section{Nonnegative boundary trace: positive density preservation}

The proof of Theorem~\ref{thm:main} has three analytic stages.  We first
control the positive parts of the transverse strain rate functions of physical and effective velocities.  We then propagate a pathwise upper bound for an effective pressure variable.  Finally, the boundary sign supplies the favorable boundary condition for a maximum principle applied to $1/\rho$.

\subsection{Mass normalization and maximum point equations}

The first stage of the proof is a coupled maximum principle estimate for the
physical and effective velocities. To formulate that argument on the closed
mass interval, we renormalize the velocities by the center scale
$m^{1/3}$. Since $m^{1/3}$ vanishes linearly with $r$ at the center, the
resulting variables retain the first nontrivial center behavior of $u$ and
$v$ and extend smoothly to $m=0$. We now introduce these variables and record
the maximum point identities used in the next subsection. Throughout this
subsection, time derivatives are taken at fixed outward mass label.

Define
\begin{equation}
	\label{eq:sigma-P-Z}
	\sigma(t,m):=\frac{r(t,m)}{m^{1/3}},
	\qquad
	\Pcal:=\sigma\U=\frac{u}{m^{1/3}},
	\qquad
	\Z:=\sigma\V=\frac{v}{m^{1/3}},
\end{equation}
using smooth center limits at $m=0$. Then at fixed outward mass label, 
\begin{equation}
	\label{eq:sigma-Z-evolution}
	\sigma_t=\Pcal,
	\qquad
	\Z_t=F_1(\rho)(\Pcal-\Z).
\end{equation}
Substitution of the equation for the transverse strain rate function of  velocity \eqref{eq:normalized-u-equation} gives the following two record identities.
Let $e:=\frac{\rho r^3}{3m}$.  At every spatial maximum of $\Pcal$ attained at
positive radius,
\begin{equation}
	\label{eq:P-record}
	\Pcal_t
	=\alpha\rho^{\alpha-1}\Pcal_{rr}
	-\frac{2\alpha\rho^{\alpha-1}(1-e)^2}{r^2}\Pcal
	+(\Pcal-\Z)\left(F_1-C_e\U\right),
\end{equation}
where
\begin{equation}
	\label{eq:Ce}
	C_e:=2(\alpha-1)+(\alpha+1)e\ge2(\alpha-1).
\end{equation}
At a positive maximum of $\Pcal$ at the center one has the separate exact identity
\begin{equation}
	\label{eq:P-record-center}
	\Pcal_t(t,0)
	=5\alpha\rho(t,0)^{\alpha-1}\Pcal_{rr}(t,0)
	+(\Pcal(t,0)-\Z(t,0))
	\left(F_1(\rho(t,0))-(3\alpha-1)\U(t,0)\right).
\end{equation}

The positive-radius and center formulas must be kept separate: at the center,
the radial operator $\partial_{rr}+4r^{-1}\partial_r$ contributes
$5\partial_{rr}$, which produces \eqref{eq:P-record-center} rather than the
formal $r\downarrow0$ limit of \eqref{eq:P-record}. Together with
\eqref{eq:sigma-Z-evolution}, these identities provide the sign mechanism for
the joint positive maximum of $\Pcal$ and $\Z$, to which we now turn.

\subsection{Maximum principle bounds for transverse strain rate functions}
Here and after for any real-valued function $f$, its positive and negative parts are defined as
\[
f_+:=\max\{f,0\},
\qquad
f_-:=\max\{-f,0\}.
\]
\begin{proposition} \label{prop:angular-bounds}
	There exists a constant $C_T$ such that
	\begin{equation}
		\label{eq:angular-positive-bounds}
		\sup_{0\le t<T,\,0\le r\le R}
		\U_+(t,r)
		+\sup_{0\le t<T,\,0\le r\le R}
		\V_+(t,r)
		\le C_T,
	\end{equation}
	and, along every fixed mass label $m\in[0,M]$,
	\begin{equation}
		\label{eq:path-angular-u-minus}
		\int_0^t
		\U_-(s,r(s,m))\dd s\le C_T
		\qquad(0\le t<T).
	\end{equation}
\end{proposition}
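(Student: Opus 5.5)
The plan is to run a joint maximum principle for the pair $(\Pcal,\Z)$ on the closed mass interval $[0,M]$, and then convert the result back to $\U,\V$ using two-sided bounds on $\sigma$. Throughout, time derivatives are taken at fixed mass label.

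\emph{Step 1: an upper barrier for the joint positive maximum.} Set
\[
\Theta(t):=\max\Bigl\{\max_{m}\Pcal_+(t,m),\ \max_m\Z_+(t,m)\Bigr\}
\]
and estimate $D^+\Theta$ with Lemma~\ref{lem:Dini}.
\begin{enumerate}[label=\textup{(\alph*)},leftmargin=2.2em]
\item If the maximum is attained by $\Z$, then \eqref{eq:sigma-Z-evolution} gives $\Z_t=F_1(\Pcal-\Z)\le0$, because $\Pcal\le\Z$ there.
\item If the maximum is attained by $\Pcal$ at positive radius, we use \eqref{eq:P-record}. The diffusion term $\Pcal_{rr}\le0$ and the damping term $-2\alpha\rho^{\alpha-1}(1-e)^2r^{-2}\Pcal\le0$ both have the good sign. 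The coupling factor $\Pcal-\Z\ge0$ multiplies $F_1-C_e\U$, and since $\U\ge0$ at a positive maximum this factor is at most $F_1(\overline\rho_T)$.
\item If the maximum is attained by $\Pcal$ at the center, \eqref{eq:P-record-center} is treated in the same way.
\item At the outer boundary $\Pcal=0$, so a positive maximum cannot sit there.
\end{enumerate}
This yields $D^+\Theta\le F_1(\overline\rho_T)\,(\Pcal-\Z)_+$.

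\emph{Step 2: closing Step 1 without a lower bound on $\Z$.} The bound $(\Pcal-\Z)_+\le\Pcal$ only holds when $\Z\ge0$, so I would handle the coupling differently. Since $\Z_t=F_1(\Pcal-\Z)$ relaxes $\Z$ toward $\Pcal$, I would take the combined maximum of $\Pcal$ and $\Z$ rather than of each separately. Then the $\Pcal$-equation loses only the factor $(\Pcal-\Z)(F_1-C_e\U)$, and this factor is $\le0$ whenever $\U\ge F_1/C_e$. So either $\U$ at the maximum point is already bounded by $F_1(\overline\rho_T)/(2(\alpha-1))$, or $\Theta$ is nonincreasing there.

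\emph{Step 3: passing from $\Pcal,\Z$ to $\U,\V$.} Step 2 bounds $\Theta$, and I convert it using bounds on $\sigma=r/m^{1/3}$. We have $\sigma^3=r^3/m$, which lies between $3/\overline\rho_T$ and $3/\min\rho$; the lower bound is uniform. Since $\U_+=\Pcal_+/\sigma\le C\,\Pcal_+$, Steps 1–2 give an upper bound for $\U_+$ once $\sigma$ is bounded below, and likewise for $\V_+$.

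For the upper bound on $\sigma$ I would not use the density. Instead I would use $\sigma_t=\Pcal\le\Theta$, which gives $\sigma\le\sigma_0+C_Tt$ after the bound on $\Theta$ is in hand. The estimate is self-consistent once it is organized as a Dini–Gronwall bound (Lemma~\ref{lem:Dini-Gronwall}) on $\max(\Theta,1)$.

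\emph{Step 4: the pathwise bound \eqref{eq:path-angular-u-minus}.} Along a fixed mass label, $(\log\sigma)_t=\U$, and hence
\[
\int_0^t\U_-\dd s=\int_0^t\U_+\dd s-\log\frac{\sigma(t)}{\sigma(0)}.
\]
The first term is at most $C_T T$ by \eqref{eq:angular-positive-bounds}. The second term is controlled by the lower bound $\sigma^3\ge 3/\overline\rho_T$ together with the initial upper bound on $\sigma(0)$, which is finite because $\rho_0$ is positive. At $m=0$ I use the center limit, which is the same identity with $\sigma=(3/\rho(t,0))^{1/3}$.

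The main obstacle is Step 2: closing the coupled maximum when the maximum of $\Pcal$ sits at a point where $\Z$ is very negative. The reaction term $(\Pcal-\Z)F_1$ is then large, and the compensating term $-C_e\U(\Pcal-\Z)$ only helps if $\U$ exceeds a threshold. My first attempt would be to argue exactly as in Step 2: either $\U\le F_1(\overline\rho_T)/(2(\alpha-1))$ at the maximizer, which bounds $\Pcal=\sigma\U$ directly, or the right-hand side is nonpositive.
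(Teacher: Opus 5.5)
Your ingredients are the paper's. These are the joint positive maximum of $\Pcal$ and $\Z$, and the threshold $C_A=F_T/(2(\alpha-1))$ with $F_T=\frac{\gamma}{\alpha}\overline\rho_T^{\,\gamma-\alpha}$, which comes from $C_e\ge2(\alpha-1)$. They also include the lower bound $\sigma\ge(3/\overline\rho_T)^{1/3}$, the relation $\sigma_t=\Pcal$, and the identity $(\log\sigma)_t=\U$ for \eqref{eq:path-angular-u-minus}. Your Step~4 is fine; it needs only the lower bound on $\sigma(t)$ and the upper bound on $\sigma(0)$.

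The gap is in how you break the circularity between $\Theta$ and $\max_m\sigma$. A Dini--Gronwall inequality for $\max(\Theta,1)$ does not hold. Suppose $\U<C_A$ at a $\Pcal$-maximizer. Then $\Pcal_t$ contains $(\Pcal-\Z)(F_1-C_e\U)$, which can be as large as $F_1(\Pcal-\Z)$ with $\Z$ unbounded below. This is exactly the obstacle you identified, so nothing bounds $D^+\Theta$ by a multiple of $\Theta$ there. The inequality at the end of your Step~1 is equally unusable and should be dropped.

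What the dichotomy actually gives is a bound on the \emph{value} of $\Theta$, not on its rate. Set $\mathcal S(t):=\max_m\sigma(t,m)$ and suppose $\Theta(t)\ge C_A\mathcal S(t)>0$. At every $\Pcal$-maximizer, $\U=\Pcal/\sigma\ge\Theta/\mathcal S\ge C_A\ge F_1/C_e$, so every term of \eqref{eq:P-record} or \eqref{eq:P-record-center} is nonpositive. At $\Z$-maximizers, $\Z_t\le0$ regardless. Hence $D^+\Theta\le0$ whenever $\Theta\ge C_A\mathcal S$, and $\Theta$ can only grow while it lies below $C_A\mathcal S$.

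The paper turns this into the barrier bound $\Theta(t)\le\max\{\Theta(0),C_A\mathcal S^*(t)\}$, where $\mathcal S^*(t)=\max_{s\le t}\mathcal S(s)$. It proves this by a first-crossing argument, subtracting $\varepsilon t$ to make the decrease strict. Only afterwards does it use $\sigma_t=\Pcal\le\Theta$ to get $\mathcal S^*(t)\le\mathcal S(0)+\Theta(0)t+C_A\int_0^t\mathcal S^*(s)\dd s$, and then Gronwall.

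Equivalently, you may apply Lemma~\ref{lem:Dini-Gronwall} to the coupled quantity $G:=\max\{\Theta,C_A\mathcal S\}$. When $\Theta$ is the active branch you are in the decaying case above, so $D^+\Theta\le0$. Lemma~\ref{lem:Dini} gives $D^+\mathcal S\le\max_{\operatorname{argmax}\sigma}\Pcal\le\Theta\le G$. Together these give $D^+G\le C_AG$. With this replacing the ``$\max(\Theta,1)$'' step, the rest of your argument goes through and coincides with the paper's proof.
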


\begin{proof}
	We use an argument based on the maximum principle.  Set
	\[
	\mathcal S(t):=\max_{0\le m\le M}\sigma(t,m),
	\qquad
	\mathcal N(t):=\max_{0\le m\le M}\max\{\Pcal_+(t,m),\Z_+(t,m)\}.
	\]
	The upper density bound gives
	\[
	0\le F_1(\rho)\le F_T:=\frac{\gamma}{\alpha}\overline\rho_T^{\gamma-\alpha}.
	\] 
	Define $C_A:=\frac{F_T}{2(\alpha-1)}$. Fix a time at which
	\[
	\mathcal N>C_A\mathcal S.
	\]
	If the maximum is attained by $\Pcal$, then at the maximizing label
	$\Pcal \ge \Z$ and
	\[
	\U=\frac{\Pcal}{\sigma}
	>C_A\frac{\mathcal S}{\sigma}
	\ge C_A\ge\frac{F_1}{C_e}.
	\]
	The boundary cannot be this positive maximizer because
	$\Pcal(t,M)=u(t,R)/M^{1/3}=0$.  Therefore every term on the right-hand side
	of \eqref{eq:P-record} is nonpositive.  If the maximizing label is the
	center, use \eqref{eq:P-record-center} instead: there
	$\Pcal_{rr}\le0$, $3\alpha-1\ge2(\alpha-1)$, and the same sign argument
	again makes the right-hand side nonpositive.  Thus
	\[
	\Pcal_t\le0.
	\]
	If the active maximum is instead attained by $\Z$, then
	$\Z\ge \Pcal$ and \eqref{eq:sigma-Z-evolution} gives
	\[
	\Z_t\le0.
	\]
	
	Define
	\[
	\mathcal S^*(t):=\max_{0\le s\le t}\mathcal S(s),
	\qquad
	\mathcal N^*(t):=\max_{0\le s\le t}\mathcal N(s),
	\qquad
	N_0:=\mathcal N(0).
	\]
	For $\varepsilon>0$, set
	\[
	\mathcal N_\varepsilon(t)
	:=\max\left\{
	0,
	\max_{0\le m\le M}(\Pcal(t,m)-\varepsilon t),
	\max_{0\le m\le M}(\Z(t,m)-\varepsilon t)
	\right\}.
	\]
	Whenever
	\begin{equation}
		\label{eq:contradiction-condition}
		\mathcal N_\varepsilon(t)>0,
		\qquad
		\mathcal N_\varepsilon(t)+\varepsilon t>C_A\mathcal S(t),
	\end{equation}
	the preceding pointwise calculation gives
	\[
	\partial_t(\Pcal-\varepsilon t)\le-\varepsilon
	\quad\hbox{or}\quad
	\partial_t(\Z-\varepsilon t)\le-\varepsilon,
	\]
	according to which component is larger.  Lemma~\ref{lem:Dini}, applied on
	the compact disjoint union consisting of the constant zero component and two
	copies of $m\in[0,M]$, therefore gives
	\begin{equation}
		\label{eq:Nepsilon-Dini}
		D^+\mathcal N_\varepsilon(t)\le-\varepsilon.
	\end{equation}
	The continuous function
	\[
	H(t):=\max\{N_0,C_A\mathcal S^*(t)\}
	\]
	is nondecreasing and $\mathcal N_\varepsilon(0)\le H(0)$.  If
	$\mathcal N_\varepsilon$ crossed $H$ from below for the first time at some
	$t_0>0$, condition \eqref{eq:contradiction-condition} would hold there.
	Inequality \eqref{eq:Nepsilon-Dini} is incompatible with such a first
	crossing.  Thus
	\[
	\mathcal N_\varepsilon(t)\le H(t)
	\qquad(0\le t<T).
	\]
	Letting $\varepsilon\downarrow0$ gives
	$\mathcal N(t)\le H(t)$, and taking the time supremum gives
	\begin{equation}
		\label{eq:Nstar}
		\mathcal N^*(t)\le\max\{N_0,C_A\mathcal S^*(t)\}.
	\end{equation}
	
	Since $\sigma_t=\Pcal\le\mathcal N$,
	\eqref{eq:Nstar} implies
	\[
	\mathcal S^*(t)
	\le S_0+N_0t+C_A\int_0^t\mathcal S^*(s)\dd s,
	\qquad S_0:=\mathcal S(0).
	\]
	Gronwall's inequality yields
	\begin{equation}
		\label{eq:Sstar}
		\mathcal S^*(t)\le(S_0+N_0t)e^{C_At}.
	\end{equation}
	Thus $\mathcal N$ is uniformly bounded on $[0,T)$.
	
	Only the upper density bound is needed for a lower bound on $\sigma$:
	\[
	m(t,r)=\int_0^r\rho(t,s)s^2\dd s
	\le\frac{\overline\rho_T}{3}r^3.
	\]
	Therefore
	\begin{equation}
		\label{eq:sigma-lower}
		\sigma(t,m)\ge\sigma_*:=
		\left(\frac3{\overline\rho_T}\right)^{1/3}>0.
	\end{equation}
	Equations \eqref{eq:Nstar}--\eqref{eq:sigma-lower} give uniform bounds for $\Pcal_+/\sigma=(u/r)_+$ and $\Z_+/\sigma=(v/r)_+$, proving
	\eqref{eq:angular-positive-bounds}.
	
	Finally, $\sigma_t=\sigma \U$ gives, on every fixed mass trajectory,
	including the smooth center limit,
	\begin{equation}
		\label{eq:integral-angular-u}
		\int_0^t\U(s,r(s,m))\dd s
		=\log\frac{\sigma(t,m)}{\sigma(0,m)}.
	\end{equation}
	The lower bound \eqref{eq:sigma-lower} and upper bound
	\eqref{eq:Sstar} bound the right-hand side from both sides uniformly.  Since
	\[
	\U
	=\U_+ -\U_-,
	\qquad
	\U_+\le C_T.
	\]
	Therefore \eqref{eq:integral-angular-u} implies
	\[
	\int_0^t\U_-\dd s
	=\int_0^t\U_+\dd s
	-\int_0^t\U\dd s\le C_T,
	\]
	which proves \eqref{eq:path-angular-u-minus} without using a density lower bound.
\end{proof}

\subsection{Effective pressure}
Define the effective pressure
\begin{equation}
	\label{eq:def-Y}
	\Y:=\frac{v_r}{\rho}+F_2(\rho).
\end{equation}
Differentiating the equation for the effective velocity and using the continuity
equation gives
\begin{equation}
	\label{eq:Y-equation}
	\Dt\Y+\left(F_1-2\U\right)\Y
	=F_1F_2-2\U\left(\frac{F_1}{\rho}+F_2\right)
	-\frac{F_1'(\rho)}{\mu'(\rho)}(v-u)^2.
\end{equation}
Set
\begin{equation}
	\label{eq:def-Q}
	\Q(t):=\max_{0\le r\le R}\frac1{\rho(t,r)}.
\end{equation}

\begin{proposition}[Pathwise upper bounds for $\Y$]
	\label{prop:Y-bounds}
	There exists a constant $C_T$, depending only on the parameters, the
	upper bound for the density on the finite time interval, and the initial
	data, such that, along every fixed mass label $m\in[0,M]$, the following
	estimates hold for all $t<T$:
	\begin{align}
		\Y(t,m)
		&\le C_T+C_T\int_0^t
		\rho(s,m)^{\gamma-\alpha-1}\dd s,
		&&0<\gamma-\alpha<1,
		\label{eq:Y-path-sublinear}\\
		\Y(t,m)
		&\le C_T+C_T\int_0^t
		\log\left(1+\frac1{\rho(s,m)}\right)\dd s,
		&&\gamma-\alpha=1,
		\label{eq:Y-path-log}\\
		\Y(t,m)&\le C_T,
		&&\gamma-\alpha>1.
		\label{eq:Y-path-uniform}
	\end{align}
	The constants are uniform in $m$.  Consequently, with $\Q$ defined in
	\eqref{eq:def-Q},
	\begin{align}
		\sup_{0\le r\le R}\Y(t,r)
		&\le C_T+C_T\int_0^t
		\Q(s)^{1-(\gamma-\alpha)}\dd s,
		&&0<\gamma-\alpha<1,
		\label{eq:Y-bound-sublinear}\\
		\sup_{0\le r\le R}\Y(t,r)
		&\le C_T+C_T\int_0^t\log(1+\Q(s))\dd s,
		&&\gamma-\alpha=1,
		\label{eq:Y-bound-log}\\
		\sup_{0\le t<T,\,0\le r\le R}\Y(t,r)
		&\le C_T,
		&&\gamma-\alpha>1.
		\label{eq:Y-bound-uniform}
	\end{align}
	No lower bound for $\rho$ at later times is used in any of these estimates.
\end{proposition}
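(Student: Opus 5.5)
We integrate \eqref{eq:Y-equation} along a fixed mass trajectory as a linear ODE in $\Y$, and throw away every term with a favorable sign. The integrating factor is
\[
E(s,t):=\exp\Bigl(-\int_s^t\bigl(F_1-2\U\bigr)\dd\tau\Bigr).
\]
We have $F_1\ge0$, and Proposition~\ref{prop:angular-bounds} gives $\U_+\le C_T$. Hence $\exp(-\int_s^tF_1)\le1$ and $\exp(2\int_s^t\U)\le e^{2C_T T}$, so $E(s,t)\le e^{2C_TT}$ uniformly in the label and in $0\le s\le t<T$. This uses only the upper bound on $\U$ and no lower bound for $\rho$. Duhamel's formula then gives
\[
\Y(t,m)\le E(0,t)\Y_0(m)+\int_0^tE(s,t)\,G_+(s,m)\dd s ,
\]
where $G$ is the right-hand side of \eqref{eq:Y-equation}. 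The term $-\frac{F_1'}{\mu'}(v-u)^2$ is nonpositive because $F_1'\ge0$ and $\mu'\ge0$, so it is dropped. The quantity $\Y_0$ is bounded by the smooth positive initial data.

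Next we bound $G_+$ term by term. First, $F_1F_2$: in the case $\gamma-\alpha>1$ it is bounded by $C\overline\rho_T^{2\gamma-2\alpha-1}$. In the case $0<\gamma-\alpha<1$ it equals a positive multiple of $\rho^{2(\gamma-\alpha)-1}$. This is at most $C\overline\rho_T^{\gamma-\alpha}\rho^{\gamma-\alpha-1}$, which is the integrand in \eqref{eq:Y-path-sublinear}. In the case $\gamma-\alpha=1$ it equals $c\rho\log\rho$, which is bounded.

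The term $-2\U(F_1/\rho+F_2)$ is the delicate one, since $\U$ has no pointwise lower bound. Write $-2\U=-2\U_++2\U_-$. Then
\[
-2\U\Bigl(\frac{F_1}{\rho}+F_2\Bigr)\le 2\U_-\Bigl(\frac{F_1}{\rho}+F_2\Bigr)_++2\U_+\Bigl(\frac{F_1}{\rho}+F_2\Bigr)_-.
\]
Here $F_1/\rho=c\rho^{\gamma-\alpha-1}$.

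\emph{Case $\gamma-\alpha>1$.} Both $F_1/\rho$ and $F_2$ are bounded by powers of $\overline\rho_T$. The first piece is then controlled by $\int_0^t\U_-\,\dd s\le C_T$ from \eqref{eq:path-angular-u-minus}, and the second by $\U_+\le C_T$.

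\emph{Case $\gamma-\alpha=1$.} Here $F_1/\rho$ is constant and $F_2=c\log\rho$. The positive part of $F_1/\rho+F_2$ is bounded, so that piece is again controlled by \eqref{eq:path-angular-u-minus}. The negative part is at most $C\log(1+1/\rho)$; multiplied by $\U_+\le C_T$, it gives \eqref{eq:Y-path-log}.

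\emph{Case $0<\gamma-\alpha<1$.} Now $F_2=\frac{\gamma}{\alpha(\gamma-\alpha-1)}\rho^{\gamma-\alpha-1}$ is negative. Combined with $F_1/\rho=\frac\gamma\alpha\rho^{\gamma-\alpha-1}$, we get
\[
\frac{F_1}{\rho}+F_2=\frac{\gamma}{\alpha}\,\frac{\gamma-\alpha}{\gamma-\alpha-1}\,\rho^{\gamma-\alpha-1}<0 .
\]
So the $\U_-$ piece is favorable and is dropped. The $\U_+$ piece is at most $C_T\rho^{\gamma-\alpha-1}$, which gives \eqref{eq:Y-path-sublinear}.

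\textbf{Main obstacle.} This sign bookkeeping is the step I expect to be the hardest. The point is that the potentially unbounded factor $\rho^{\gamma-\alpha-1}$ always ends up multiplying $\U_+$, which is bounded. It must never end up multiplying $\U_-$, for which we only have time-integrability along the path.

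Multiplying all these bounds by $E(s,t)\le e^{2C_TT}$ yields the pathwise bounds \eqref{eq:Y-path-sublinear}--\eqref{eq:Y-path-uniform}, with constants uniform in $m$. For the corollary bounds, note that $\rho(s,m)^{\gamma-\alpha-1}\le\Q(s)^{1-(\gamma-\alpha)}$ and $\log(1+1/\rho)\le\log(1+\Q)$ at every label. Taking the supremum over $m$, which is equivalent to the supremum over $r$, gives \eqref{eq:Y-bound-sublinear}--\eqref{eq:Y-bound-uniform}.

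Some care is needed at the center label $m=0$, where $\U$ is understood as its smooth limit. Both the ODE along that trajectory and \eqref{eq:path-angular-u-minus} remain valid there, as stated in Proposition~\ref{prop:angular-bounds}.
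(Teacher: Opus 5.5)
Your proposal is correct and follows the paper's proof step for step. The shared steps are the integrating kernel bounded by $e^{2C_TT}$ (from $\U_+\le C_T$ and $F_1\ge0$), dropping the nonpositive square term, and splitting $-2\U A\le 2\U_+A_-+2\U_-A_+$ with $A=F_1/\rho+F_2$, so that the possibly singular factor is paired with $\U_+$ and only bounded factors with the path-integrable $\U_-$. One small slip: for $0<\gamma-\alpha<1$, $F_1F_2$ is a \emph{negative} multiple of $\rho^{2(\gamma-\alpha)-1}$ (since $F_2<0$), so it can simply be dropped as the paper does; your upper bound $C\overline\rho_T^{\gamma-\alpha}\rho^{\gamma-\alpha-1}$ is nonetheless still valid.
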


\begin{proof}
	Along a fixed mass label, the integrating kernel associated with
	\eqref{eq:Y-equation} is
	\[
	K(s,t;m):=
	\exp\left(
	-\int_s^t\left(F_1-2\U\right)
	(\tau,r(\tau,m))\dd\tau
	\right).
	\]
	For $m=0$ and $m=M$, the same formula is understood through the smooth
	center and boundary traces.  Since $\U_+\le C_T$ by
	Proposition~\ref{prop:angular-bounds} and $F_1\ge0$, we have
	\begin{equation}
		\label{eq:kernel-bound}
		0<K(s,t;m)\le e^{2C_TT}.
	\end{equation}
	The corresponding exact representation is
	\begin{equation}
		\label{eq:Y-representation}
		\begin{aligned}
			\Y(t,m)
			&=K(0,t;m)\Y(0,m)\\
			&\quad+\int_0^tK(s,t;m)
			\left[
			F_1F_2-2\U \left(\frac{F_1}{\rho}+F_2\right)-\frac{F_1'}{\mu'}(v-u)^2
			\right](s,m)\dd s.
		\end{aligned}
	\end{equation}
	The last term in \eqref{eq:Y-equation} is nonpositive because
	\begin{equation}
		\label{eq:Fprime-muprime}
		\frac{F_1'(\rho)}{\mu'(\rho)}
		=\frac{\gamma(\gamma-\alpha)}{\alpha^2}
		\rho^{(\gamma-\alpha)-\alpha}\ge0.
	\end{equation}
	Define
	\begin{equation}
		\label{eq:def-A}
		A(\rho):=\frac{F_1(\rho)}{\rho}+F_2(\rho).
	\end{equation}
	If $\gamma-\alpha\ne1$, then
	\begin{equation}
		\label{eq:A-power}
		A(\rho)
		=\frac{\gamma(\gamma-\alpha)}
		{\alpha(\gamma-\alpha-1)}
		\rho^{\gamma-\alpha-1},
	\end{equation}
	whereas for $\gamma-\alpha=1$,
	\begin{equation}
		\label{eq:A-log}
		A(\rho)=\frac{\gamma}{\alpha}(1+\log\rho).
	\end{equation}
	
	Suppose first that $0<\gamma-\alpha<1$.  Then $F_1F_2\le0$ and $A<0$.
	Consequently,
	\[
	\pos{-2\U A}
	\le2\U_+|A|
	\le C_T\rho^{\gamma-\alpha-1}.
	\]
	The other two source terms in \eqref{eq:Y-representation} are
	nonpositive.  Hence \eqref{eq:Y-representation} and
	\eqref{eq:kernel-bound} give, along every fixed mass label $m$,
	\[
	\begin{aligned}
		\Y(t,m)
		&\le K(0,t;m)\Y_+(0,m)
		+C_T\int_0^t
		K(s,t;m)\rho(s,m)^{\gamma-\alpha-1}\dd s\\
		&\le C_T+C_T\int_0^t
		\rho(s,m)^{\gamma-\alpha-1}\dd s.
	\end{aligned}
	\]
	Here the last constant absorbs the uniformly bounded initial contribution
	and the kernel bound.  This proves \eqref{eq:Y-path-sublinear}.  Since
	\[
	\rho(s,m)^{\gamma-\alpha-1}
	=\rho(s,m)^{-(1-(\gamma-\alpha))}
	\le\Q(s)^{1-(\gamma-\alpha)},
	\]
	taking the supremum over $m\in[0,M]$, equivalently over $r\in[0,R]$,
	proves \eqref{eq:Y-bound-sublinear}.
	
	If $\gamma-\alpha=1$, the positive part of
	\[
	F_1F_2=\frac{\gamma^2}{\alpha^2}\rho\log\rho
	\]
	is bounded in terms of $\overline\rho_T$.  Equations \eqref{eq:upper-density} and \eqref{eq:A-log} also give
	\[
	A_+(\rho)\le C_{\overline\rho_T},
	\qquad
	A_-(\rho)\le C\log(1+\rho^{-1}).
	\]
	Thus
	\[
	\pos{-2\U A}
	\le2\U_+A_-
	+2\U_-A_+
	\le C_T\log(1+\rho^{-1})
	+C\U_-.
	\]
	Since the square term in \eqref{eq:Y-representation} is nonpositive,
	\eqref{eq:Y-representation} and \eqref{eq:kernel-bound} therefore give,
	along every fixed mass label $m$,
	\[
	\begin{aligned}
		\Y(t,m)
		&\le C_T
		+C_T\int_0^t
		\left[1+\log\left(1+\frac1{\rho(s,m)}\right)
		+\U_-(s,r(s,m))\right]\dd s.
	\end{aligned}
	\]
	Here the constant term absorbs the uniformly bounded initial contribution.
	Because $t<T$ and \eqref{eq:path-angular-u-minus} gives
	\[
	\int_0^t\U_-(s,r(s,m))\dd s\le C_T
	\]
	uniformly in $m$, we conclude that
	\[
	\Y(t,m)
	\le C_T+C_T\int_0^t
	\log\left(1+\frac1{\rho(s,m)}\right)\dd s.
	\]
	This proves \eqref{eq:Y-path-log}.  Since
	$\rho(s,m)^{-1}\le\Q(s)$,
	taking the supremum over $m\in[0,M]$, equivalently over
	$r\in[0,R]$, proves \eqref{eq:Y-bound-log}.
	
	Finally, if $\gamma-\alpha>1$, both $F_1F_2$ and $A$ are nonnegative and
	uniformly bounded above by \eqref{eq:upper-density}.  Hence
	\[
	\pos{-2\U A}
	\le C\left(\U\right)_-.
	\]
	Another use of \eqref{eq:path-angular-u-minus} proves
	\eqref{eq:Y-path-uniform}, uniformly in $m$.  Taking the supremum proves
	\eqref{eq:Y-bound-uniform}.
	
	In all three cases, the initial contribution is finite because the smooth
	initial density is strictly positive and
	\[
	\max_{0\le r\le R}
	\left(\frac{(v_0)_r}{\rho_0}+F_2(\rho_0)\right)_+<\infty.
	\]
	This completes the proof.
\end{proof}

\subsection{The maximum principle for reciprocal density}
Let
\[
	z:=\frac1\rho.
\]
Then
\begin{equation}
	\label{eq:reciprocal-equation}
	\Dt z
	=z\omega
	+\alpha z^{1-\alpha}
	\left(z_{rr}+\frac2rz_r\right)
	-\alpha^2z^{-\alpha}z_r^2.
\end{equation}
By \eqref{eq:def-Y} and \eqref{eq:angular-divergences},
\[
	z\omega
	=\frac{v_r}{\rho}+2\frac{v}{r\rho}
	=\Y-F_2(\rho)+2\V z.
\]
Therefore \eqref{eq:reciprocal-equation} is equivalently
\begin{equation}
	\label{eq:reciprocal-Y-equation}
	\Dt z
	=\Y-F_2(\rho)+2\V z
	+\alpha z^{1-\alpha}
	\left(z_{rr}+\frac2rz_r\right)
	-\alpha^2z^{-\alpha}z_r^2.
\end{equation}

We also record the corresponding equation in the outward mass coordinate.
At fixed $m$, time differentiation is material differentiation, and
\eqref{eq:mass-coordinate} gives
\begin{equation}
	\label{eq:reciprocal-mass-derivatives}
	m_r=\rho r^2=\frac{r^2}{z},
	\qquad
	\partial_r=\frac{r^2}{z}\partial_m,
	\qquad
	r_m=\frac{z}{r^2}.
\end{equation}
By direct calculation, we also have
\begin{equation}
	\label{eq:reciprocal-mass-equation}
	\begin{aligned}
	\left.z_t\right|_m
	={}&\Y-F_2(\rho)+2\V z
	+\alpha r^4z^{-\alpha-1}z_{mm}
	+4\alpha rz^{-\alpha}z_m\\
	&-\alpha(\alpha+1)r^4z^{-\alpha-2}z_m^2.
	\end{aligned}
\end{equation}
Writing $V(t):=v(t,R)$, at the boundary,
\begin{equation}
	\label{eq:wall-zr}
	z_r(t,R)=-\frac{V(t)}{\alpha}z(t,R)^\alpha.
\end{equation}

\begin{proposition}[Maximum inequality for nonnegative boundary trace]
	\label{prop:Q-inequality}
	Under the hypotheses of Theorem~\ref{thm:main},
	\begin{equation}
		\label{eq:Q-master}
		D^+\Q(t)
		\le
		\sup_{0\le r\le R}\Y(t,r)
		-F_2(\Q(t)^{-1})
		+2C_T\Q(t).
	\end{equation}
\end{proposition}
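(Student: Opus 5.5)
We apply Lemma~\ref{lem:Dini} to $f(t,r)=z(t,r)=1/\rho(t,r)$ on the compact set $K=[0,R]$. The data are classical and positive on each compact subinterval, so $z$ and $z_t$ are continuous. The lemma gives
\[
D^+\Q(t)\le\max_{r\in\operatorname{argmax} z(t,\cdot)} z_t(t,r).
\]
In fact we need the material derivative rather than $\partial_t$ at fixed $r$. We therefore apply the lemma in the mass coordinate, with $f(t,m)=z(t,r(t,m))$ on $K=[0,M]$. Since $r\mapsto m$ is a bijection, $\Q(t)=\max_m f(t,m)$, and $f_t$ is the material derivative given by \eqref{eq:reciprocal-Y-equation}, or equivalently by \eqref{eq:reciprocal-mass-equation}. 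It then suffices to bound $\Dt z$ at every maximizer $r_0$ of $z(t,\cdot)$ by the right-hand side of \eqref{eq:Q-master}.

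At a maximizer the zeroth-order terms are handled directly. We have $\Y\le\sup_r\Y$, and $-F_2(\rho)=-F_2(\Q^{-1})$ because $z(t,r_0)=\Q$. The term $2\V z$ is at most $2\V_+\Q\le 2C_T\Q$ by Proposition~\ref{prop:angular-bounds}. The remaining step is to show that the diffusion part
\[
\alpha z^{1-\alpha}\Bigl(z_{rr}+\tfrac2r z_r\Bigr)-\alpha^2z^{-\alpha}z_r^2
\]
is nonpositive at the maximizer. We split into three cases by the location of $r_0$.

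For an interior maximizer $0<r_0<R$, we have $z_r=0$ and $z_{rr}\le0$, so the diffusion part is nonpositive.

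At the center $r_0=0$, radial smoothness gives $z_r(t,0)=0$ and $\tfrac2r z_r\to 2z_{rr}(t,0)$. The operator therefore equals $3\alpha z^{1-\alpha}z_{rr}(t,0)$, and this is $\le0$ because $z_{rr}(t,0)\le0$ at a maximum. In the mass coordinate the corresponding $m=0$ terms in \eqref{eq:reciprocal-mass-equation} are read through this smooth limit.

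The boundary maximizer $r_0=R$ is where $v_0(R)\ge0$ is used, and it is the main obstacle. By \eqref{eq:intro-wall-ode}, $V(t)=v(t,R)=v_0(R)\exp(-\int_0^tF_1)\ge0$. Then \eqref{eq:wall-zr} gives $z_r(t,R)\le0$. At a maximum over $[0,R]$ attained at the right endpoint we also have $z_r(t,R)\ge0$, so $z_r(t,R)=0$. Here $V(t)=0$, which forces $v_0(R)=0$.

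With $z_r(t,R)=0$ and $r_0=R$ a maximum, the one-sided Taylor expansion gives $z_{rr}(t,R)\le0$. The argument is as follows. If $z_{rr}(t,R)>0$, then $z(t,r)=z(t,R)+\tfrac12 z_{rr}(t,R)(r-R)^2+o((r-R)^2)>z(t,R)$ for $r$ slightly less than $R$, contradicting maximality. This requires $z(t,\cdot)\in C^2$ up to $r=R$, which holds since $\rho\in H^3$ in 3D, giving $C^{1,1/2}$ in the radial variable. If only $C^{1,\beta}$ regularity is available, we argue instead with a limit of interior points, or we use the trace of $z_{rr}$.

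Hence in all three cases the diffusion part is $\le0$. Combining this with the zeroth-order bounds and taking the maximum over the argmax gives \eqref{eq:Q-master}. The delicate points are the justification of $z_{rr}\le0$ at the endpoint under the available regularity, and the consistent use of the mass-coordinate version of Lemma~\ref{lem:Dini} so that the center and boundary traces are included.
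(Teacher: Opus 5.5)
Your proof is correct and follows essentially the same route as the paper: the same case analysis of interior, center and boundary maximizers (with $v_0(R)\ge0$ and \eqref{eq:wall-zr} forcing $z_r(t,R)=0$, hence $z_{rr}(t,R)\le0$, at any boundary maximum), followed by the same bounds on $\Y$, $-F_2$ and $2\V z$. The only difference is cosmetic: you apply Lemma~\ref{lem:Dini} in the mass coordinate so that $f_t$ is already the material derivative, whereas the paper applies it at fixed $r$ and observes that $u z_r=0$ at every maximizer. One correction to your regularity remark: $C^2$ control up to $r=R$ does not come from the three-dimensional embedding $H^3\subset C^{1,1/2}$, which by itself would not suffice. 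It comes from the fact that away from the center the radial profile of an $H^3(B_R)$ function lies in $H^3(R/2,R)\subset C^{2,1/2}([R/2,R])$, so your fallback alternatives are unnecessary.
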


\begin{proof}
	Fix $t<T$ and consider a maximizer of $z(t,\cdot)$.  We first verify that
	every possible maximizing location has the favorable diffusion sign.
	
	If $V(0)>0$, the boundary formula
	\eqref{eq:intro-wall-ode} and
	\eqref{eq:wall-zr} give $z_r(t,R)<0$.  A differentiable function attaining a
	maximum at the right endpoint must have left derivative at least zero.
	Therefore the boundary cannot maximize $z$ in the strict good boundary case.
	
	If $V(0)=0$, then $V(t)=0$ and $z_r(t,R)=0$.  If the boundary maximizes $z$,
	Taylor expansion from the left gives
	\[
	z(t,R-\delta)
	=z(t,R)+\frac12z_{rr}(t,R)\delta^2+o(\delta^2)
	\le z(t,R),
	\]
	so $z_{rr}(t,R)\le0$.  Moreover $u(t,R)=v(t,R)=0$, hence
	\[
	\frac{u(t,R)}{R}=\frac{v(t,R)}{R}=0.
	\]
	
	At an interior maximizer, $z_r=0$ and $z_{rr}\le0$.  At a center maximizer,
	smooth radial parity gives $z_r(t,0)=0$, $z_{rr}(t,0)\le0$, and
	\[
	\lim_{r\downarrow0}
	\left(z_{rr}+\frac2rz_r\right)
	=3z_{rr}(t,0)\le0.
	\]
	Thus the last two terms of \eqref{eq:reciprocal-equation} are nonpositive at
	every admissible maximizer.  Also $u z_r=0$ there: this follows from $z_r=0$
	at an interior or center maximum and from $u=0$ at the boundary.  Hence
	\[
	z_t\le z\omega=\frac{\omega}{\rho}
	\]
	at every maximizer.  Define the set of current maximizers by
	\[
	\mathcal A(t):=
	\left\{r\in[0,R]:z(t,r)=\Q(t)\right\}.
	\]
	Applying Lemma~\ref{lem:Dini} and then the preceding pointwise estimate gives
	\[
	D^+\Q(t)
	\le \max_{r\in\mathcal A(t)}z_t(t,r)
	\le \max_{r\in\mathcal A(t)}\frac{\omega(t,r)}{\rho(t,r)}.
	\]
	
	Finally, by \eqref{eq:def-Y} and \eqref{eq:angular-divergences},
	\[
	\frac{\omega}{\rho}
	=\Y-F_2(\rho)+2\frac{v}{r}z.
	\]
	At every maximizer, $z=\Q$ and $\rho=\Q^{-1}$.  Since
	\[
	\frac{v}{r}\le\left(\frac{v}{r}\right)_+\le C_T
	\]
	by Proposition~\ref{prop:angular-bounds}.  Consequently, for every
	$r\in\mathcal A(t)$,
	\[
	\begin{aligned}
		\frac{\omega(t,r)}{\rho(t,r)}
		&=\Y(t,r)-F_2\left(\Q(t)^{-1}\right)
		+2\frac{v(t,r)}{r}\Q(t)\\
		&\le \Y(t,r)-F_2\left(\Q(t)^{-1}\right)
		+2C_T\Q(t)\\
		&\le \sup_{0\le s\le R}\Y(t,s)
		-F_2\left(\Q(t)^{-1}\right)+2C_T\Q(t).
	\end{aligned}
	\]
	Taking the maximum over $r\in\mathcal A(t)$ in the preceding Dini estimate
	therefore gives \eqref{eq:Q-master}.
\end{proof}

\subsection{Proof of the density lower bound}
\begin{proof}[Proof of Theorem~\ref{thm:main}]
	We close \eqref{eq:Q-master} separately in the three possible regimes for
	$\gamma-\alpha$.
	
	\smallskip
	\noindent\emph{Case 1: $0<\gamma-\alpha<1$.}
	Here
	\[
	-F_2(\Q^{-1})
	=\frac{\gamma}{\alpha(1-(\gamma-\alpha))}
	\Q^{1-(\gamma-\alpha)}.
	\]
	Combining Proposition~\ref{prop:Q-inequality} with
	\eqref{eq:Y-bound-sublinear} gives
	\begin{equation}
		\label{eq:Q-sublinear}
		D^+\Q(t)
		\le C_T(1+\Q(t))
		+C_T\Q(t)^{1-(\gamma-\alpha)}
		+C_T\int_0^t\Q(s)^{1-(\gamma-\alpha)}\dd s.
	\end{equation}
	Since $0<1-(\gamma-\alpha)<1$,
	$x^{1-(\gamma-\alpha)}\le1+x$ for $x\ge0$.  Applying this
	inequality both to the pointwise term and inside the time integral in
	\eqref{eq:Q-sublinear}, and then increasing $C_T$, gives
	\[
	\begin{aligned}
		D^+\Q(t)
		&\le C_T(1+\Q(t))
		+C_T\int_0^t(1+\Q(s))\dd s.
	\end{aligned}
	\]
	Define
	\[
	G(t):=1+\Q(t)+\int_0^t(1+\Q(s))\dd s.
	\]
	Thus $D^+\Q(t)\le C_TG(t)$.  Since $\Q$ is continuous, the integral
	in the definition of $G$ is differentiable.  Adding a differentiable
	function to an upper right Dini derivative therefore yields
	\[
	\begin{aligned}
		D^+G(t)
		&=D^+\Q(t)+1+\Q(t)\\
		&\le C_TG(t)+1+\Q(t)\\
		&\le (C_T+1)G(t).
	\end{aligned}
	\]
	After increasing $C_T$ once more, this is
	\[
	D^+G(t)\le C_TG(t).
	\]
	Lemma~\ref{lem:Dini-Gronwall} yields
	$G(t)\le G(0)e^{C_Tt}$ for every $t<T$.
	
	\smallskip
	\noindent\emph{Case 2: $\gamma-\alpha=1$.}
	At a maximum of the reciprocal density,
	\[
	-F_2(\Q^{-1})=\frac{\gamma}{\alpha}\log\Q
	\le C\log(1+\Q).
	\]
	Using \eqref{eq:Y-bound-log} in \eqref{eq:Q-master}, followed by
	$\log(1+x)\le x$, gives the same inequality
	\[
	D^+\Q(t)
	\le C_T\left(1+\Q(t)
	+\int_0^t(1+\Q(s))\dd s\right).
	\]
	The preceding function $G$ is therefore bounded by
	Lemma~\ref{lem:Dini-Gronwall}.
	
	\smallskip
	\noindent\emph{Case 3: $\gamma-\alpha>1$.}
	In this range $F_2>0$, so the $-F_2$ term in \eqref{eq:Q-master} is
	nonpositive.  Estimate \eqref{eq:Y-bound-uniform} gives
	\[
	D^+\Q(t)\le C_T(1+\Q(t)).
	\]
	Applying Lemma~\ref{lem:Dini-Gronwall} to $1+\Q$ again bounds $\Q$
	uniformly on $[0,T)$.
	
	In every case,
	\[
	\sup_{0\le t<T}\Q(t)\le C_T'<\infty.
	\]
	Since $\Q(t)=\|\rho(t,\cdot)^{-1}\|_{L^\infty(0,R)}$, setting
	$c_T=(C_T')^{-1}$ proves \eqref{eq:main-conclusion}.
\end{proof}

\begin{proof}[Proof of Corollary~\ref{cor:global-good-wall}]
	Let $[0,T^*)$ be the maximal lifespan of the positive classical solution.  Suppose for
	contradiction that $T^*<\infty$.  Proposition~\ref{prop:bounded-time},
	applied with any fixed $T_0>T^*$, gives
	\[
	\sup_{0\le t<T^*}\norm{\rho(t)}_{L^\infty(B_R)}<\infty.
	\]
	Theorem~\ref{thm:main} and $v_0(R)\ge0$ therefore imply
	\[
	\inf_{0\le t<T^*}\min_{\overline{B_R}}\rho(t)>0.
	\]
	This contradicts the criterion for density loss
	\eqref{eq:bounded-time-density-loss}.  Hence $T^*=\infty$.

	For an arbitrary finite $T_0>0$, apply
	Proposition~\ref{prop:bounded-time} with the slightly larger time bound
	$T_0+1$ to obtain the upper bound through time $T_0$.  Theorem~\ref{thm:main}
	then gives the corresponding lower bound.  This proves the asserted
	two-sided estimate on every bounded time interval.
\end{proof}

\subsection{Interior vacuum localization}

Although the preceding full-ball lower bound uses the nonnegative boundary trace, the angular estimates in Proposition~\ref{prop:angular-bounds} and the pathwise effective-pressure estimates in Proposition~\ref{prop:Y-bounds} do not use that sign.  We now combine those estimates with an inward-mass weight which forces the reciprocal-density variable to vanish at the boundary.

Recall the inward mass distance $\delta=M-m$ and set
\begin{equation}
	\label{eq:def-weighted-reciprocal}
	\phi(m):=\delta^{1/\alpha},
	\qquad
	X(t,m):=\phi(m)z(t,m)
	=\frac{(M-m)^{1/\alpha}}{\rho(t,m)}.
\end{equation}
Because $m$ is a material coordinate, both $\delta$ and $\phi$ are independent
of time along a fixed mass label.  Moreover,
\begin{equation}
	\label{eq:weighted-reciprocal-wall}
	X(t,M)=0
\end{equation}
for every $t<T$: the density is positive at each preterminal time, while
$\phi(M)=0$.  Thus a positive maximum of $X$ cannot occur at the boundary,
regardless of the sign of $v_0(R)$.

\begin{proposition}[Critical weighted maximum inequality]
	\label{prop:weighted-reciprocal-maximum}
	At every positive interior maximum of $X$, and also at a maximum at the
	center, one has
	\begin{equation}
		\label{eq:weighted-reciprocal-maximum}
		\left.X_t\right|_m
		\le
		\phi\Y-\phi F_2(\rho)+2\V X+4rX^{1-\alpha}.
	\end{equation}
\end{proposition}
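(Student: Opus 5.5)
The plan is to derive \eqref{eq:weighted-reciprocal-maximum} by multiplying the mass-coordinate equation \eqref{eq:reciprocal-mass-equation} by the time-independent weight $\phi(m)=\delta^{1/\alpha}$. At a maximum of $X$ we will rewrite $z_m$ and $z_{mm}$ in terms of $X$ and its derivatives. Since $m$ is material, $X_t|_m=\phi\,z_t|_m$, and the non-diffusive part of \eqref{eq:reciprocal-mass-equation} contributes exactly $\phi\Y-\phi F_2(\rho)+2\V\phi z=\phi\Y-\phi F_2(\rho)+2\V X$. Everything therefore reduces to bounding
\[
\phi\Bigl(\alpha r^4z^{-\alpha-1}z_{mm}+4\alpha rz^{-\alpha}z_m-\alpha(\alpha+1)r^4z^{-\alpha-2}z_m^2\Bigr)
\]
at the maximizer.

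\textbf{Interior maximum.} Take $0<m<M$, so that $\delta>0$. We use
\[
\phi'=-\tfrac1\alpha\delta^{1/\alpha-1},\qquad \frac{\phi'}{\phi}=-\frac1{\alpha\delta},\qquad \frac{\phi''}{\phi}=\frac1\alpha\Bigl(\frac1\alpha-1\Bigr)\delta^{-2}.
\]
From $z=X/\phi$ and $X_m=0$ we get
\[
z_m=-X\phi'/\phi^2=\frac{z}{\alpha\delta}.
\]
From $X_{mm}\le0$ we get
\[
z_{mm}=\frac{X_{mm}}{\phi}+X\Bigl(\frac{2\phi'^2}{\phi^3}-\frac{\phi''}{\phi^2}\Bigr)\le z\,\delta^{-2}\Bigl(\frac1{\alpha^2}+\frac1\alpha\Bigr).
\]
Substituting these into the two second-order terms, which enter with positive coefficients $\alpha r^4z^{-\alpha-1}$ and $-\alpha(\alpha+1)r^4z^{-\alpha-2}$, gives
\[
\alpha r^4z^{-\alpha}\delta^{-2}\Bigl[\frac1{\alpha^2}+\frac1\alpha-\frac{\alpha+1}{\alpha^2}\Bigr]=0.
\]
This exact cancellation is the reason the exponent $1/\alpha$ is critical, and it is the step I would double-check most carefully. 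The remaining first-order term is
\[
\phi\cdot4\alpha rz^{-\alpha}\frac{z}{\alpha\delta}=4rz^{1-\alpha}\delta^{1/\alpha-1}.
\]
Since $X^{1-\alpha}=\delta^{(1-\alpha)/\alpha}z^{1-\alpha}$, this equals $4rX^{1-\alpha}$, which gives \eqref{eq:weighted-reciprocal-maximum}.

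\textbf{Center maximum.} At $m=0$ the mass-coordinate form is singular, since $r^4z_{mm}$ is indeterminate. Here I would instead work with the radial form \eqref{eq:reciprocal-Y-equation}. Viewed as a function of $r$, $\phi$ is smooth near $r=0$ with $\phi_r=\phi'\rho r^2=O(r^2)$ and $\phi_{rr}(0)=0$. Hence at $r=0$ we have $X_r=\phi z_r$ and $X_{rr}=\phi z_{rr}$, so a maximum of $X$ at the center gives $z_r(t,0)=0$ (also forced by parity) and $z_{rr}(t,0)\le0$. The diffusion term then tends to $3\alpha z^{1-\alpha}z_{rr}(t,0)\le0$ as in the proof of Proposition~\ref{prop:Q-inequality}, the gradient-squared term vanishes, and $4rX^{1-\alpha}=0$ at $r=0$. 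Thus \eqref{eq:weighted-reciprocal-maximum} holds at the center as well, with $\V$ understood as its smooth center limit.

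The main obstacle is the center, where the justification rests on the smooth-limit interpretation and on the parity behaviour of $\phi\circ m$. The interior case is purely algebraic once the cancellation has been verified.
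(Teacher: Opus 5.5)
Your proposal is correct and follows the paper's proof essentially step for step: in the interior case you use $X_m=0$ and $X_{mm}\le0$ to get $z_m=z/(\alpha\delta)$ and $z_{mm}\le\frac1\alpha(1+\frac1\alpha)z\delta^{-2}$. This gives the same exact cancellation of the second-order terms and leaves the same residual $4rX^{1-\alpha}$. At the center the paper also switches to the radial form and obtains $z_{rr}(t,0)\le0$ from the expansion $X=M^{1/\alpha}z(t,0)+\frac12M^{1/\alpha}z_{rr}(t,0)r^2+O(r^3)$, which is equivalent to your observation that $\phi_r=O(r^2)$ and $\phi_{rr}(0)=0$.
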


\begin{proof}
	Consider first an interior mass label $0<m<M$ at which $X$ is maximal.
	Since
	\[
	\phi_m=-\frac{\phi}{\alpha\delta},
	\qquad
	\phi_{mm}
	=\frac1\alpha\left(\frac1\alpha-1\right)
	\frac{\phi}{\delta^2},
	\]
	the equality $X_m=0$ gives
	\[
	0=\phi_mz+\phi z_m
	=-\frac{\phi z}{\alpha\delta}+\phi z_m,
	\]
	and hence
	\begin{equation}
		\label{eq:weighted-max-zm}
		z_m=\frac{z}{\alpha\delta}.
	\end{equation}
	The inequality $X_{mm}\le0$ gives
	\[
	\phi z_{mm}+2\phi_mz_m+\phi_{mm}z\le0.
	\]
	Using \eqref{eq:weighted-max-zm}, we obtain
	\[
	\begin{aligned}
	\phi z_{mm}
	&\le
	\left[
	\frac{2}{\alpha^2}
	-\frac1\alpha\left(\frac1\alpha-1\right)
	\right]
	\frac{\phi z}{\delta^2}\\
	&=\frac1\alpha\left(1+\frac1\alpha\right)
	\frac{\phi z}{\delta^2}.
	\end{aligned}
	\]
	Therefore
	\begin{equation}
		\label{eq:weighted-max-zmm}
		z_{mm}
		\le\frac1\alpha\left(1+\frac1\alpha\right)
		\frac{z}{\delta^2}.
	\end{equation}

	Multiply \eqref{eq:reciprocal-mass-equation} by $\phi$. We have
	\[
	\begin{aligned}
		\left.X_t\right|_m
		={}&
		\phi\Y-\phi F_2(\rho)+2\V X\\
		&+\phi\alpha r^4z^{-\alpha-1}z_{mm}
		+4\alpha r\phi z^{-\alpha}z_m\\
		&-\phi\alpha(\alpha+1)r^4z^{-\alpha-2}z_m^2.
	\end{aligned}
	\]
	At the maximum, the second-mass-derivative term and the negative gradient-square term obey
	\[
	\begin{aligned}
	&\phi\alpha r^4z^{-\alpha-1}z_{mm}
	-\phi\alpha(\alpha+1)r^4z^{-\alpha-2}z_m^2\\
	&\quad\le
	\alpha r^4\phi z^{-\alpha}\delta^{-2}
	\left[
	\frac1\alpha\left(1+\frac1\alpha\right)
	-\frac{\alpha+1}{\alpha^2}
	\right]\\
	&\quad=0.
	\end{aligned}
	\]
	This is the critical cancellation at the exponent $1/\alpha$.  The remaining
	first-mass-derivative term is
	\[
	\begin{aligned}
	\phi\,4\alpha rz^{-\alpha}z_m
	&=4r\frac{\phi z^{1-\alpha}}{\delta}\\
	&=4rX^{1-\alpha},
	\end{aligned}
	\]
	because $\phi^\alpha=\delta$.  The remaining terms in
	\eqref{eq:reciprocal-mass-equation} now give
	\eqref{eq:weighted-reciprocal-maximum} at an interior maximum.

	It remains to check a maximum at the center without differentiating twice
	with respect to the degenerate coordinate $m$.  Smooth radial parity gives
	\[
	m(t,r)=\frac{\rho(t,0)}3r^3+O(r^5),
	\qquad
	\delta(t,r)=M+O(r^3),
	\]
	and
	\[
	z(t,r)=z(t,0)+\frac12z_{rr}(t,0)r^2+O(r^4).
	\]
	Consequently,
	\[
	X(t,r)=M^{1/\alpha}z(t,0)
	+\frac12M^{1/\alpha}z_{rr}(t,0)r^2+O(r^3).
	\]
	If $X$ has a maximum at the center, then $z_{rr}(t,0)\le0$.  The smooth
	radial limit in \eqref{eq:reciprocal-Y-equation} gives
	\[
	z_{rr}+\frac2rz_r\longrightarrow3z_{rr}(t,0)\le0,
	\qquad z_r(t,0)=0.
	\]
	Thus the full diffusion and gradient-square contribution is nonpositive.
	The term $4rX^{1-\alpha}$ in
	\eqref{eq:weighted-reciprocal-maximum} vanishes at $r=0$, and the same upper
	inequality follows at a center maximum.
\end{proof}

Define
\begin{equation}
	\label{eq:def-QX}
	Q_X(t):=\max_{0\le m\le M}X(t,m).
\end{equation}

\begin{proposition}[Uniform critical weighted reciprocal bound]
	\label{prop:uniform-weighted-reciprocal}
	Under the hypotheses of Theorem~\ref{thm:wall-independent}, there exists a
	constant $C_X<\infty$ such that
	\begin{equation}
		\label{eq:QX-bound}
		\sup_{0\le t<T}Q_X(t)\le C_X.
	\end{equation}
	Equivalently,
	\begin{equation}
		\label{eq:weighted-mass-density-lower}
		\rho(t,m)\ge\frac{(M-m)^{1/\alpha}}{C_X}
		\qquad(0\le t<T,\ 0\le m\le M).
	\end{equation}
\end{proposition}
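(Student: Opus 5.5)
The plan is to run the same Dini--Gronwall scheme as in the proof of Theorem~\ref{thm:main}, now applied to $Q_X(t)=\max_m X(t,m)$ instead of $\Q$. The key change is that the weight $\phi=\delta^{1/\alpha}$ is constant along each mass label. This lets the pathwise bounds of Proposition~\ref{prop:Y-bounds} be multiplied by $\phi$ at the same label $m$ and re-expressed through $X$ itself.

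\emph{Step 1: the Dini inequality.} First, $Q_X(t)>0$ for every $t<T$, so by \eqref{eq:weighted-reciprocal-wall} no maximizer lies at $m=M$. Every maximizer is therefore interior or at the center, and Proposition~\ref{prop:weighted-reciprocal-maximum} applies there. Since $\phi$ is time-independent, $X_t=\phi z_t$ is continuous on $[0,t_1]\times[0,M]$, so Lemma~\ref{lem:Dini} gives
\[
D^+Q_X(t)\le\max_{m\in\operatorname{argmax}X(t,\cdot)}\bigl(\phi\Y-\phi F_2(\rho)+2\V X+4rX^{1-\alpha}\bigr)(t,m).
\]
The last two terms are harmless:
\begin{itemize}
\item $\V\le\V_+\le C_T$ by Proposition~\ref{prop:angular-bounds}, so $2\V X\le 2C_TQ_X$.
\item On the set where $Q_X\ge1$, since $\alpha>1$ we have $4rX^{1-\alpha}\le4R$. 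It suffices to work with $\max\{1,Q_X\}$, or to note that $Q_X$ is trivially bounded whenever it is below $1$.
\end{itemize}

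\emph{Step 2: the three regimes for $\gamma-\alpha$.} The aim in each case is to convert $\phi\Y$ and $-\phi F_2(\rho)$ into quantities controlled by $1+X$ at the same label.
\begin{itemize}
\item If $\gamma-\alpha>1$: $F_2>0$ and $\Y\le C_T$ by \eqref{eq:Y-bound-uniform}. Hence $D^+Q_X\le C_T(1+Q_X)$.
\item If $0<\gamma-\alpha<1$: write $\theta=1-(\gamma-\alpha)\in(0,1)$. Then
\[
-\phi F_2(\rho)=c\,\phi z^{\theta}=c\,\phi^{1-\theta}X^{\theta}\le C(1+X).
\]
By \eqref{eq:Y-path-sublinear} at the maximizing label,
\[
\phi\Y(t,m)\le C_T+C_T\int_0^t\phi\,z(s,m)^{\theta}\,ds=C_T+C_T\int_0^t\phi^{1-\theta}X(s,m)^{\theta}\,ds.
\]
This is at most $C_T+C_T\int_0^t(1+Q_X(s))\,ds$. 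The pathwise form at fixed $m$ is essential here: the weight $\phi(m)$ is the same at times $s$ and $t$.
\item If $\gamma-\alpha=1$: use $\log(1+z)=\log(1+X/\phi)\le\log(1+X)+\log(1+\phi^{-1})$. Boundedness of $\phi\log(1+\phi^{-1})$ on $[0,M^{1/\alpha}]$ then gives $\phi\log(1+z)\le C(1+X)$. Applying this both to $-\phi F_2(\rho)=\tfrac{\gamma}{\alpha}\phi\log z$ and inside the time integral of \eqref{eq:Y-path-log} gives the same type of bound.
\end{itemize}

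\emph{Step 3: closing the estimate.} In all cases,
\[
D^+Q_X(t)\le C_T\Bigl(1+Q_X(t)+\int_0^t(1+Q_X(s))\,ds\Bigr).
\]
Set $G=1+Q_X+\int_0^t(1+Q_X)$, exactly as in Case~1 of the proof of Theorem~\ref{thm:main}. Lemma~\ref{lem:Dini-Gronwall} then yields $G(t)\le G(0)e^{C_Tt}$. Here $Q_X(0)\le M^{1/\alpha}\max 1/\rho_0<\infty$. This proves \eqref{eq:QX-bound}, and \eqref{eq:weighted-mass-density-lower} is just a rewriting of it.

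I expect the main obstacle to be Step 2: checking that every density-singular contribution carries enough powers of $\phi$ to be absorbed into $X$. This is clean in the sublinear case because $\phi^{1-\theta}$ is bounded. In the logarithmic case the splitting of the logarithm above has to be done carefully. One must also confirm that the constants in Proposition~\ref{prop:Y-bounds} are uniform in $m$, which they are, so the bounds can be evaluated at whichever label is currently maximizing.
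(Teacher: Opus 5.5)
Your proposal is correct and follows essentially the paper's proof: multiply the pathwise bounds of Proposition~\ref{prop:Y-bounds} by the label-constant weight $\phi(m)$, close a Volterra-type Dini inequality for $\max\{1,Q_X\}$, and apply Lemma~\ref{lem:Dini-Gronwall} to $1+H+\int_0^t(1+H)\,ds$. The differences are minor. In the logarithmic case the paper simply uses $\phi\log(1+z)\le\phi z=X$ and $\phi\log_+z\le X$, so your splitting of the logarithm is valid but unnecessary. Also, the closing inequality in your Step~3 should be stated for $\max\{1,Q_X\}$ rather than for $Q_X$, as you already indicated in Step~1, because $4rX^{1-\alpha}$ is not bounded when $Q_X<1$.
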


\begin{proof}
We first convert the pathwise estimates of
Proposition~\ref{prop:Y-bounds} into estimates involving only $Q_X$.

Suppose that $0<\gamma-\alpha<1$.  Since $z=X/\phi$ and
$\phi=\delta^{1/\alpha}$, one has
\begin{equation}
	\label{eq:weighted-path-power}
	\phi z^{1-(\gamma-\alpha)}
	=\delta^{(\gamma-\alpha)/\alpha}
	X^{1-(\gamma-\alpha)}.
\end{equation}
Multiplying \eqref{eq:Y-path-sublinear} by the time-independent value
$\phi(m)$, using \eqref{eq:weighted-path-power} along the same fixed mass
label, and then using $0\le\delta\le M$ gives
\begin{equation}
	\label{eq:weighted-Y-sublinear}
	\phi\Y(t,m)
	\le C_T+C_T\int_0^t
	Q_X(s)^{1-(\gamma-\alpha)}\dd s.
\end{equation}
In the same range $F_2<0$, and hence
\begin{equation}
	\label{eq:weighted-F2-sublinear}
	\begin{aligned}
	-\phi F_2(\rho)
	&=\frac{\gamma}{\alpha(1-(\gamma-\alpha))}
	\phi z^{1-(\gamma-\alpha)}\\
	&\le C_TX^{1-(\gamma-\alpha)}.
	\end{aligned}
\end{equation}

If $\gamma-\alpha=1$, the elementary inequality
$\log(1+s)\le s$ gives
\begin{equation}
	\label{eq:weighted-path-log}
	\phi\log(1+z)\le\phi z=X.
\end{equation}
Multiplying \eqref{eq:Y-path-log} by $\phi(m)$ and following the same fixed
label therefore gives
\begin{equation}
	\label{eq:weighted-Y-log}
	\phi\Y(t,m)
	\le C_T+C_T\int_0^tQ_X(s)\dd s.
\end{equation}
Moreover, $F_2(\rho)=-(\gamma/\alpha)\log z$, so
\begin{equation}
	\label{eq:weighted-F2-log}
	-\phi F_2(\rho)
	=\frac\gamma\alpha\phi\log z
	\le\frac\gamma\alpha\phi\log_+z
	\le\frac\gamma\alpha X.
\end{equation}

Finally, if $\gamma-\alpha>1$, the pathwise uniform estimate
\eqref{eq:Y-path-uniform} and $0\le\phi\le M^{1/\alpha}$ give
\begin{equation}
	\label{eq:weighted-Y-uniform}
	\phi\Y(t,m)\le C_T.
\end{equation}
In this range $F_2\ge0$, so
\begin{equation}
	\label{eq:weighted-F2-uniform}
	-\phi F_2(\rho)\le0.
\end{equation}

We now close the maximum estimate.  Fix $T'<T$.  Positivity and classical
regularity on $[0,T']\times[0,R]$ make $X$ and its fixed-label time derivative
continuous on the compact mass rectangle, with boundary value zero.  Thus
Lemma~\ref{lem:Dini} and
Proposition~\ref{prop:weighted-reciprocal-maximum} apply even when maximizing
labels are nonunique or switch in time.  Set
\begin{equation}
	\label{eq:def-weighted-H}
	H(t):=\max\{1,Q_X(t)\}.
\end{equation}
When the active maximum satisfies $Q_X(t)\ge1$, the geometric term obeys
\begin{equation}
	\label{eq:weighted-geometric-cap}
	4rX^{1-\alpha}\le4R,
\end{equation}
because $\alpha>1$.  Proposition~\ref{prop:angular-bounds} also gives
\begin{equation}
	\label{eq:weighted-angular-bound}
	2\V X\le2\V_+X\le C_TX.
\end{equation}
When $Q_X(t)<1$, the upper right Dini derivative of $H$ is zero; when
$Q_X(t)=1$, it is bounded above by the positive part of $D^+Q_X(t)$.
Consequently the following nonnegative upper estimates at active maxima
control $D^+H$ in all cases.

For $0<\gamma-\alpha<1$, equations
\eqref{eq:weighted-reciprocal-maximum},
\eqref{eq:weighted-Y-sublinear}--\eqref{eq:weighted-F2-sublinear}, and
\eqref{eq:weighted-geometric-cap}--\eqref{eq:weighted-angular-bound} give
\begin{equation}
	\label{eq:weighted-H-sublinear}
	\begin{aligned}
	D^+H(t)
	\le C_T\Bigg[
	&1+H(t)+H(t)^{1-(\gamma-\alpha)}\\
	&+\int_0^tH(s)^{1-(\gamma-\alpha)}\dd s
	\Bigg].
	\end{aligned}
\end{equation}
Since $0<1-(\gamma-\alpha)<1$, one has
$x^{1-(\gamma-\alpha)}\le1+x$ for $x\ge0$.  Hence
\begin{equation}
	\label{eq:weighted-H-Volterra}
	D^+H(t)
	\le C_T\left[
	1+H(t)+\int_0^t(1+H(s))\dd s
	\right].
\end{equation}
For $\gamma-\alpha=1$, equations
\eqref{eq:weighted-Y-log}--\eqref{eq:weighted-F2-log} yield the same estimate
\eqref{eq:weighted-H-Volterra}.  For $\gamma-\alpha>1$, equations
\eqref{eq:weighted-Y-uniform}--\eqref{eq:weighted-F2-uniform} instead give
\begin{equation}
	\label{eq:weighted-H-linear}
	D^+H(t)\le C_T(1+H(t)).
\end{equation}

In the first two cases define
\[
	G_X(t):=1+H(t)+\int_0^t(1+H(s))\dd s.
\]
After increasing $C_T$, equation \eqref{eq:weighted-H-Volterra} gives
\[
	D^+G_X(t)\le C_TG_X(t).
\]
Lemma~\ref{lem:Dini-Gronwall} bounds $G_X$ on $[0,T']$.  The same lemma,
applied to $1+H$, closes \eqref{eq:weighted-H-linear}.  The constants are
independent of $T'<T$, and therefore letting $T'\uparrow T$ proves
\eqref{eq:QX-bound}.
By \eqref{eq:def-weighted-reciprocal}, this is exactly
\eqref{eq:weighted-reciprocal-bound}, and rearranging it proves
\eqref{eq:weighted-mass-density-lower}.
The weight is differentiated only at interior maximizing labels; its singular
spatial derivative at $m=M$ is never used because
\eqref{eq:weighted-reciprocal-wall} excludes the boundary first.
\end{proof}

\begin{proof}[Proof of Theorem~\ref{thm:wall-independent}]
Proposition~\ref{prop:uniform-weighted-reciprocal} proves
\eqref{eq:weighted-reciprocal-bound}.  It remains to convert
\eqref{eq:weighted-mass-density-lower} into a lower bound
on a fixed Eulerian interior region.  Regard $\delta=M-m$ as the inward mass
coordinate and write $r=r(t,\delta)$.  Equation
\eqref{eq:reciprocal-mass-derivatives} gives
\[
	r_\delta=-\frac{z}{r^2},
\]
and hence
\[
	\frac{\dd}{\dd\delta}r^3=-3z.
\]
Since $r(t,0)=R$, integration from the boundary label $0$ to $\delta$ yields the
exact identity
\begin{equation}
	\label{eq:weighted-mass-volume}
	R^3-r(t,\delta)^3=3\int_0^\delta z(t,s)\dd s.
\end{equation}
By \eqref{eq:QX-bound},
\[
	z(t,s)\le C_Xs^{-1/\alpha}.
\]
Because $\alpha>1$,
\[
	\int_0^\delta s^{-1/\alpha}\dd s
	=\frac\alpha{\alpha-1}\delta^{(\alpha-1)/\alpha},
\]
and therefore
\begin{equation}
	\label{eq:weighted-mass-volume-upper}
	R^3-r(t,\delta)^3
	\le\frac{3\alpha C_X}{\alpha-1}
	\delta^{(\alpha-1)/\alpha}.
\end{equation}

Fix $\varepsilon\in(0,R)$ and set
\[
	D_\varepsilon:=R^3-(R-\varepsilon)^3>0.
\]
If $r(t,\delta)\le R-\varepsilon$, then
$D_\varepsilon\le R^3-r(t,\delta)^3$.  Combining this with
\eqref{eq:weighted-mass-volume-upper} gives
\begin{equation}
	\label{eq:weighted-delta-epsilon}
	\delta\ge\delta_\varepsilon
	:=\left[
	\frac{(\alpha-1)D_\varepsilon}{3\alpha C_X}
	\right]^{\alpha/(\alpha-1)}>0.
\end{equation}
Consequently \eqref{eq:weighted-mass-density-lower} implies
\[
	\rho(t,r)
	\ge\frac{\delta_\varepsilon^{1/\alpha}}{C_X}
	=:c_{T,\varepsilon}>0
\]
for $0\le t<T$ and $0\le r\le R-\varepsilon$.  This proves
\eqref{eq:wall-independent-interior-lower}.

Finally, suppose $t_j\uparrow T$ and $\rho(t_j,r_j)\to0$.  If $r_j$ did not
converge to $R$, a subsequence would satisfy $r_j\le R-\varepsilon$ for some
$\varepsilon>0$, contradicting
\eqref{eq:wall-independent-interior-lower}.  Thus $r_j\to R$.  If
$\rho(t,\cdot)\to\rho_*$ uniformly and $\rho_*$ has a zero, then for every
fixed $r<R$, estimate \eqref{eq:wall-independent-interior-lower}, with for
example $\varepsilon=(R-r)/2$, gives $\rho_*(r)>0$.  Hence the assumed
nonempty zero set can contain only $R$, which proves
\eqref{eq:wall-independent-zero-set}.
\end{proof}

\section{Negative boundary trace: formation of boundary vacuum in finite time}

We now construct the family in Theorem~\ref{thm:bad-wall}.  The small
parameter $k$ sets the initial boundary scale $\rho_{0,k}(R)^\alpha\simeq k$, and the maximal time scale is $k^{(\alpha-1)/\alpha}$.  After obtaining a uniform bound for the transverse strain rate functions over a short time, we pass to inward mass coordinates, in which $W=k^{-1}\rho^\alpha$ and the physical mass flux $F=r^2u$ satisfy a degenerate scalar parabolic equation.  The data create a negative flux plateau; an expanding observation collar preserves that flux long enough to contradict the finite reciprocal capacity of an affine boundary profile.  The final subsection constructs the terminal density trace and proves that its only zero is the boundary.

\subsection{Control of transverse strain rate functions over a short time}

\begin{proposition}[Bound for transverse strain rate functions on a short time interval]
	\label{prop:normalized}
	Let $\alpha>1$, $\gamma>\alpha$, and let $(\rho,u)$ be a positive radial
	classical solution of \eqref{eq:ns-system} on $[0,T)\times B_R$.  Fix
	$t_0\in[0,T)$ and suppose that
	\[
		0<\rho\le\overline\rho
		\qquad\text{on }[t_0,T)\times B_R.
	\]
	Set
	\[
	X_{t_0}=1+\norm{\mathscr U(t_0)}_\infty
	+\norm{\mathscr V(t_0)}_\infty,
	\qquad
	\tau_{\rm ang}=\frac{1}{2C_{\overline\rho}X_{t_0}}.
	\]
	Here $C_{\overline\rho}$ depends only on $\alpha$, $\gamma$, and the density upper bound $\overline\rho$.  Then
	\[
	\norm{\mathscr U(t)}_\infty+\norm{\mathscr V(t)}_\infty
	\le2X_{t_0},
	\qquad t_0\le t<\min\{T,t_0+\tau_{\rm ang}\}.
	\]
\end{proposition}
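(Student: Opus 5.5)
We prove the bound by a two-sided maximum principle for $\U$ combined with the pathwise ODE \eqref{eq:angular-V} for $\V$, closed by a Riccati-type comparison on a short interval. Set
\[
\Phi(t):=\max_{0\le r\le R}\bigl(\abs{\U(t,r)}+\abs{\V(t,r)}\bigr).
\]
Because $\U$ and $\V$ are separate quantities, it is cleaner to work with
\[
A(t):=\max_r\abs{\U(t,\cdot)},\qquad B(t):=\max_r\abs{\V(t,\cdot)}.
\]
The aim is a Dini inequality of the form
\[
D^+(A+B)\le C_{\overline\rho}\,(1+A+B)^2 .
\]
Once this is available, comparison with the ODE $y'=C_{\overline\rho}y^2$, $y(t_0)=X_{t_0}$, shows that
\[
1+A+B\le \frac{X_{t_0}}{1-C_{\overline\rho}X_{t_0}(t-t_0)}\le 2X_{t_0}
\]
for $t-t_0\le\tau_{\rm ang}$. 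This is the claim, after absorbing the harmless $1$ into the constant.

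\textbf{Step 1: $\V$.} The term $B$ is easy. Along each material path \eqref{eq:angular-V} gives
\[
\abs{\V}_t\le \abs{\U}\abs{\V}+F_1(\abs{\V}+\abs{\U}),\qquad 0\le F_1\le F_{\overline\rho}.
\]
Lemma~\ref{lem:Dini}, applied in the mass coordinate on $[0,M]$ where the label set is compact and fixed, then gives
\[
D^+B\le AB+F_{\overline\rho}(A+B).
\]

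\textbf{Step 2: $\U$.} I would use \eqref{eq:normalized-u-equation} directly in $r$, at a maximizer of $\U$ or of $-\U$.

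\emph{Interior or center point.} At an interior extremum we have $\U_r=0$ and $\pm\U_{rr}\le0$. At the center, parity gives $\U_{rr}+\frac4r\U_r\to5\U_{rr}$, which has the same sign. The diffusion term therefore has the favorable sign, and the term $\alpha r\U_r(\V-\U)$ vanishes. What remains is
\[
\pm\U_t\le \abs{\V-\U}\bigl((3\alpha-2)\abs{\U}+F_{\overline\rho}\bigr)+\U^2\le C_{\overline\rho}(1+A+B)^2 .
\]

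\emph{Boundary point.} The boundary needs separate treatment. There $u=0$, so $\U(t,R)=0$, and the boundary can only be an extremum of $\pm\U$ when $A=0$. In that case the estimate is trivial.

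Hence $D^+A\le C_{\overline\rho}(1+A+B)^2$. Adding Step 1 yields the desired inequality. Since $\U$ and $\V$ are differentiable at fixed mass label (Proposition~\ref{prop:local-restart} regularity), Lemma~\ref{lem:Dini} is legitimate. Maxima of $\abs{f}$ are handled as the maximum of $f$ and $-f$ over two copies of the compact label set, as in the proof of Proposition~\ref{prop:angular-bounds}.

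\textbf{Main obstacle.} The delicate point is justifying the maximum-principle step at the center and near the boundary. There the operator $\U_{rr}+\frac4r\U_r$ is singular, and the drift term $\alpha r\U_r(\V-\U)$ must vanish at the extremum. I would treat the center exactly as in \eqref{eq:P-record-center}, via smooth radial parity ($\U_r(t,0)=0$).

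Regularity of $\U=u/r$ up to the center must also be checked. This follows from $u\in H^3$ radial, since $u(t,0)=0$ and $u$ is odd in $r$.

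Finally, the Dini comparison with the Riccati ODE must be carried out on $[t_0,\min\{T,t_0+\tau_{\rm ang}\})$. I would do this with the same $\varepsilon$-perturbation and first-crossing argument used in Proposition~\ref{prop:angular-bounds}, with comparison function
\[
y_\varepsilon(t)=\frac{X_{t_0}+\varepsilon}{1-C(X_{t_0}+\varepsilon)(t-t_0)}
\]
in place of a constant, and then let $\varepsilon\downarrow0$.
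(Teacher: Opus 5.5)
Your proposal is correct and follows essentially the same route as the paper. Both arguments use a two-sided maximum principle for $\mathscr U$ from \eqref{eq:normalized-u-equation}: the center is handled via $5\mathscr U_{rr}(t,0)$, the boundary is excluded since $\mathscr U(t,R)=0$, and the drift vanishes because $\mathscr U_r=0$. This is combined with the pathwise bound for $\abs{\mathscr V}$ from \eqref{eq:angular-V} and closed by $D^+X\le C_{\overline\rho}X^2$ with Riccati comparison. The one small correction is that $1+A+B\le 2X_{t_0}$ already gives the stated bound directly, so no "absorbing of the $1$" is needed.
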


\begin{proof}
	We use an argument based on the maximum principle.  Radial smoothness makes $\mathscr U$ an even function at the center, so $\mathscr U_r(t,0)=0$.  If a spatial maximum or minimum of $\mathscr U$ occurs at $r=0$, then the limiting value of its radial diffusion operator is
	\[
	\lim_{r\downarrow0}\left(
	\mathscr U_{rr}+\frac4r\mathscr U_r\right)
	=5\mathscr U_{rr}(t,0)\le (\ge)0.
	\]
	The same diffusion operator is nonpositive at an interior maximum and
	nonnegative at an interior minimum.  A nonzero extremum of $\abs{\mathscr U}$ cannot occur at the outer boundary because the boundary condition gives $\mathscr U(t,R)=0$.  Thus the diffusion has the favorable sign at every spatial record relevant to $\norm{\mathscr U(t)}_\infty$.
	
	Since $\gamma-\alpha>0$, the density upper bound implies
	\[
	0\le F_1(\rho)
	\le\frac\gamma\alpha\overline\rho^{\gamma-\alpha}.
	\]
	Define
	\[
	M_+(t)=\max_{0\le r\le R}\mathscr U(t,r),
	\qquad
	M_-(t)=-\min_{0\le r\le R}\mathscr U(t,r).
	\]
	Because $\mathscr U(t,R)=0$, both $M_+(t)$ and $M_-(t)$ are nonnegative, and
	\[
	\norm{\mathscr U(t)}_\infty=\max\{M_+(t),M_-(t)\}.
	\]
	
	First suppose that $r_+$ is a point at which a positive maximum $M_+(t)$ is attained.  At a center or interior record, $\mathscr U_r(t,r_+)=0$, so the transport part of the material derivative vanishes:
	\[
	\Dt\mathscr U(t,r_+)
	=\partial_t\mathscr U(t,r_+).
	\]
	The diffusion term in \eqref{eq:normalized-u-equation} is nonpositive there by the preceding calculation.  The term $\alpha r\mathscr U_r$ also vanishes.  Therefore, evaluating \eqref{eq:normalized-u-equation} at $r_+$ gives
	\[
	\begin{aligned}
		\partial_t\mathscr U(t,r_+)
		&\le
		\bigl(\mathscr V(t,r_+)-\mathscr U(t,r_+)\bigr)
		\bigl((3\alpha-2)\mathscr U(t,r_+)-F_1(\rho(t,r_+))\bigr)
		-\mathscr U(t,r_+)^2
		\\
		&\le
		\left(\norm{\mathscr V(t)}_\infty
		+\norm{\mathscr U(t)}_\infty\right)
		\left((3\alpha-2)\norm{\mathscr U(t)}_\infty
		+\frac\gamma\alpha\overline\rho^{\gamma-\alpha}\right)
		\\
		&\le C_{\overline\rho}
		\left(1+\norm{\mathscr U(t)}_\infty
		+\norm{\mathscr V(t)}_\infty\right)^2.
	\end{aligned}
	\]
	
	Next suppose that $r_-$ is a point at which a negative minimum is attained, so $M_-(t)=-\mathscr U(t,r_-)$.  Again $\Dt\mathscr U(t,r_-)=\partial_t\mathscr U(t,r_-)$ and $\mathscr U_r(t,r_-)=0$.  At this minimum the diffusion term in the equation for $\mathscr U$ is nonnegative.  After multiplying the equation by $-1$ to estimate $-\mathscr U$, its diffusion contribution is nonpositive.  Hence
	\[
	\begin{aligned}
		-\partial_t\mathscr U(t,r_-)
		&\le
		\abs{\mathscr V(t,r_-)-\mathscr U(t,r_-)}
		\left((3\alpha-2)\abs{\mathscr U(t,r_-)}
		+F_1(\rho(t,r_-))\right)
		+\abs{\mathscr U(t,r_-)}^2
		\\
		&\le C_{\overline\rho}
		\left(1+\norm{\mathscr U(t)}_\infty
		+\norm{\mathscr V(t)}_\infty\right)^2.
	\end{aligned}
	\]
	
	Applying Lemma~\ref{lem:Dini} to $\mathscr U$ and $-\mathscr U$, respectively, gives
	\[
	\begin{aligned}
		D^+M_+(t)
		&\le\max_{r\in\operatorname{argmax}\mathscr U(t,\cdot)}
		\partial_t\mathscr U(t,r),\\
		D^+M_-(t)
		&\le\max_{r\in\operatorname{argmin}\mathscr U(t,\cdot)}
		\bigl(-\partial_t\mathscr U(t,r)\bigr).
	\end{aligned}
	\]
	The preceding two pointwise estimates, followed by $\norm{\mathscr U}_\infty=\max\{M_+,M_-\}$, therefore give
	\[
	D^+\norm{\mathscr U(t)}_\infty
	\le C_{\overline\rho}
	\left(1+\norm{\mathscr U(t)}_\infty
	+\norm{\mathscr V(t)}_\infty\right)^2.
	\]
	On the other hand, along each material trajectory, \eqref{eq:angular-V} gives
	\[
	\frac{\dd}{\dd t}\abs{\mathscr V}
	\le \abs{\mathscr U}\abs{\mathscr V}
	+F_1(\rho)\left(\abs{\mathscr V}+\abs{\mathscr U}\right).
	\]
	Taking the supremum over particle paths gives
	\[
	D^+\norm{\mathscr V(t)}_\infty
	\le C_{\overline\rho}
	\left(1+\norm{\mathscr U(t)}_\infty
	+\norm{\mathscr V(t)}_\infty\right)^2.
	\]
	Consequently, with
	\[
	X(t)=1+\norm{\mathscr U(t)}_\infty+\norm{\mathscr V(t)}_\infty,
	\]
	we obtain
	\begin{equation}
		\label{eq:Riccati}
		D^+X(t)\le C_{\overline\rho}X(t)^2.
	\end{equation}
	The comparison solution with initial value $X_{t_0}$ at time $t_0$ is
	\[
	Y(t)=\frac{X_{t_0}}
	{1-C_{\overline\rho}X_{t_0}(t-t_0)}.
	\]
	For $t_0\le t<t_0+\tau_{\rm ang}$ its denominator is larger than $1/2$, so
	\[
	X(t)\le Y(t)\le2X_{t_0}.
	\]
	This proves the asserted bound.
\end{proof}

\subsection{boundary scaling and flux equations}
Set
\begin{equation}
	\label{eq:beta-nu}
	\beta=\frac{\alpha-1}{\alpha},
	\qquad \nu=1+\frac1\alpha.
\end{equation}
For $k>0$, introduce the rescaled time $\theta$ and the rescaled inward mass coordinate $\zeta$ by
\[
t=k^\beta\theta, \qquad \delta=k\zeta.
\]
Define the pullback functions
\begin{equation}
	\label{eq:scaling}
	\begin{gathered}
		\tilde{r}(\theta,\zeta)
		:=r(k^\beta\theta,k\zeta), \qquad
		\tilde\rho(\theta,\zeta)
		:=\rho\bigl(k^\beta\theta,\tilde{r}(\theta,\zeta)\bigr),\\
		\tilde{u}(\theta,\zeta)
		:=u\bigl(k^\beta\theta,\tilde{r}(\theta,\zeta)\bigr), \qquad
		\tilde{v}(\theta,\zeta)
		:=v\bigl(k^\beta\theta,\tilde{r}(\theta,\zeta)\bigr).
	\end{gathered}
\end{equation}
Define
\begin{equation}
	\label{eq:H-def}
	\begin{aligned}
		W(\theta,\zeta)
		&:=k^{-1}\tilde\rho^\alpha(\theta,\zeta),\\
		F(\theta,\zeta)
		&:=\tilde{r}^2(\theta,\zeta)\tilde{u}(\theta,\zeta), \\
		H(\theta,\zeta)
		&:=\tilde{r}^2(\theta,\zeta)\tilde{v}(\theta,\zeta).
	\end{aligned}
\end{equation}
A direct calculation gives
\begin{equation}
	\label{eq:scaled-kinematic}
	\begin{aligned}
		\tilde{r}_\zeta&=-k^\beta W^{-1/\alpha}\tilde{r}^{-2},
		\qquad
		\tilde{r}_\theta=k^\beta\frac{F}{\tilde{r}^2}.
	\end{aligned}
\end{equation}
The effective flux satisfies
\begin{equation}
	\label{eq:H-evolution}
	H_\theta=k^\beta\left(\frac{2F H}{\tilde{r}^3}
	+F_1(\tilde{\rho})(F-H)\right).
\end{equation}
\begin{equation}
	\label{eq:scaled-flux}
	F_\theta-\alpha \tilde{r}^4(W^\nu F_\zeta)_\zeta=k^\beta\left(\frac{2F(2F-H)}{\tilde{r}^3}
	+F_1(\tilde{\rho})(F-H)\right):=\Scal.
\end{equation}
Since
\[
F(\theta,\zeta)=\tilde{r}^3(\theta,\zeta)\mathscr U(k^\beta\theta,\tilde{r}(\theta,\zeta)),
\qquad H=\tilde{r}^3(\theta,\zeta)\mathscr V(k^\beta\theta,\tilde{r}(\theta,\zeta)),
\]
the source has no center singularity.  The density upper bound and
Proposition~\ref{prop:normalized} give, on every bounded interval of rescaled time,
\begin{equation}
	\label{eq:source-small}
	\abs{\Scal}\le Ck^\beta.
\end{equation}
The no-slip boundary condition and center parity give
\begin{equation}
	\label{eq:Phi-boundary}
	F(\theta,0)=0,
	\qquad
	F\left(\theta,\frac Mk\right)=0.
\end{equation}

\begin{lemma}[Global range for the scaled flux]
	\label{lem:global-range}
	Fix an endpoint $\Theta>0$ in rescaled time.  Suppose that a classical
	solution is defined on $[0,T)$, that
	\[
		0<\rho\le\overline\rho
		\quad\hbox{on}\quad
		0\le t<\min\{T,k^\beta\Theta\},
		\qquad
		k^\beta\Theta\le\tau_{\rm ang},
	\]
	and that the initial transverse strain rate functions satisfy
	\[
		X_0:=1+\norm{\mathscr U(0)}_\infty
		+\norm{\mathscr V(0)}_\infty < \infty,
	\]
	where $\overline\rho$ and $X_0$ are independent of $k$.  Suppose also that,
	for a fixed $J<0$,
	\[
		J\le F(0,\zeta)\le0
		\quad\hbox{on}\quad 0\le\zeta\le M/k.
	\]
	Then, for
	$0\le\theta<\min\{\Theta,T/k^\beta\}$,
	\begin{align}
		J-C_\Theta k^\beta\theta
		&\le F(\theta,\zeta)
		\le C_\Theta k^\beta\theta,
	\label{eq:F-range}\\
		\abs{H(\theta,\zeta)-H_0(\zeta)}
		&\le C_\Theta k^\beta\theta,
	\label{eq:H-range}\\
		\frac{\tilde r(\theta,\zeta)}{\tilde r_0(\zeta)}
		&=\exp\left(k^\beta\int_0^\theta
		\mathscr U\bigl(k^\beta s,\tilde r(s,\zeta)\bigr)\dd s\right).
	\label{eq:r-ratio}
	\end{align}
	Here
	$H_0(\zeta)=H(0,\zeta)$,
	$\tilde r_0(\zeta)=\tilde r(0,\zeta)$, and $C_\Theta$ is independent of $k$.
\end{lemma}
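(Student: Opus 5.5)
The plan is to prove the three assertions separately, using Proposition~\ref{prop:normalized} as the single source of uniform control. Applying that proposition with $t_0=0$ and the given bound $0<\rho\le\overline\rho$ gives
\[
\norm{\mathscr U(t)}_\infty+\norm{\mathscr V(t)}_\infty\le2X_0
\]
for $0\le t<\min\{T,\tau_{\rm ang}\}$. Since $k^\beta\Theta\le\tau_{\rm ang}$, this covers all physical times $t=k^\beta\theta$ with $\theta<\min\{\Theta,T/k^\beta\}$. The constants $\overline\rho$ and $X_0$ do not depend on $k$, so every bound obtained below is uniform in $k$.

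\emph{The identity \eqref{eq:r-ratio}.} Along a fixed rescaled label $\zeta$, the kinematic relation \eqref{eq:scaled-kinematic} gives
\[
\partial_\theta\log\tilde r=k^\beta F/\tilde r^3=k^\beta\,\mathscr U\bigl(k^\beta\theta,\tilde r(\theta,\zeta)\bigr).
\]
Integrating in $\theta$ yields \eqref{eq:r-ratio}. At the center label $\zeta=M/k$ the ratio is read through the smooth limit $\tilde r/\tilde r_0\to\sigma/\sigma_0$, exactly as in \eqref{eq:integral-angular-u}. A consequence I will use next: the ratio lies between $e^{-2X_0\Theta}$ and $e^{2X_0\Theta}$, and $\tilde r\le R$, so $\tilde r$ stays bounded along every label.

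\emph{The effective flux \eqref{eq:H-range}.} Write $H=\tilde r^3\mathscr V$ and $F=\tilde r^3\mathscr U$. Then the right side of \eqref{eq:H-evolution} equals
\[
k^\beta\tilde r^3\bigl(2\mathscr U\mathscr V+F_1(\mathscr U-\mathscr V)\bigr).
\]
With $\tilde r\le R$, $F_1\le C_{\overline\rho}$ and $\abs{\mathscr U}+\abs{\mathscr V}\le2X_0$, this is bounded by $C_\Theta k^\beta$. Integrating along the fixed label gives \eqref{eq:H-range}.

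\emph{The range \eqref{eq:F-range}.} I apply a comparison principle for the scalar equation \eqref{eq:scaled-flux} on $[0,M/k]$. The source satisfies $\abs{\Scal}\le Ck^\beta$ by \eqref{eq:source-small}, with $C$ uniform on $[0,\Theta]$ by the same angular bound. The Dirichlet data vanish at both ends by \eqref{eq:Phi-boundary}. I take the barriers
\[
\overline F=Ck^\beta\theta,\qquad \underline F=J-Ck^\beta\theta .
\]
Since $\overline F$ is spatially constant, $\overline F-F$ satisfies a parabolic inequality with nonnegative right side. It is nonnegative initially, because $F(0,\cdot)\le0$, and nonnegative on the lateral boundary, because $\overline F\ge0=F$ there. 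Likewise $F-\underline F$ is nonnegative initially, because $F(0,\cdot)\ge J$, and on the boundary, because $\underline F\le J<0$.

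The argument is the maximum-point device already used in Proposition~\ref{prop:angular-bounds}. I consider $\max_\zeta(F-Ck^\beta\theta-\varepsilon\theta)$. At a positive interior maximum, $F_\zeta=0$ and $F_{\zeta\zeta}\le0$, so the diffusion term $\alpha\tilde r^4(W^\nu F_\zeta)_\zeta=\alpha\tilde r^4W^\nu F_{\zeta\zeta}$ is nonpositive. Hence the time derivative is at most $-\varepsilon$, and Lemma~\ref{lem:Dini} closes the argument. The lower barrier is treated symmetrically.

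The main obstacle is making this comparison rigorous at the degenerate end $\zeta=M/k$, the center. There $\tilde r^4$ vanishes and the label coordinate is singular, so the maximum argument cannot be run there directly. I expect this is harmless: $F=0$ at the center, so a strictly positive maximum of $F-Ck^\beta\theta-\varepsilon\theta$ cannot sit there. I also need the interior maximum argument to hold uniformly up to that endpoint. I would avoid the issue by working in the Eulerian variable $\mathscr U$ near the center, where $F=r^3\mathscr U$ is smooth. Otherwise it suffices to note that $W>0$ at all pre-terminal times, so the equation is classical away from the endpoints.
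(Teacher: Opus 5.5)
Your proposal is correct and follows essentially the same route as the paper: the uniform bounds on $\mathscr U,\mathscr V$ come from Proposition~\ref{prop:normalized}, \eqref{eq:H-range} and \eqref{eq:r-ratio} follow by direct integration along fixed labels, and \eqref{eq:F-range} follows from a weak maximum principle for \eqref{eq:scaled-flux} with barriers $Ck^\beta\theta$ and $J-Ck^\beta\theta$. The differences are cosmetic. The paper runs the comparison as a first-positive-maximum argument on the closed cylinder $[0,\bar\theta]\times[0,M/k]$ rather than through Lemma~\ref{lem:Dini}, and your center-endpoint concern is settled exactly as you first suggest: \eqref{eq:Phi-boundary} gives $F=0$ at both ends, so only interior maxima need to be examined, and there $W>0$ and $\tilde r>0$ make the equation classical.
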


\begin{proof}
	Put
	\[
		\Theta_*:=\min\{\Theta,T/k^\beta\}.
	\]
	Throughout the proof we abbreviate the pullbacks to scaled mass
	coordinates by
	\[
		\mathcal U(\theta,\zeta)
		:=\mathscr U\bigl(k^\beta\theta,\tilde r(\theta,\zeta)\bigr),
		\qquad
		\mathcal V(\theta,\zeta)
		:=\mathscr V\bigl(k^\beta\theta,\tilde r(\theta,\zeta)\bigr).
	\]
	We first obtain a bound that is uniform in $k$ and does not use a density
	lower bound.  If $0\le\theta<\Theta_*$, then
	$k^\beta\theta<\min\{T,k^\beta\Theta\}\le\tau_{\rm ang}$.
	Proposition~\ref{prop:normalized} therefore gives
	\begin{equation}
		\label{eq:global-normalized-bound}
		\norm{\mathcal U(\theta)}_\infty
		+\norm{\mathcal V(\theta)}_\infty
		\le2X_0.
	\end{equation}
	Since $\gamma>\alpha$ and $0<\rho\le\overline\rho$,
	\begin{equation}
		\label{eq:global-F1-bound}
		0\le F_1(\rho)
		=\frac\gamma\alpha\rho^{\gamma-\alpha}
		\le\frac\gamma\alpha\overline\rho^{\gamma-\alpha}
		=:C_{\overline\rho}.
	\end{equation}

	For $0\le\zeta<M/k$, the identities	$F=\tilde r^3\mathcal U$ and $H=\tilde r^3\mathcal V$ give
	\begin{equation}
		\label{eq:global-source-rewrite}
		\Scal
		=k^\beta\tilde r^3\left[
		2\mathcal U(2\mathcal U-\mathcal V)
		+F_1(\tilde{\rho})(\mathcal U-\mathcal V)
		\right].
	\end{equation}
	Because $0\le\tilde r\le R$, \eqref{eq:global-normalized-bound} and
	\eqref{eq:global-F1-bound} yield
	\[
		\begin{aligned}
		\abs{2\mathcal U(2\mathcal U-\mathcal V)}
		&\le 4X_0(2X_0+2X_0)=16X_0^2,\\
		\abs{F_1(\tilde{\rho})(\mathcal U-\mathcal V)}
		&\le 2C_{\overline{\rho}}X_0.
		\end{aligned}
	\]
	Consequently,
	\begin{equation}
		\label{eq:global-source-bound}
		\abs{\Scal}\le C_Sk^\beta,
		\qquad
		C_S:=R^3\left(16X_0^2+2C_{\overline{\rho}}X_0\right).
	\end{equation}

	We next prove the upper bound for $F$.  Fix any
	$0<\bar{\theta}<\Theta_*$ and any $\varepsilon>0$, and set
	\[
		G_\varepsilon(\theta,\zeta)
		:=F(\theta,\zeta)-(C_S+\varepsilon)k^\beta\theta
	\]
	on the closed cylinder
	$[0,\bar{\theta}]\times[0,M/k]$.  By \eqref{eq:scaled-flux} and
	\eqref{eq:global-source-bound},
	\begin{equation}
		\label{eq:upper-barrier-equation}
		(G_\varepsilon)_\theta
		-\alpha\tilde r^4
		(W^\nu(G_\varepsilon)_\zeta)_\zeta
		=\Scal-(C_S+\varepsilon)k^\beta
		\le-\varepsilon k^\beta<0.
	\end{equation}
	The initial range gives $G_\varepsilon(0,\zeta)\le0$, while
	\eqref{eq:Phi-boundary} gives
	\[
		G_\varepsilon(\theta,0)
		=G_\varepsilon(\theta,M/k)
		=-(C_S+\varepsilon)k^\beta\theta\le0.
	\]
	Suppose, for contradiction, that $G_\varepsilon$ has a positive maximum
	on this cylinder.  It cannot occur at the initial time or at either spatial
	endpoint, so it occurs at some $(\theta_0,\zeta_0)$ with
	$0<\theta_0\le\bar{\theta}$ and $0<\zeta_0<M/k$.  At this point,
	\[
		(G_\varepsilon)_\theta\ge0,
		\qquad
		(G_\varepsilon)_\zeta=0,
		\qquad
		(G_\varepsilon)_{\zeta\zeta}\le0.
	\]
	Here the first inequality is the one-sided time derivative if
	$\theta_0=\bar{\theta}$.  Since $W>0$ on the positive classical branch,
	\[
		\begin{aligned}
		(W^\nu(G_\varepsilon)_\zeta)_\zeta
		&=W^\nu(G_\varepsilon)_{\zeta\zeta}
		+\nu W^{\nu-1}W_\zeta(G_\varepsilon)_\zeta\\
		&=W^\nu(G_\varepsilon)_{\zeta\zeta}\le0.
		\end{aligned}
	\]
	The left-hand side of \eqref{eq:upper-barrier-equation} is therefore
	nonnegative, contradicting its strictly negative right-hand side.  Hence
	$G_\varepsilon\le0$.  Letting $\varepsilon\downarrow0$ and then taking
	$\bar{\theta}\uparrow\Theta_*$ gives
	\begin{equation}
		\label{eq:global-F-upper-proof}
		F(\theta,\zeta)\le C_Sk^\beta\theta,
		\qquad 0\le\theta<\Theta_*.
	\end{equation}

	For the lower bound define, on the same cylinder,
	\[
		L_\varepsilon(\theta,\zeta)
		:=J-F(\theta,\zeta)-(C_S+\varepsilon)k^\beta\theta.
	\]
	Then
	\begin{equation}
		\label{eq:lower-barrier-equation}
		(L_\varepsilon)_\theta
		-\alpha\tilde r^4
		(W^\nu(L_\varepsilon)_\zeta)_\zeta
		=-\Scal-(C_S+\varepsilon)k^\beta
		\le-\varepsilon k^\beta<0.
	\end{equation}
	The initial range gives
	$L_\varepsilon(0,\zeta)=J-F(0,\zeta)\le0$.  At both spatial endpoints,
	\[
		L_\varepsilon
		=J-(C_S+\varepsilon)k^\beta\theta<0
	\]
	because $J<0$.  The same argument at an interior maximum, applied to
	\eqref{eq:lower-barrier-equation} gives $L_\varepsilon\le0$.  Letting
	$\varepsilon\downarrow0$ and $\bar{\theta}\uparrow\Theta_*$ yields
	\begin{equation}
		\label{eq:global-F-lower-proof}
		F(\theta,\zeta)\ge J-C_Sk^\beta\theta,
		\qquad 0\le\theta<\Theta_*.
	\end{equation}
	Together, \eqref{eq:global-F-upper-proof} and
	\eqref{eq:global-F-lower-proof} prove \eqref{eq:F-range} with any
	$C_\Theta\ge C_S$.

	It remains to estimate $H$ and integrate the kinematic equation.  From
	\eqref{eq:H-evolution},
	\begin{equation}
		\label{eq:global-H-rewrite}
		H_\theta
		=k^\beta\tilde r^3\left[
		2\mathcal U\mathcal V
		+F_1(\tilde{\rho})(\mathcal U-\mathcal V)
		\right].
	\end{equation}
	The same bounds give
	\begin{equation}
		\label{eq:global-H-derivative-bound}
		\abs{H_\theta}\le C_Hk^\beta,
		\qquad
		C_H:=R^3\left(2X_0^2+2C_{\overline{\rho}}X_0\right).
	\end{equation}
	At fixed material label $\zeta$, integration from $0$ to $\theta$ gives
	\[
		\abs{H(\theta,\zeta)-H_0(\zeta)}
		\le\int_0^\theta\abs{H_s(s,\zeta)}\dd s
		\le C_Hk^\beta\theta.
	\]
	This is \eqref{eq:H-range} after taking
	$C_\Theta:=\max\{C_S,C_H\}$.

	Finally, if $0\le\zeta<M/k$, then $\tilde r(\theta,\zeta)>0$ and the
	kinematic identity \eqref{eq:scaled-kinematic} gives
	\[
		\tilde r_\theta
		=k^\beta\frac{F}{\tilde r^2}
		=k^\beta\tilde r\,\mathcal U.
	\]
	Dividing by $\tilde r$ and integrating at fixed $\zeta$ produces
	\[
		\log\frac{\tilde r(\theta,\zeta)}{\tilde r_0(\zeta)}
		=k^\beta\int_0^\theta\mathcal U(s,\zeta)\dd s.
	\]
	Exponentiation proves \eqref{eq:r-ratio} away from the center.  The bound
	\eqref{eq:global-normalized-bound} and radial smoothness allow
	$\zeta\uparrow M/k$ in the right-hand side, which gives the continuous radial interpretation of the quotient at the center.
\end{proof}

\subsection{Compatible initial data in the boundary layer}
Fix
\begin{equation}
	\label{eq:data-constants}
	A_*>0,
	\qquad -R^4A_*<J<0,
	\qquad a_0=A_*+\frac{J}{R^4}>0,
	\qquad \rho_{\rm c}>0.
\end{equation}
The initial density and velocity constructed in this section are
\begin{equation}
	\label{eq:rho0-piecewise}
	\rho_{0,k}(r):=
	\left\{
	\begin{aligned}
		&(1-\eta(r))\rho_{\rm c}
		+\eta(r)\rho_{{\rm aff},k}(r),
		&&0\le r\le r_{2,k},\\
		&\varrho_{0,k}\bigl(\delta_{0,k}(r)\bigr),
		&&r_{2,k}\le r\le R,
	\end{aligned}
	\right.
\end{equation}
and
\begin{equation}
	\label{eq:u0-piecewise}
	u_{0,k}(r):=
	\left\{
	\begin{aligned}
		&0,
		&&r=0,\\
		&\displaystyle\frac{J}{r^2}Q_{\rm c}\left(\frac r{r_{2,k}}\right),
		&&0<r\le r_{2,k},\\
		&\displaystyle\frac{1}{r^2}
		F_0\left(\frac{\delta_{0,k}(r)}k\right),
		&&r_{2,k}\le r\le R.
	\end{aligned}
	\right.
\end{equation}
All auxiliary profiles and interface quantities in \eqref{eq:rho0-piecewise}--\eqref{eq:u0-piecewise} are defined as below.

Choose a smooth nondecreasing step $S$ on $[0,\infty)$, flat at zero, such
that
\[
S(0)=0,
\qquad S(\zeta)=1\quad(\zeta\ge L_0).
\]
Fix $\Lambda_0>0$ and choose $\chi_0\in C_c^\infty([0,L_0))$ with
$\chi_0=1$ near zero and support contained in $\{S\le1/2\}$.  Set
\begin{align}
	E_k(\zeta)
	&=-\frac{\gamma A_*}{2\alpha^2}
	\Lambda_0^{(\gamma-2\alpha-1)/\alpha}
	k^{(\gamma-1)/\alpha}\zeta^2\chi_0(\zeta),
	\label{eq:Ek}\\
	F_0(\zeta)&=JS(\zeta)+E_k(\zeta),
	\label{eq:F0}\\
	(W_0)_\zeta&=A_*+\frac{F_0}{R^4},
	\qquad W_0(0)=\Lambda_0.
	\label{eq:W0}
\end{align}
Because $(\gamma-1)/\alpha>0$, $\|E_k\|_{L^\infty}
\le Ck^{(\gamma-1)/\alpha}$.  For sufficiently small $k$,
$\|E_k\|_\infty\le(-J)/2$, and hence
\begin{equation}
	\label{eq:data-ranges}
	J\le F_0\le0,
	\qquad
	a_0\le(W_0)_\zeta\le A_*.
\end{equation}
After the fixed transition at $\zeta=L_0$, the functions $F_0$ and
$(W_0)_\zeta$ reach the constant values
\begin{equation}
	\label{eq:plateau}
	F_0=J,
	\qquad (W_0)_\zeta=a_0.
\end{equation}
We retain this plateau through a fixed interval in the physical inward mass
coordinate, $kL_0\le\delta\le\delta_2$, with $\delta_2$ independent of $k$.

Set
\begin{equation}
	\label{eq:collar-density-radius}
	\varrho_{0,k}(\delta)
	:=\left[kW_0\left(\frac\delta k\right)\right]^{1/\alpha},
\end{equation}
and recall
\[
r_{0,k}(\delta)^3
:=R^3-3\int_0^\delta\varrho_{0,k}(s)^{-1}\dd s.
\]
	Since $a_0\le(W_0)_\zeta\le A_*$ and $\alpha>1$, we have
	\begin{equation}
	\label{eq:data-collar-bounds}
	(k\Lambda_0+A_*\delta)^{-1/\alpha}
	\le \varrho_{0,k}(\delta)^{-1}
	\le(k\Lambda_0+a_0\delta)^{-1/\alpha}
	\le(a_0\delta)^{-1/\alpha}.
	\end{equation}
Thus
\[
\frac{A_*^{-1/\alpha}}{2\beta}\delta_2^\beta
\le \int_0^{\delta_2}\varrho_{0,k}(s)^{-1}\dd s
\le \frac{a_0^{-1/\alpha}}{\beta}\delta_2^\beta
\]
for all sufficiently small $k$.  We may therefore choose
$\delta_2, c_2>0$, independently of $k$, so that
\begin{equation}
	\label{eq:r2-bounds}
	r_{2,k}:=r_{0,k}(\delta_2),
	\qquad
	\frac{3R}{4}\le r_{2,k}\le R-c_2.
\end{equation}

Let $\delta_{0,k}(r)$ denote the inverse of $r_{0,k}(\delta)$ from $[0,\delta_2]$ onto $[r_{2,k},R]$, and put
\[
\rho_{2,k}:=\varrho_{0,k}(\delta_2).
\]
The plateau identity $(W_0)_\zeta=a_0$ gives
$\partial_r(\rho_0^{\alpha-1})
=-\frac{a_0(\alpha-1)}{\alpha}r^2$.  Since
$\rho_0(r_{2,k})=\rho_{2,k}$ at the interface,
\[
\rho_0(r)^{\alpha-1}-\rho_{2,k}^{\alpha-1}
=-\frac{a_0(\alpha-1)}{\alpha}
\int_{r_{2,k}}^r s^2\,ds
\]
we have the inward continuation
\begin{equation}
	\label{eq:rho-affine-r}
	\rho_{{\rm aff},k}(r)
	:=\left[
	\rho_{2,k}^{\alpha-1}
	+\frac{a_0(\alpha-1)}{3\alpha}(r_{2,k}^3-r^3)
	\right]^{1/(\alpha-1)}.
\end{equation}
Finally, choose a fixed smooth cutoff $\eta:[0,R]\to[0,1]$ and a fixed
smooth nondecreasing function $Q_{\rm c}:[0,1]\to[0,1]$ such that
\begin{equation}
	\label{eq:center-cutoffs}
	\begin{gathered}
		\eta=0\quad(0\le r\le R/3),
		\qquad
		\eta=1\quad(2R/3\le r\le R),\\
		Q_{\rm c}(s)=s^3\quad\hbox{near }s=0,
		\qquad
		Q_{\rm c}(s)=1\quad\hbox{near }s=1.
	\end{gathered}
\end{equation}

Let
$\mathcal L_{0,k}:=\mathcal L_0(\rho_{0,k},\boldsymbol u_{0,k})$, with
$\mathcal L_0$ defined in \eqref{eq:L0}.

\begin{lemma}[Compatibility of initial data]
	\label{lem:data}
	For every fixed sufficiently small $k>0$, the pair
	$(\rho_{0,k},\boldsymbol u_{0,k})$ constructed above is smooth, radial, and
	has strictly positive density.  It satisfies the boundary condition and the
	center parity conditions.  Furthermore,
	\begin{equation}
		\label{eq:data-compatibility}
		\mathcal G_{\alpha,\gamma}
		(\rho_{0,k},\boldsymbol u_{0,k})\in H_0^1(B_R).
	\end{equation}
	Moreover, there are constants independent of small $k$ such that
	\begin{equation}
		\label{eq:uniform-data}
		\sup_{0<k<k_0}\left(
		\mathcal L_{0,k}
		+\norm{u_{0,k}/r}_{L^\infty}
		+\norm{v_{0,k}/r}_{L^\infty}
		\right)<\infty,
		\qquad
		0<M_-\le M_k\le M_+<\infty,
	\end{equation}
	Finally,
	\begin{equation}
		\label{eq:wall-values}
		\rho_{0,k}(R)=(k\Lambda_0)^{1/\alpha}>0,
		\qquad
		v_{0,k}(R)=-R^2A_*<0.
	\end{equation}
\end{lemma}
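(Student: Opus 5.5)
The plan is to verify the assertions one region at a time: the center region $0\le r\le R/3$, the transition region up to $r_{2,k}$, and the boundary collar $r_{2,k}\le r\le R$ parametrized by $\delta$. The collar is best handled in inward mass coordinates, using the identities $\partial_r=-\rho r^2\partial_\delta$ and $v-u=(\rho^\alpha)_r/\rho=-r^2(\rho^\alpha)_\delta$. With $\rho^\alpha=kW_0(\delta/k)$ these give
\[
v_{0,k}-u_{0,k}=-r^2(W_0)_\zeta\!\left(\tfrac{\delta}{k}\right).
\]

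\emph{Wall values.} At $\delta=0$ we have $u_{0,k}(R)=R^{-2}F_0(0)=0$, since $S(0)=0$ and $E_k(0)=0$. We also have $(W_0)_\zeta(0)=A_*+F_0(0)/R^4=A_*$. Hence $v_{0,k}(R)=-R^2A_*<0$ and $\rho_{0,k}(R)=(kW_0(0))^{1/\alpha}=(k\Lambda_0)^{1/\alpha}$, which is \eqref{eq:wall-values}.

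\emph{Positivity.} By \eqref{eq:data-ranges}, $W_0\ge\Lambda_0>0$ on the collar, so the density is positive there. The affine continuation \eqref{eq:rho-affine-r} is positive because its bracket only increases as $r$ decreases. The remaining region is a convex combination with $\rho_c$.

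\emph{Smoothness across the interface and at the center.} On the plateau $kL_0\le\delta\le\delta_2$ the collar density satisfies $\partial_r\rho^{\alpha-1}=-\tfrac{a_0(\alpha-1)}{\alpha}r^2$. Therefore it coincides with $\rho_{{\rm aff},k}$ on a neighbourhood of $r_{2,k}$, and $\eta\equiv1$ there because $r_{2,k}\ge 3R/4$. Likewise $F_0=J$ on the plateau and $Q_{\rm c}=1$ near $s=1$, so both velocity formulas equal $J/r^2$ near $r_{2,k}$. Near the center $\rho_{0,k}\equiv\rho_c$ and $u_{0,k}=Jr/r_{2,k}^3$, which is linear and odd. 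This gives smooth radial parity, hence $\boldsymbol u_{0,k}\in H^3\cap H^1_0$ and $\rho_{0,k}\in H^3$.

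\emph{Compatibility \eqref{eq:data-compatibility}: the main obstacle.} Everything is smooth, so it suffices to show that the radial acceleration vanishes at $r=R$. Using $u(R)=0$, this reduces to
\[
\bigl(\alpha\rho^\alpha d\bigr)_r(R)=(\rho^\gamma)_r(R),\qquad d=\frac{(r^2u)_r}{r^2}=-\rho\,F_\delta .
\]
Since $S$ is flat at zero, near $\delta=0$ we have $F=E_k(\delta/k)=-c\,\delta^2/k^2$, where $c$ is the coefficient in \eqref{eq:Ek}. It follows that $F_\delta(0)=0$, so $d(R)=0$ and
\[
d_r(R)=\rho^2R^2F_{\delta\delta}=-\frac{2c\rho^2R^2}{k^2}.
\]
On the other side, $(\rho^\gamma)_r=-\rho R^2\tfrac{\gamma}{\alpha}\rho^{\gamma-\alpha}(\rho^\alpha)_\delta$ with $(\rho^\alpha)_\delta(R)=A_*$. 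Equating the two sides and substituting $\rho^\alpha=k\Lambda_0$ gives
\[
c=\frac{\gamma A_*}{2\alpha^2}\Lambda_0^{(\gamma-2\alpha-1)/\alpha}k^{(\gamma-1)/\alpha},
\]
which is exactly the choice in \eqref{eq:Ek}. The delicate point is to keep the bookkeeping between $\delta$, $\zeta$ and $r$ derivatives exact. In particular one must check that $E_k$ also enters $(W_0)_\zeta$ only at order $\zeta^2$, so that $(\rho^\alpha)_\delta(R)=A_*$ is undisturbed; I will verify this carefully.

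\emph{Uniform bounds \eqref{eq:uniform-data}.} Bound each term of $\mathcal L_0$ directly.
\begin{itemize}[leftmargin=2em]
\item The mass and pressure terms are bounded since $\rho_{0,k}\le C$ uniformly.
\item In the collar, $|\partial_r\rho^{\alpha-1/2}|^2=c\,\rho|v-u|^2\le C r^4A_*^2\rho$, and $|\partial_r\rho^s|^{2q}=C\rho|v-u|^{2q}$ is bounded in the same way.
\item The velocity satisfies $|u_{0,k}|\le |J|/r^2$ with $r\ge 3R/4$ in the collar, so $u/r$ and $v/r=u/r-r(W_0)_\zeta$ are bounded there.
\item Inside $r_{2,k}$, all profiles depend on $k$ only through $r_{2,k}\in[3R/4,R-c_2]$ and $\rho_{2,k}$. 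Here $\rho_{2,k}$ lies between $(a_0\delta_2)^{1/\alpha}$ and $(k\Lambda_0+A_*\delta_2)^{1/\alpha}$ by \eqref{eq:data-collar-bounds}, so all derivatives are bounded uniformly for $0<k<k_0$.
\end{itemize}
The mass bounds follow from the two-sided density bounds on $r\le R-c_2$ together with the fixed collar bounds, which proves \eqref{eq:uniform-data}.
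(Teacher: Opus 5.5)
Your proposal is correct and follows essentially the same route as the paper. You match the two formulas at the interface through the plateau $F_0=J$, $(W_0)_\zeta=a_0$, and you reduce \eqref{eq:data-compatibility} to the vanishing of the radial acceleration at $r=R$, where matching $(\alpha\rho^\alpha d)_r(R)$ against $(\rho^\gamma)_r(R)$ fixes the coefficient in $E_k$. You then use the collar identity $v_{0,k}-u_{0,k}=-r^2(W_0)_\zeta$ for the uniform bounds, just as the paper does.

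The only cosmetic difference is that you compute $d=-\rho F_\delta$ directly in the inward mass variable, whereas the paper passes through $u_r(R)=0$ and $u_{rr}(R)$. Note also that ``flat at zero'' means $F$ agrees with $E_k(\delta/k)$ to infinite order at $\delta=0$, not necessarily on a neighbourhood; that is all your computation uses.
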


\begin{proof}
	We divide the proof into four steps.

	\smallskip
	\noindent
	\emph{Step 1: matching, smoothness, positivity, and center regularity.}
	On the collar $0\le\delta\le\delta_2$, the definition of $\varrho_{0,k}$ gives
	\begin{equation}
		\label{eq:data-collar-power}
		\rho_{0,k}(r)^\alpha
		=kW_0\left(\frac{\delta_{0,k}(r)}k\right),
		\qquad
		(\delta_{0,k})_r=-\rho_{0,k}r^2.
	\end{equation}
	On the plateau $\delta/k\ge L_0$, differentiating
	\eqref{eq:data-collar-power} and using
	$(W_0)_\zeta=a_0$ gives
	\[
	\partial_r\bigl(\rho_{0,k}^{\alpha-1}\bigr)
	=-\frac{a_0(\alpha-1)}{\alpha}r^2.
	\]
	Thus the collar density agrees, on a full neighborhood of $r=r_{2,k}$,
	with $\rho_{{\rm aff},k}$ from \eqref{eq:rho-affine-r}.  Because
	$r_{2,k}\ge3R/4>2R/3$, one also has $\eta=1$ on that neighborhood.  Hence
	the two formulas in \eqref{eq:rho0-piecewise} match to every order at
	$r=r_{2,k}$.

	Likewise, $F_0=J$ on a neighborhood of $\delta=\delta_2$, whereas
	$Q_{\rm c}=1$ on a neighborhood of $r/r_{2,k}=1$.  The two velocity
	formulas in \eqref{eq:u0-piecewise} therefore also match to every order.
	Near the center, the cutoff properties give
	\[
		\rho_{0,k}(r)=\rho_{\rm c},
		\qquad
		u_{0,k}(r)=\frac{J}{r^2}
		\left(\frac r{r_{2,k}}\right)^3
		=\frac{J}{r_{2,k}^3}r.
	\]
	Consequently, $\rho_{0,k}(|x|)$ is constant near $x=0$, and
	\[
		\boldsymbol u_{0,k}(x)
		=u_{0,k}(|x|)\frac{x}{|x|}
		=\frac{J}{r_{2,k}^3}x
	\]
	there.  This verifies the required even scalar parity and odd radial vector
	parity at the center.  It follows that both fields are smooth on the closed
	ball for every fixed $k>0$.

	Finally, $S$ is flat at zero and $E_k(0)=0$, so $F_0(0)=0$.  Therefore
	\[
		u_{0,k}(R)=\frac{F_0(0)}{R^2}=0,
	\]
	which is the boundary condition.

	\smallskip
	\noindent
	\emph{Step 2: the boundary compatibility condition.}
	Use the equation acceleration defined in
	\eqref{eq:equation-acceleration}.
	If
	\[
		d=u_r+\frac{2u}{r}=\frac{(r^2u)_r}{r^2},
	\]
	then the radial coefficient of $\mathcal G_{\alpha,\gamma}$ is
	\begin{equation}
		\label{eq:data-radial-acceleration}
		G_{\alpha,\gamma}^{\rm rad}
		=\rho^{-1}\left[
		(\alpha\rho^\alpha d)_r
		-\frac2r(\rho^\alpha)_ru
		-(\rho^\gamma)_r
		\right]-uu_r.
	\end{equation}
	At a boundary at which $u(R)=u_r(R)=0$, one has
	$d(R)=0$ and
	\[
		d_r(R)
		=u_{rr}(R)+\frac2R u_r(R)-\frac2{R^2}u(R)
		=u_{rr}(R).
	\]
	Evaluation of \eqref{eq:data-radial-acceleration} at $r=R$ therefore gives
	\[
		G_{\alpha,\gamma}^{\rm rad}(R)
		=\alpha\rho^{\alpha-1}(R)u_{rr}(R)
		-\gamma\rho^{\gamma-2}(R)\rho_r(R).
	\]
	Hence the zero trace of the initial acceleration is equivalent to the scalar
	identity
	\begin{equation}
		\label{eq:data-scalar-compatibility}
		u_{rr}(R)
		=\frac\gamma\alpha
		\rho^{\gamma-\alpha-1}(R)\rho_r(R).
	\end{equation}

	We now compute both sides of
	\eqref{eq:data-scalar-compatibility} for the constructed data.  Since
	\[
		\rho_{0,k}(R)
		=(k\Lambda_0)^{1/\alpha}.
	\]
	When $r_{2,k}\le r\le R$, with $\zeta=\delta_{0,k}/k$, one has
	\[
		\zeta_r=-\frac{\rho_{0,k}r^2}{k},
		\qquad
		u_{0,k}=\frac{F_0(\zeta)}{r^2}.
	\]
	Consequently,
	\begin{equation}
		\label{eq:data-ur}
		(u_{0,k})_r
		=\frac{(F_0)_\zeta\zeta_r}{r^2}-\frac{2F_0}{r^3}
		=-\frac{\rho_{0,k}}k(F_0)_\zeta-\frac{2F_0}{r^3}.
	\end{equation}
	The flatness of $S$ at zero and the factor $\zeta^2$ in $E_k$ imply
	\[
		F_0(0)=(F_0)_\zeta(0)=0.
	\]
	Thus
	$u_{0,k}(R)=(u_{0,k})_r(R)=0$, as required in the reduction above.
	Differentiating \eqref{eq:data-ur} once more gives
	\[
	\begin{aligned}
		(u_{0,k})_{rr}
		={}&-\frac{(\rho_{0,k})_r}{k}(F_0)_\zeta
		-\frac{\rho_{0,k}}k(F_0)_{\zeta\zeta}\zeta_r\\
		&-\frac{2(F_0)_\zeta\zeta_r}{r^3}
		+\frac{6F_0}{r^4}.
	\end{aligned}
	\]
	At $r=R$, all terms except the second one vanish, and hence
	\begin{equation}
		\label{eq:data-urr-wall}
		(u_{0,k})_{rr}(R)
		=\frac{\rho_{0,k}^2(R)R^2}{k^2}
		(F_0)_{\zeta\zeta}(0).
	\end{equation}
	Since $S$ is flat at zero and $\chi_0=1$ near zero,
	\eqref{eq:Ek} gives
	\[
		(F_0)_{\zeta\zeta}(0)
		=-\frac{\gamma A_*}{\alpha^2}
		\Lambda_0^{(\gamma-2\alpha-1)/\alpha}
		k^{(\gamma-1)/\alpha}.
	\]
	Substituting this into \eqref{eq:data-urr-wall} and using
	$\rho_{0,k}(R)=(k\Lambda_0)^{1/\alpha}$, we find
	\begin{equation}
		\label{eq:data-urr-value}
	\begin{aligned}
		(u_{0,k})_{rr}(R)
		&=-\frac{\gamma A_*R^2}{\alpha^2}
		\Lambda_0^{(\gamma-2\alpha+1)/\alpha}
		k^{(\gamma-2\alpha+1)/\alpha}\\
		&=-\frac{\gamma A_*R^2}{\alpha^2}
		\rho_{0,k}^{\gamma-2\alpha+1}(R).
	\end{aligned}
	\end{equation}

	On the other hand, differentiating
	\eqref{eq:data-collar-power} in $r$ gives the identity
	\begin{equation}
		\label{eq:data-rhor-identity}
		(\rho_{0,k})_r
		=-\frac{r^2}{\alpha}(W_0)_\zeta
		\rho_{0,k}^{2-\alpha}.
	\end{equation}
	Because $F_0(0)=0$, equation \eqref{eq:W0} yields
	$(W_0)_\zeta(0)=A_*$.  Therefore
	\begin{equation}
		\label{eq:data-rhor-wall}
		(\rho_{0,k})_r(R)
		=-\frac{A_*R^2}{\alpha}
		\rho_{0,k}^{2-\alpha}(R).
	\end{equation}
	It follows term by term that
	\[
	\begin{aligned}
		\frac\gamma\alpha
		\rho_{0,k}^{\gamma-\alpha-1}(R)
		(\rho_{0,k})_r(R)
		&=-\frac{\gamma A_*R^2}{\alpha^2}
		\rho_{0,k}^{\gamma-2\alpha+1}(R)\\
		&=(u_{0,k})_{rr}(R),
	\end{aligned}
	\]
	which proves \eqref{eq:data-scalar-compatibility}.

	For fixed $k$, the data are smooth and have a positive minimum on the closed
	ball.  Hence
	$\mathcal G_{\alpha,\gamma}(\rho_{0,k},\boldsymbol u_{0,k})$
	is a smooth radial vector field and in particular belongs to $H^1(B_R)$.
	The preceding calculation says exactly that its trace on $\partial B_R$
	vanishes.  By the trace characterization of $H_0^1(B_R)$, this proves
	\eqref{eq:data-compatibility}.

	\smallskip
	\noindent
	\emph{Step 3: uniform pointwise and mass bounds.}
	The collar estimate \eqref{eq:data-collar-bounds}, the fixed length
	$\delta_2$, and the bounds $r_{2,k}\in[3R/4,R-c_2]$ imply
	\[
		\sup_{0<k<k_0}
		\norm{\rho_{0,k}}_{L^\infty(r_{2,k},R)}<\infty.
	\]
	Moreover,
	\[
		(a_0\delta_2)^{1/\alpha}
		\le\rho_{2,k}
		\le(k_0\Lambda_0+A_*\delta_2)^{1/\alpha}.
	\]
	Thus the quantity inside the brackets in
	\eqref{eq:rho-affine-r} stays in a fixed compact subinterval of
	$(0,\infty)$ for $0\le r\le r_{2,k}$.  Since $\eta$ is fixed, the center
	extension and its first derivative are consequently bounded
	uniformly in $k$.  In particular,
	\begin{equation}
		\label{eq:data-rho-r-uniform}
		\sup_{0<k<k_0}
		\left(
		\norm{\rho_{0,k}}_{L^\infty(B_R)}
		+\norm{(\rho_{0,k})_r}_{L^\infty(0,r_{2,k})}
		\right)<\infty.
	\end{equation}

	When $r_{2,k}\le r \le R$, \eqref{eq:data-rhor-identity} gives
	\[
		v_{0,k}
		=u_{0,k}+\alpha\rho_{0,k}^{\alpha-2}(\rho_{0,k})_r
		=\frac{F_0}{r^2}-r^2(W_0)_\zeta.
	\]
	Therefore, using $r\ge3R/4$, $\abs{F_0}\le\abs J$, and
	$0<(W_0)_\zeta\le A_*$,
	\begin{equation}
		\label{eq:data-normalized-collar}
		\left|\frac{u_{0,k}}r\right|
		\le\abs J\left(\frac4{3R}\right)^3,
		\qquad
		\left|\frac{v_{0,k}}r\right|
		\le\abs J\left(\frac4{3R}\right)^3+RA_*.
	\end{equation}
	On the center extension, put $s_r=r/r_{2,k}$.  Then
	\[
		\frac{u_{0,k}(r)}r
		=\frac{J}{r_{2,k}^3}\frac{Q_{\rm c}(s_r)}{s_r^3}.
	\]
	The quotient $Q_{\rm c}(s)/s^3$ is smooth and bounded on $[0,1]$, with its value
	understood as $1$ near zero, and $r_{2,k}\ge3R/4$.  Hence $u_{0,k}/r$
	is uniformly bounded there.  The density is constant on $0\le r\le R/3$;
	on the remaining center annulus, $r$ is bounded away from zero and the
	density and its first derivative are uniformly bounded, with the density
	also bounded away from zero.  It follows that
	\[
		\frac{\alpha\rho_{0,k}^{\alpha-2}(\rho_{0,k})_r}{r}
	\]
	is uniformly bounded on the center extension.  Since
	$v_{0,k}/r=u_{0,k}/r+\alpha\rho_{0,k}^{\alpha-2}
	(\rho_{0,k})_r/r$, this fact and
	\eqref{eq:data-normalized-collar} prove the two bounds for the transverse strain rate functions in \eqref{eq:uniform-data}.

	Finally, $\rho_{0,k}=\rho_{\rm c}$ on $0\le r\le R/3$.  Recalling that
	$M_k=\int_0^R\rho_{0,k}(r)r^2\dd r$, we obtain
	\[
		M_k\ge\rho_{\rm c}\int_0^{R/3}r^2\dd r
		=\frac{\rho_{\rm c}R^3}{81}=:M_->0.
	\]
	The uniform upper density bound gives
	\[
		M_k\le
		\frac{R^3}{3}
		\sup_{0<k<k_0}\norm{\rho_{0,k}}_{L^\infty(B_R)}
		=:M_+<\infty.
	\]

	\smallskip
	\noindent
	\emph{Step 4: the uniform functional involving only low order quantities.}
	We spell this out because the small boundary density must not be replaced by a uniform positive lower bound.  For any fixed finite exponents $p,q\ge1$, put
	\[
		s=\alpha-1+\frac1{2q}.
	\]
	The corresponding quantity in the statement is
	\[
	\begin{aligned}
		\mathcal L_{0,k}:={}&
		\int_{B_R}\left(
		\rho_{0,k}+\rho_{0,k}^\gamma
		+\rho_{0,k}\abs{\boldsymbol u_{0,k}}^2
		+\rho_{0,k}\abs{\boldsymbol u_{0,k}}^{2p}
		+\abs{\nabla\rho_{0,k}^{\alpha-1/2}}^2
		\right)\dd x\\
		&+\int_{B_R}\left(
		\rho_{0,k}\abs{v_{0,k}}^2
		+\rho_{0,k}\abs{v_{0,k}}^{2q}
		+\abs{\nabla\rho_{0,k}^s}^{2q}
		\right)\dd x.
	\end{aligned}
	\]
	For every exponent $\ell>0$, on $r_{2,k}\le r \le R$ the collar identity \eqref{eq:data-rhor-identity} gives
	\begin{equation}
		\label{eq:data-power-gradient}
		\abs{\partial_r\rho_{0,k}^\ell}
		=\frac\ell\alpha r^2\abs{(W_0)_\zeta}
		\rho_{0,k}^{\ell+1-\alpha}.
	\end{equation}
	Taking first $\ell=\alpha-1/2$ and then
	$\ell=s=\alpha-1+1/(2q)$ gives
	\[
		\abs{\partial_r\rho_{0,k}^{\alpha-1/2}}^2
		\le C\rho_{0,k},
		\qquad
		\abs{\partial_r\rho_{0,k}^{s}}^{2q}
		\le C\rho_{0,k}
	\]
	with $C$ independent of $k$.  On the center extension, the
	density is uniformly bounded above and away from zero and its first
	derivative is uniformly bounded; hence the same two quantities are uniformly
	bounded there.  Their integrals over the fixed ball are therefore bounded
	independently of $k$.

	The pointwise estimates already proved also give
	\[
		\rho_{0,k}^\gamma
		+\rho_{0,k}\abs{\boldsymbol u_{0,k}}^2
		+\rho_{0,k}\abs{\boldsymbol u_{0,k}}^{2p}
		+\rho_{0,k}\abs{v_{0,k}}^2
		+\rho_{0,k}\abs{v_{0,k}}^{2q}
		\le C
	\]
	on $B_R$.  Integration proves
	$\sup_{0<k<k_0}\mathcal L_{0,k}<\infty$, completing
	\eqref{eq:uniform-data}.

	It remains only to record the boundary values.  The first follows directly from
	$W_0(0)=\Lambda_0$:
	\[
		\rho_{0,k}(R)^\alpha=k\Lambda_0.
	\]
	For the second, use the collar formula for $v_{0,k}$, together with
	$F_0(0)=0$ and $(W_0)_\zeta(0)=A_*$, to obtain
	\[
		v_{0,k}(R)
		=\frac{F_0(0)}{R^2}-R^2(W_0)_\zeta(0)
		=-R^2A_*<0.
	\]
	This proves \eqref{eq:wall-values} and finishes the proof.
\end{proof}

\begin{proposition}[Uniform end time estimates]
	\label{prop:uniform-density-upper}
	Assume \eqref{eq:exponent-range}.
	Let $(\rho_k,\boldsymbol u_k)$ be the maximal positive classical branch
	issued from the data of Lemma~\ref{lem:data}, with lifespan $T_k^*$.
	For every fixed $T_0>0$, there is a constant $C(T_0)$, independent of
	sufficiently small $k$, such that
	\begin{equation}
		\label{eq:uniform-HMZ-family-estimates}
		\sup_{0<k<k_0}\ \sup_{0\le t<\min\{T_k^*,T_0\}}
		\left(
		\norm{\rho_k(t)}_{L^\infty(B_R)}
		+\norm{\nabla\rho_k^s(t)}_{L^{2q}(B_R)}
		+\int_{B_R}\rho_k\abs{\boldsymbol u_k}^{2p}\dd x
		\right)
		\le C(T_0).
	\end{equation}
	In particular, for a constant $\overline\rho_{T_0}<\infty$ independent of
	small $k$,
	\begin{equation}
		\label{eq:uniform-density-upper}
		\sup_{0<k<k_0}\ \sup_{0\le t<\min\{T_k^*,T_0\}}
		\norm{\rho_k(t)}_{L^\infty(B_R)}
		\le\overline\rho_{T_0}.
	\end{equation}
\end{proposition}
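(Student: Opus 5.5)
The plan is to apply Proposition~\ref{prop:bounded-time} to each member of the family and check that every constant it produces is controlled by quantities that Lemma~\ref{lem:data} bounds uniformly in $k$. The statement of Proposition~\ref{prop:bounded-time} already gives
\[
\sup_{0\le t<\min\{T_k^*,T_0\}}\Bigl(\norm{\rho_k(t)}_{L^\infty}+\norm{\nabla\rho_k^s(t)}_{L^{2q}}+\int_{B_R}\rho_k\abs{\boldsymbol u_k}^{2p}\dd x\Bigr)\le C(T_0,R,\mathcal L_{0,k}),
\]
so the proof reduces to two things. First, $C(T_0,R,\cdot)$ must be monotone in the data functional. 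Second, \eqref{eq:uniform-data} supplies $\sup_{0<k<k_0}\mathcal L_{0,k}<\infty$. Setting $\overline{\mathcal L}:=\sup_k\mathcal L_{0,k}$, we get \eqref{eq:uniform-HMZ-family-estimates} with $C(T_0):=C(T_0,R,\overline{\mathcal L})$, and \eqref{eq:uniform-density-upper} is its first component.

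To justify that the constant depends only on $(T_0,R,\mathcal L_0)$ and not on any hidden quantity such as a lower density bound, the $H^3$ norms, or the fixed-$k$ smoothness, I will re-trace Steps~1--5 of the proof of Proposition~\ref{prop:bounded-time}.

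\begin{itemize}
\item Step~1 uses only the energy and BD identities. Its initial values are terms of $\mathcal L_0$: $\rho_0$, $\rho_0^\gamma$, $\rho_0|\boldsymbol u_0|^2$, $\rho_0|v_0|^2$. The coercivity of the dissipation uses only $\alpha>1$.
\item Step~2 uses Hölder and Sobolev inequalities with the fixed mass bound $M_k\le M_+$, which is controlled by $\mathcal L_0$ through $\int\rho_0$ and conservation of mass.
\item Step~3 uses the center weight \eqref{eq:hmz-center-weight} and the integrability condition $\gamma<3\alpha-1$. Both are $k$-independent because $p,q,s$ are fixed once by \eqref{eq:pq-range}.
\item Steps~4 and~5 use only the quantities bounded in the previous steps, together with $\rho_0|v_0|^{2q}$ and $|\nabla\rho_0^s|^{2q}$, which are again terms of $\mathcal L_0$.
\end{itemize}

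The estimates are linear Gronwall-type bounds. Their constants are nondecreasing in the initial energy-type quantities, so replacing $\mathcal L_{0,k}$ by $\overline{\mathcal L}$ is legitimate. No estimate in these steps uses the density lower bound $(k\Lambda_0)^{1/\alpha}$ at the wall, which tends to zero as $k\to0$. This is exactly why Step~4 of Lemma~\ref{lem:data} was carried out carefully using \eqref{eq:data-power-gradient}.

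The main obstacle is this bookkeeping: confirming that no constant in Steps~1--5 secretly depends on $\min\rho_0$ or on higher norms of the data. The delicate places are the weighted-gradient terms, where $\rho_0^{\alpha-1/2}$ and $\rho_0^s$ are differentiated at small density. There I will rely on the pointwise bounds $|\partial_r\rho_{0,k}^{\alpha-1/2}|^2\le C\rho_{0,k}$ and $|\partial_r\rho_{0,k}^s|^{2q}\le C\rho_{0,k}$ already established in Lemma~\ref{lem:data}. Step~6 of Proposition~\ref{prop:bounded-time} (the continuation argument) is not needed here, since only the a priori bounds on $[0,\min\{T_k^*,T_0\})$ are claimed.
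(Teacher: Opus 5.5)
Your proposal is correct and follows the paper's argument. The paper likewise applies Proposition~\ref{prop:bounded-time} to each fixed $k$, notes that the constant depends only on $T_0$, $R$ and $\mathcal L_{0,k}$, and invokes $\sup_{0<k<k_0}\mathcal L_{0,k}<\infty$ from \eqref{eq:uniform-data}; your re-tracing of Steps 1--5 is just extra verification of that dependence, with one small misattribution: Step 2 bounds $\int_0^R\rho^{\alpha-1/2}r^2\dd r$ through the pressure energy $\int_0^R\rho^\gamma r^2\dd r$, not the mass, which (itself bounded by $\mathcal L_{0,k}$) enters only through the interpolation in Step 5.
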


\begin{proof}
	Lemma~\ref{lem:data} gives
	$\sup_{0<k<k_0}\mathcal L_{0,k}<\infty$.  Apply
	Proposition~\ref{prop:bounded-time} to each branch with fixed $k$.  The
	dependence stated in \eqref{eq:bounded-time-estimates} is only through
	the common time bound and $\mathcal L_{0,k}$, so its constant is uniform
	in small $k$.  This proves \eqref{eq:uniform-HMZ-family-estimates}, and
	\eqref{eq:uniform-density-upper} follows immediately.
\end{proof}

\subsection{Persistence of negative flux on an observation collar}
We choose a growing observation range.  Let
\begin{equation}
	\label{eq:chi-range}
	\frac{\alpha+1}{2\alpha}<\chi<1,
	\qquad K_k=k^{-\chi}.
\end{equation}
For $k_0$ sufficiently small we have 
\[
2k K_k <\delta_2 \qquad I_k:=\left[\frac12K_k,\frac32K_k\right]\subset\left[L_0,\frac{\delta_2}{k}\right]
\]

\begin{proposition}[Negative observation flux]
	\label{prop:negative-observation-flux}
	Assume \eqref{eq:exponent-range}, and fix $\Theta>0$.  After reducing $k_0$ if
	necessary, for every
	$0<k<k_0(\Theta)$ and
	\[
		0\le\theta<\min\{\Theta,T_k^*/k^\beta\},
	\]
	\begin{equation}
		\label{eq:negative-observation-flux}
		-F(\theta,K_k)\ge\frac{-J}{2}>0.
	\end{equation}
\end{proposition}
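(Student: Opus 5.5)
The plan is a \emph{localized} comparison argument for the scaled flux equation \eqref{eq:scaled-flux} on the observation window $I_k=[\tfrac12K_k,\tfrac32K_k]$. The initial flux sits exactly on the plateau there: $F(0,\zeta)=J$ for $\zeta\in[L_0,\delta_2/k]\supset I_k$. The source is small, $|\Scal|\le C_\Theta k^\beta$ for $\theta<\min\{\Theta,T_k^*/k^\beta\}$, by \eqref{eq:global-source-bound}. This requires $k^\beta\Theta\le\tau_{\rm ang}$, which holds for small $k$ because $X_0$ is uniform by \eqref{eq:uniform-data} and $\overline\rho$ is uniform by Proposition~\ref{prop:uniform-density-upper}. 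Lemma~\ref{lem:global-range} already gives the global bounds $J-Ck^\beta\theta\le F\le Ck^\beta\theta$. So the only danger is that the diffusion $\alpha\tilde r^4(W^\nu F_\zeta)_\zeta$ transports the zero boundary value $F(\theta,0)=0$, or the boundary-layer profile on $[0,L_0]$, into the window by time $\Theta$. The heuristic reason this does not happen is that the diffusivity near $\zeta$ scales like $W^\nu\sim\zeta^\nu$ with $\nu=1+1/\alpha<2$. The diffusive spreading distance over a rescaled time $\Theta$ near $\zeta\sim K_k$ is therefore of order $K_k^{\nu/2}\ll K_k$ as $k\to0$. The restriction $\chi>(\alpha+1)/(2\alpha)=\nu/2$ in \eqref{eq:chi-range} should be exactly what makes the relevant error terms vanish.

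\textbf{Step 1: coefficient bounds on a doubled window.} I would first control $W$ on $\widehat I_k=[\tfrac14K_k,\tfrac74K_k]$. The effective-velocity identity $(\rho^\alpha)_\delta=-(v-u)/r^2$ gives, in scaled variables,
\[
W_\zeta=\frac{F-H}{\tilde r^4}.
\]
Lemma~\ref{lem:global-range} gives $|H-H_0|\le Ck^\beta\theta$ and the bounds on $F$ above. On the plateau, $H_0-F_0=-\tilde r_0^4a_0$. Together with \eqref{eq:r-ratio} and $\tilde r_0\ge 3R/4$ there, this yields $\tfrac12a_0\le W_\zeta\le 2A_*+C$ on $\widehat I_k$ for small $k$. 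For the absolute size of $W$, I would integrate from the window rather than from the wall: $W(\theta,\zeta)=W(\theta,\tfrac14K_k)+\int W_\zeta$. I would bound $W(\theta,\tfrac14K_k)=k^{-1}\rho^\alpha$ by following the fixed label. Along it, $\tfrac{d}{dt}\log\rho=-d$. Here $d=u_r+2\U$, and $u_r=\tilde r^{-2}(F_\zeta\zeta_r)-2\U$ involves $F_\zeta$. I expect instead to use the reciprocal-volume relation \eqref{eq:weighted-mass-volume} together with Theorem~\ref{thm:wall-independent}, which gives the lower bound $W\ge c\,\zeta$ from $X\le C_X$. For the upper bound $W\le C\zeta$ on $\widehat I_k$, I would combine $W_\zeta\ge a_0/2$ with the fact that $\int_0^{\zeta}W^{-1/\alpha}\,d\zeta'$ is controlled by the physical volume $k^{-\beta}(R^3-\tilde r^3)/3$, which stays close to its initial value by \eqref{eq:r-ratio}. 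This pins $W$ to $[c\zeta,C\zeta]$ on $\widehat I_k$ for all $\theta<\Theta_*$.

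\textbf{Step 2: localized barrier.} With $\zeta_0=K_k$ and $\ell=K_k/2$, I would use the supersolution
\[
B(\theta,\zeta)=J+\frac{|J|}{4}\frac{(\zeta-\zeta_0)^2}{\ell^2}+\lambda\theta ,
\]
so that $F\le B$ on $\widehat I_k$. Initially $F=J\le B$. On the window edges $|\zeta-\zeta_0|\ge\ell$, and using the global bound $F\le Ck^\beta\theta$ would require $B\ge0$ there, which does not hold. I would therefore enlarge the quadratic coefficient to $2|J|$ and use the window $[\tfrac14K_k,\tfrac74K_k]$ with half-width $\tfrac34K_k$. The edge values of $B$ then exceed $0$, while $B(\theta,K_k)=J+\lambda\theta$. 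Next I would check $B_\theta-\alpha\tilde r^4(W^\nu B_\zeta)_\zeta\ge\Scal$. Here
\[
|(W^\nu B_\zeta)_\zeta|\le C|J|\bigl(W^\nu\ell^{-2}+W^{\nu-1}W_\zeta\ell^{-1}\bigr)\le C K_k^{\nu-2},
\]
so it suffices to take
\[
\lambda=C\bigl(K_k^{\nu-2}+k^\beta\bigr)=C\bigl(k^{\chi(2-\nu)}+k^\beta\bigr)\to0.
\]
The comparison itself is the same $\varepsilon$-maximum argument used in Lemma~\ref{lem:global-range}. It then gives $F(\theta,K_k)\le J+\lambda\Theta\le J/2$ once $k<k_0(\Theta)$, which is \eqref{eq:negative-observation-flux}.

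\textbf{Main obstacle.} The hard part is Step 1, namely obtaining the two-sided bound $W\simeq\zeta$ on the window uniformly up to $\Theta_*$ without any lower density bound at the wall. I would try first the route through Theorem~\ref{thm:wall-independent}, which gives $W\ge c\zeta$ directly and uniformly since the constants are uniform in $k$ by Proposition~\ref{prop:uniform-density-upper}. I would then use the bounds on $W_\zeta$ from the $H$/$F$ control to get $W\le C\zeta$. If that upper bound is only available as $W\le C(W(\theta,0)+\zeta)$, I would instead bound $W(\theta,0)$ via the wall ODE \eqref{eq:intro-wall-ode} and $\rho_t(t,R)=-\rho u_r(t,R)$. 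It is at this step that I expect the lower limit $\chi>\nu/2$ to be used, to absorb the errors, with $\chi<1$ keeping the window inside the plateau.
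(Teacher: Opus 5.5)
Your argument is correct. It localizes to the same window as the paper, but it uses a different barrier. The paper compares $U=-F$ on $I_k$ with the subsolution $(-J)e^{-\lambda_k\theta}\psi_k-C_\Theta k^\beta\theta$, where $\psi_k=\sin^2\bigl(\pi(\zeta-K_k/2)/K_k\bigr)$. It absorbs the drift term $\nu W^{\nu-1}W_\zeta\psi_{k,\zeta}$ by completing the square, which leaves $W^\nu K_k^{-2}+W^{\nu-2}W_\zeta^2$. Because $\nu-2<0$, this needs a \emph{lower} bound $W\ge cK_k$ on $I_k$, which the paper gets from $W(\theta,0)>0$ and $W_\zeta\ge a_0-C_\Theta k^\beta$ integrated from the wall. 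The first term is handled by the crude bound $W\le C/k$, and this is exactly where $2\chi>\nu$ is used.

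Your quadratic supersolution instead bounds the drift in absolute value. So only \emph{upper} bounds on $W$ and $\abs{W_\zeta}$ enter, and the $\varepsilon$-maximum argument needs only $W>0$. The ``main obstacle'' in your Step~1 therefore disappears: you never use $W\ge c\zeta$. The crude bound $W=k^{-1}\rho^\alpha\le\overline\rho_{T_0}^{\,\alpha}/k$ from Proposition~\ref{prop:uniform-density-upper}, together with $\abs{W_\zeta}\le C_\Theta$ on the window, already gives
\[
\lambda\le C_\Theta\bigl(k^{2\chi-\nu}+k^{\chi-1/\alpha}+k^\beta\bigr)\longrightarrow0,
\]
since $\chi>\nu/2>1/\alpha$ when $\alpha>1$. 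Thus \eqref{eq:chi-range} enters your argument through the same term $W^\nu\ell^{-2}$ as in the paper.

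Two of your Step~1 routes should be dropped rather than repaired.
\begin{itemize}
\item Theorem~\ref{thm:wall-independent} does not supply a $k$-uniform constant. Its $C_X$ depends on the initial data, for instance through $\sup\Y_+(0)$ in Proposition~\ref{prop:Y-bounds}. For these data, on the plateau one has $(v_{0,k})_r/\rho_{0,k}=(2\abs{J}/r^3-2ra_0)/\rho_{0,k}$, which is of order $k^{-1/\alpha}$ when $\abs{J}>R^4A_*/2$.
\item The wall-ODE route needs $u_r(t,R)$, and the bounds on $\mathscr U,\mathscr V$ do not control it.
\end{itemize}
Your volume argument does work once the slope bounds are extended down to $\zeta=0$: it gives $W(\theta,0)\le CK_k$. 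With $W\le CK_k$ your $\lambda\sim K_k^{-\beta}+k^\beta$, so the restriction $\chi>\nu/2$ would become unnecessary for this proposition. That extra work is not needed here, though.
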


\begin{proof}
	We divide the proof into four steps.
	
	\smallskip
	\noindent
	\emph{Step 1: geometric control of the observation collar.}
	We first show that $\tilde r_0$ and $\tilde r$ remain uniformly away from
	zero for $0\le\zeta\le2K_k$ and
	$\theta<\min\{\Theta,T_k^*/k^\beta\}$.
	
	The density upper bound from
	Proposition~\ref{prop:uniform-density-upper}, the uniform initial bounds of
	Lemma~\ref{lem:data}, and Proposition~\ref{prop:normalized} give a common
	control time $\tau_{\rm ang}>0$ for the transverse strain rate functions.  Because
	$k^\beta\Theta\to0$, we may reduce $k_0$ so that
	\[
		k^\beta\Theta\le\tau_{\rm ang}
	\]
	for every $0<k<k_0$.  Consequently, on the time interval in the
	statement, equation~\eqref{eq:source-small} holds with a constant depending on $\Theta$ but not on $k$.

	The initial construction gives $J\le F_0\le0$ for all $0<\zeta<M_k/k$: this is \eqref{eq:data-ranges} on $[r_{2,k},R]$ and follows from
	$F_0=JQ_{\rm c}(r/r_{2,k})$ on the center extension.  Hence
	Lemma~\ref{lem:global-range} gives the global estimates
	\eqref{eq:F-range}--\eqref{eq:r-ratio} on the time interval in the
	statement.

	We next estimate the radius and density slope on $[0, 2 K_k]$. From
	$W_0(\zeta)\ge\Lambda_0+a_0\zeta$ and the initial radius formula,
	\begin{equation}
		\label{eq:proof-initial-radius}
		R^3-\tilde r_0(\zeta)^3
		=3k^\beta\int_0^\zeta W_0(s)^{-1/\alpha}\dd s
		\le Ck^\beta(1+\zeta^\beta).
	\end{equation}
	Thus, on $0\le\zeta\le2K_k$,
	\[
		\abs{R-\tilde r_0(\zeta)}
		\le Ck^\beta K_k^\beta
		=Ck^{\beta(1-\chi)}=o(1).
	\]
	Together with \eqref{eq:r-ratio}, this gives
	$\tilde r_0\ge3R/4$ and $\tilde r\ge R/2$ there for all sufficiently
	small $k_0(\Theta)$.

	\smallskip
	\noindent
	\emph{Step 2: uniform positive lower bound for $W_\zeta$.}
	The identity for the effective flux is
	\begin{equation}
		\label{eq:proof-slope-identity}
		W_\zeta=\frac{F-H}{\tilde r^4}.
	\end{equation}
	On the fixed transition $0\le\zeta\le L_0$, the definition of the initial
	data gives
	\[
		H_0=F_0-\tilde r_0^4
		\left(A_*+\frac{F_0}{R^4}\right).
	\]
	Using \eqref{eq:F-range}--\eqref{eq:r-ratio} and the fact that
	$\tilde r_0=R+O(k^\beta)$ on this fixed transition, we obtain
	\[
	\begin{aligned}
		F-H
		&\ge J-F_0
		+\tilde r_0^4\left(A_*+\frac{F_0}{R^4}\right)
		-C_\Theta k^\beta\\
		&=a_0R^4+O(k^\beta)-C_\Theta k^\beta.
	\end{aligned}
	\]
	Dividing by $\tilde r^4$ gives
	\[
	\begin{aligned}
		W_\zeta
		&\ge
		\frac{a_0R^4-C_\Theta k^\beta}{\widetilde r^4}\\
		&=
		a_0\left(\frac R{\widetilde r}\right)^4
		-\frac{C_\Theta k^\beta}{\widetilde r^4}.
	\end{aligned}
	\]
	Since $\left(\frac R{\widetilde r}\right)^4=1+O_\Theta(k^\beta)$. We get
	\[
	W_\zeta\ge a_0-C_\Theta k^\beta,
	\qquad 0\le\zeta\le L_0.
	\]
	On the plateau $L_0\le\zeta\le2K_k$, one instead has
	$H_0=J-\tilde r_0^4a_0$, and hence
	\[
	\begin{aligned}
		F-H
		&\ge J-H_0-C_\Theta k^\beta\\
		&=J-\left(J-a_0\widetilde r_0^4\right)
		-C_\Theta k^\beta\\
		&=a_0\widetilde r_0^4-C_\Theta k^\beta.
	\end{aligned}
	\]
	Dividing by $\widetilde r^4$ gives
	\[
		W_\zeta
		\ge a_0\left(\frac{\tilde r_0}{\tilde r}\right)^4
		-C_\Theta k^\beta.
	\]
	Therefore
	\[
	\begin{aligned}
		W_\zeta
		&\ge
		a_0(1-C_\Theta k^\beta)-C_\Theta k^\beta\\
		&\ge a_0-C_\Theta k^\beta.
	\end{aligned}
	\]
	The same identities give the corresponding upper bound.  We have therefore
	proved
	\begin{equation}
		\label{eq:proof-slope-bound}
		a_0-\varepsilon_{k,\Theta}
		\le W_\zeta(\theta,\zeta)\le C_\Theta,
		\qquad 0\le\zeta\le2K_k,
		\qquad
		\varepsilon_{k,\Theta}\le C_\Theta k^\beta.
	\end{equation}
	In particular,
	\begin{equation}
		\label{eq:proof-slope-capacity-error}
		\varepsilon_{k,\Theta}K_k^\beta
		\le C_\Theta k^{\beta(1-\chi)}\longrightarrow0.
	\end{equation}

	Because $I_k \subset [L_0, \frac{\delta_2}{k}]$, and because
	$\varepsilon_{k,\Theta}\to0$, as $k\rightarrow0$ the positivity of $W(\theta,0)$ and \eqref{eq:proof-slope-bound} imply, for $\zeta\in I_k$,
	\begin{equation}
		\label{eq:proof-W-observation}
		W(\theta,\zeta)
		=W(\theta,0)+\int_0^\zeta W_\eta(\theta,\eta)\dd\eta
		\ge\frac{a_0}{2}\zeta\ge cK_k.
	\end{equation}
	On $I_k$ we consequently have
	$W\ge cK_k$, $\abs{W_\zeta}\le C_\Theta$, and
	$\tilde r\ge R/2$.

	\smallskip
	\noindent
	\emph{Step 3: a vanishing barrier eigenvalue.}
	Define on $I_k$
	\[
		\psi_k(\zeta)
		=\sin^2\left(\frac\pi{K_k}
		\left(\zeta-\frac12K_k\right)\right).
	\]
	Put $c_k=\pi/K_k$ and
	$Q_k=c_k\cot(c_k(\zeta-K_k/2))$.  Then
	\[
		\frac{\psi_{k,\zeta}}{\psi_k}=2Q_k,
		\qquad
		\frac{\psi_{k,\zeta\zeta}}{\psi_k}
		=2(Q_k^2-c_k^2).
	\]
	A direct calculation followed by completion of the square in $Q_k$ gives
	\begin{equation}
		\label{eq:proof-barrier-square}
		-\frac{\alpha\tilde r^4
		(W^\nu\psi_{k,\zeta})_\zeta}{\psi_k}
		\le C\left(
		W^\nu K_k^{-2}+W^{\nu-2}W_\zeta^2\right).
	\end{equation}
	Proposition~\ref{prop:uniform-density-upper} gives $W\le C/k$.  Since
	$\nu-2=-\beta<0$, the estimates on $I_k$ imply that the right-hand side of
	\eqref{eq:proof-barrier-square} is at most
	\begin{equation}
		\label{eq:proof-lambda-k}
		\lambda_k
		:=C_\Theta\left(k^{2\chi-\nu}+K_k^{-\beta}\right)
		\longrightarrow0.
	\end{equation}
	Here $2\chi>\nu$ is exactly the lower inequality in
	\eqref{eq:chi-range}.

	\smallskip
	\noindent
	\emph{Step 4: comparison principle for the uniformly parabolic operator.}
	Finally set $U=-F$.  Equations \eqref{eq:scaled-flux} and
	\eqref{eq:source-small} give
	\[
		U_\theta-\alpha\tilde r^4(W^\nu U_\zeta)_\zeta
		\ge-C_\Theta k^\beta.
	\]
	After increasing $C_\Theta$ if necessary, define
	\[
		\underline U(\theta,\zeta)
		=(-J)e^{-\lambda_k\theta}\psi_k(\zeta)
		-C_\Theta k^\beta\theta.
	\]
	Equations \eqref{eq:proof-barrier-square}--\eqref{eq:proof-lambda-k}
	show that $\underline U$ is a subsolution on $I_k$.  At the two lateral
	endpoints, $\psi_k=0$, and \eqref{eq:F-range} gives
	$\underline U\le-F=U$.  Initially, $I_k$ lies in the flux plateau, so
	$U(0,\zeta)=-J$ and $\underline U(0,\zeta)\le U(0,\zeta)$.
	The operator is uniformly parabolic on $I_k$ by
	\eqref{eq:proof-W-observation}.  The comparison principle therefore gives,
	at the midpoint where $\psi_k(K_k)=1$,
	\begin{equation}
		\label{eq:proof-observation-prelimit}
		-F(\theta,K_k)
		\ge(-J)e^{-\lambda_k\theta}-C_\Theta k^\beta\theta.
	\end{equation}
	Since $\lambda_k\to0$ and $k^\beta\to0$, a final reduction of $k_0$
	makes the right-hand side of \eqref{eq:proof-observation-prelimit} at least
	$(-J)/2$, uniformly for $0\le\theta\le\Theta$.  This proves
	\eqref{eq:negative-observation-flux}.
\end{proof}

\subsection{The contradiction from finite capacity}

\begin{proposition}[Endpoint from finite reciprocal capacity]
	\label{prop:finite-capacity}
	There is a constant $\Theta_{\rm b}>0$, independent of $k$, such that,
	after reducing $k_0$ if necessary, the maximal lifespan with positive density
	satisfies
	\begin{equation}
		\label{eq:finite-endpoint}
		0<T_k^*\le \Theta_{\rm b}k^\beta
		=\Theta_{\rm b}k^{(\alpha-1)/\alpha}<\infty.
	\end{equation}
\end{proposition}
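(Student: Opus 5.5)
Argue by contradiction: suppose $T_k^*>\Theta_{\rm b}k^\beta$ for a $\Theta_{\rm b}$ to be chosen independently of $k$, and apply Proposition~\ref{prop:negative-observation-flux} with $\Theta=\Theta_{\rm b}$. Then on the whole rescaled interval $0\le\theta<\Theta_{\rm b}$ the flux through the observation label satisfies $-F(\theta,K_k)\ge(-J)/2$. The plan is to convert this persistent inward flux into a definite loss of mass (in $\zeta$-units) near the boundary, and to show that this loss is incompatible with the finite reciprocal capacity of the collar $[0,K_k]$. The capacity is controlled by the slope bound \eqref{eq:proof-slope-bound}, $W_\zeta\ge a_0-\varepsilon_{k,\Theta}$, which also holds on this interval.

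\textbf{Step 1 (volume of the collar).} Integrating the kinematic identity \eqref{eq:scaled-kinematic} from the boundary label, as in \eqref{eq:weighted-mass-volume}, gives
\[
R^3-\tilde r(\theta,K_k)^3=3k^\beta\int_0^{K_k}W(\theta,s)^{-1/\alpha}\dd s .
\]
Since $\tilde r_\theta=k^\beta F/\tilde r^2$, we have $\partial_\theta\tilde r^3=3k^\beta F$. Combined with the flux bound, this gives
\[
\frac{d}{d\theta}\bigl(R^3-\tilde r(\theta,K_k)^3\bigr)=-3k^\beta F(\theta,K_k)\ge\tfrac32k^\beta(-J).
\]
Hence the rescaled capacity $\mathcal C(\theta):=\int_0^{K_k}W^{-1/\alpha}\dd s$ grows at least linearly: $\mathcal C(\theta)\ge\mathcal C(0)+\tfrac12(-J)\theta$. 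This is the physical fact that the label $K_k$ moves inward while the wall stays fixed.

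\textbf{Step 2 (capacity upper bound).} Since $W(\theta,0)>0$ and $W_\zeta\ge a_0-\varepsilon_{k,\Theta}$ on $[0,2K_k]$, we get $W(\theta,s)\ge(a_0-\varepsilon_{k,\Theta})s$. Therefore
\[
\mathcal C(\theta)\le(a_0-\varepsilon_{k,\Theta})^{-1/\alpha}\,\frac{K_k^\beta}{\beta}.
\]
This bound is useless as it stands, because $K_k^\beta\to\infty$. The sharper version compares with the initial capacity. By \eqref{eq:data-collar-bounds}, $\mathcal C(0)\ge\int_0^{K_k}(\Lambda_0+A_*s)^{-1/\alpha}\dd s$. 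The idea is that the initial profile on the plateau already has slope essentially $a_0$, so $\mathcal C(0)$ is close to the maximal value $a_0^{-1/\alpha}K_k^\beta/\beta$. To arrange this I would use $\mathcal C(0)\ge\int_{L_0}^{K_k}(W_0(L_0)+a_0(s-L_0))^{-1/\alpha}\dd s$, which equals $a_0^{-1/\alpha}K_k^\beta/\beta-O(1)$. The upper bound differs from $a_0^{-1/\alpha}K_k^\beta/\beta$ by $O(\varepsilon_{k,\Theta}K_k^\beta)+O(1)$, and $\varepsilon_{k,\Theta}K_k^\beta\to0$ by \eqref{eq:proof-slope-capacity-error}. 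Thus
\[
\mathcal C(\theta)-\mathcal C(0)\le C_1+o(1),
\]
with $C_1$ depending only on $a_0,A_*,\Lambda_0,L_0$ and not on $\Theta$. The constant $C_1$ must not depend on $\Theta$; this holds because the $o(1)$ term vanishes as $k\to0$ for each fixed $\Theta$.

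\textbf{Step 3 (conclusion).} Choose $\Theta_{\rm b}>2(C_1+1)/(-J)$, then reduce $k_0=k_0(\Theta_{\rm b})$. The lower bound from Step 1 at $\theta$ close to $\Theta_{\rm b}$ then contradicts the upper bound from Step 2. Hence $T_k^*\le\Theta_{\rm b}k^\beta$. Positivity $T_k^*>0$ follows from Proposition~\ref{prop:local-restart}.

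\textbf{Main obstacle.} The delicate point is the $O(1)$ bookkeeping in Step 2. The leading terms of order $K_k^\beta$ must cancel exactly, which requires the slope error to enter only as $\varepsilon_{k,\Theta}K_k^\beta$. The contributions from the transition region $[0,L_0]$ and from the unknown boundary value $W(\theta,0)\ge0$ must be uniformly bounded: dropping $W(\theta,0)$ only increases the upper bound, by a bounded amount over $[0,L_0]$ since $\int_0^{L_0}s^{-1/\alpha}\dd s<\infty$. I would take care to write $W(\theta,s)\ge W(\theta,0)+(a_0-\varepsilon)s\ge(a_0-\varepsilon)s$ and expand $(a_0-\varepsilon)^{-1/\alpha}$ to first order in $\varepsilon$.
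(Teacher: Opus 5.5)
Your proposal is correct and follows the paper's argument. The paper derives the same capacity identity $\frac{d}{d\theta}\int_0^{K_k}W^{-1/\alpha}\dd\zeta=-F(\theta,K_k)$ directly from the scaled continuity equation $W_\theta=\alpha W^\nu F_\zeta$, where you use the $\tilde r^3$ volume identity, and it does the $O(1)$ bookkeeping via $W_0\le a_0\zeta+C_{\rm e}$ and the explicit gap $D_\infty=C_{\rm e}^{\beta}/(a_0\beta)$, where you use the exact plateau integral on $[L_0,K_k]$; both cancel the $K_k^\beta$ terms with the slope error entering only through $\varepsilon_{k,\Theta}K_k^\beta\to0$, so your $\Theta$-independent choice of $\Theta_{\rm b}$ and the contradiction near $\theta=\Theta_{\rm b}$ go through as in the paper.
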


\begin{proof}
	We first record the affine capacity bounds used in the contradiction.  From
	\eqref{eq:W0},
	\[
		W_0(\zeta)
		=\Lambda_0+A_*\zeta+\frac1{R^4}\int_0^\zeta F_0(s)\dd s.
	\]
	For $L_0\le\zeta\le2K_k$, the plateau identity $F_0=J$ gives
	\[
		W_0(\zeta)=a_0\zeta+C_{{\rm e},k},
	\]
	where $C_{{\rm e},k}=\Lambda_0 + \frac1{R^4}\int_0^{L_0}\bigl(F_0(s)-J\bigr)\,ds$.  Since the correction $E_k$ is uniformly
	bounded on that interval, we may choose one constant $C_{\rm e}>0$,
	independent of small $k$, such that
	\begin{equation}
		\label{eq:capacity-W0-upper}
		W_0(\zeta)\le a_0\zeta+C_{\rm e},
		\qquad 0\le\zeta\le2K_k.
	\end{equation}

	Because $1/\alpha<1$, the reciprocal capacity gap is finite.  Indeed, with $\beta=1-1/\alpha$ and $x=a_0\zeta$,
	\begin{align}
		D_\infty
		&:=\int_0^\infty\left[
		(a_0\zeta)^{-1/\alpha}
		-(a_0\zeta+C_{\rm e})^{-1/\alpha}
		\right]\dd\zeta
	\notag\\
		&=\frac1{a_0}\lim_{L\to\infty}
		\left[
		\frac{x^\beta-(x+C_{\rm e})^\beta}{\beta}
		\right]_{x=0}^{x=L}
		=\frac{C_{\rm e}^{\,\beta}}{a_0\beta}<\infty.
	\label{eq:capacity-D-infinity}
	\end{align}
	Fix
	\begin{equation}
		\label{eq:capacity-Theta-b}
		\Theta_{\rm b}:=\frac{2(D_\infty+2)}{-J}.
	\end{equation}
	Proposition~\ref{prop:negative-observation-flux}, applied with
	$\Theta=\Theta_{\rm b}$, gives
	\begin{equation}
		\label{eq:capacity-flux-half}
		-F(\theta,K_k)\ge\frac{-J}{2},
		\qquad
		0\le\theta<
		\min\{\Theta_{\rm b},T_k^*/k^\beta\}.
	\end{equation}

	The scaled continuity equation, obtained directly from the first equation
	of \eqref{eq:radial-system}, is
	\begin{equation}
		\label{eq:capacity-scaled-continuity}
		W_\theta=\alpha W^\nu F_\zeta.
	\end{equation}
	Therefore, wherever the solution remains positive,
	\[
		\partial_\theta W^{-1/\alpha}
		=-\frac1\alpha W^{-1/\alpha-1}W_\theta
		=-F_\zeta.
	\]
	Since $F(\theta,0)=0$, integration over $[0,K_k]$ yields
	\begin{equation}
		\label{eq:capacity-identity}
		\frac{\dd}{\dd\theta}
		\int_0^{K_k}W(\theta,\zeta)^{-1/\alpha}\dd\zeta
		=-F(\theta,K_k).
	\end{equation}

	Suppose for contradiction that the positive branch survives beyond the
	target rescaled time $\Theta_{\rm b}$.  Integrating \eqref{eq:capacity-identity} gives
	\begin{equation}
		\label{eq:capacity-forced-increase}
		\begin{aligned}
			\int_0^{K_k}W(\Theta_b,\zeta)^{-1/\alpha}\dd\zeta-\int_0^{K_k}W_0(\zeta)^{-1/\alpha}\dd \zeta
			&=
			\int_0^{\Theta_{\rm b}}
			-F(\theta,K_k)\,d\theta\\
			&\ge
			\int_0^{\Theta_{\rm b}}\frac{-J}{2}\,d\theta
			=
			\frac{-J}{2}\Theta_{\rm b}
			=D_\infty+2.
		\end{aligned}
	\end{equation}

	On the other hand, let
	$a_k=a_0-\varepsilon_{k,\Theta_{\rm b}}$.  The intermediate slope estimate
	\eqref{eq:proof-slope-bound} gives $a_k\ge a_0/2$ for small $k$ and
	\[
		W(\theta,\zeta)
		=W(\theta,0)+\int_0^\zeta W_\eta(\theta,\eta)\dd\eta
		\ge a_k\zeta.
	\]
	Combining this with \eqref{eq:capacity-W0-upper}, we obtain
	\begin{equation}
		\label{eq:capacity-max-increase}
		\begin{aligned}
			&\int_0^{K_k}W(\theta,\zeta)^{-1/\alpha}\dd\zeta
			-\int_0^{K_k}W_0(\zeta)^{-1/\alpha}\dd\zeta \\
			&\quad\le
			\int_0^{K_k}
			\left[
			(a_k\zeta)^{-1/\alpha}
			-(a_0\zeta+C_{\rm e})^{-1/\alpha}
			\right]d\zeta \\
			&\quad=
			\int_0^{K_k}
			\left[
			(a_0\zeta)^{-1/\alpha}
			-(a_0\zeta+C_{\rm e})^{-1/\alpha}
			\right]+
			\left[
			(a_k\zeta)^{-1/\alpha}
			-(a_0\zeta)^{-1/\alpha}
			\right]d\zeta \\
			&\quad\le D_\infty
			+\frac{a_k^{-1/\alpha}-a_0^{-1/\alpha}}{\beta}K_k^\beta.
		\end{aligned}
	\end{equation}
	The map $a\mapsto a^{-1/\alpha}$ is Lipschitz near $a_0$, and hence the
	last term is bounded by
	\[
		C\varepsilon_{k,\Theta_{\rm b}}K_k^\beta=o(1)
	\]
	by \eqref{eq:proof-slope-capacity-error}.  It is at most $1$ for small $k$,
	so \eqref{eq:capacity-max-increase} permits an increase of at most
	$D_\infty+1$, contradicting
	\eqref{eq:capacity-forced-increase}.  This proves
	\eqref{eq:finite-endpoint}.

	We will also need the following consequence in the terminal argument:
	\begin{equation}
		\label{eq:density-loss}
		\liminf_{t\uparrow T_k^*}
		\min_{\overline{B_R}}\rho_k(t)=0.
	\end{equation}
	This is precisely the criterion for density loss
	\eqref{eq:bounded-time-density-loss} from
	Proposition~\ref{prop:bounded-time}, because
	\eqref{eq:finite-endpoint} has just shown that $T_k^*<\infty$.
\end{proof}

\subsection{Terminal trace and localization at the boundary}

For the exponents $p,q$ fixed in \eqref{eq:pq-range}, set
\begin{equation}
	\label{eq:terminal-exponents}
	s=\alpha-1+\frac1{2q},
	\qquad
	\kappa=\frac{1-\frac3{2q}}{\max\{1,s\}}>0,
	\qquad
	\vartheta=\frac{\kappa}{\kappa+1+\frac3{2p}}>0.
\end{equation}
Use the remaining freedom in \eqref{eq:chi-range} to impose also
\begin{equation}
	\label{eq:chi-localization}
	1-(\alpha-1)\vartheta<\chi<1.
\end{equation}
This is compatible with \eqref{eq:chi-range} because both lower endpoints are
strictly smaller than one.  No estimate used earlier for the observation
collar or capacity argument is changed.

\begin{proposition}[Terminal trace and localization at the boundary]
	\label{prop:terminal-wall}
	Choose $\chi$ satisfying both \eqref{eq:chi-range} and
	\eqref{eq:chi-localization}.  For every sufficiently small $k$, there is a
	unique nonnegative, nontrivial function
	$\rho_{k,*}\in C^{0,\kappa}(\overline{B_R})$ such that
	\begin{equation}
		\label{eq:terminal-wall-result}
		\rho_k(t)\longrightarrow\rho_{k,*}
		\quad\hbox{uniformly on }\overline{B_R}
		\quad\hbox{as }t\uparrow T_k^*,
		\qquad
		\{r\in[0,R]:\rho_{k,*}(r)=0\}=\{R\}.
	\end{equation}
\end{proposition}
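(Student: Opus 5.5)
Our plan has three parts: a uniform Hölder bound in space, a Hölder-type modulus in time, and a localization argument for the zero set.

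\emph{Step 1: spatial Hölder bound.} Proposition~\ref{prop:uniform-density-upper} gives, uniformly for $t<T_k^*\le\Theta_{\rm b}k^\beta$,
\[
\|\nabla\rho_k^s(t)\|_{L^{2q}(B_R)}\le C,
\qquad \|\rho_k(t)\|_{L^\infty}\le\overline\rho.
\]
Since $2q>3$, Morrey's inequality places $\rho_k^s$ in $C^{0,1-3/(2q)}(\overline{B_R})$ uniformly in $t$. Composing with $x\mapsto x^{1/s}$ (Hölder of order $\min\{1,1/s\}$ on $[0,\overline\rho^{\,s}]$) puts $\rho_k(t)$ in $C^{0,\kappa}$ uniformly in $t$, with $\kappa$ as in \eqref{eq:terminal-exponents}.

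\emph{Step 2: time modulus.} We estimate $\|\rho_k(t)-\rho_k(t')\|_{L^\infty}$ by interpolation. The continuity equation gives $\partial_t\rho=-\operatorname{div}(\rho\boldsymbol u)$. Testing against a mollifier at scale $h$ and using $\int\rho|\boldsymbol u|^{2p}\le C$ gives
\[
|(\rho(t)-\rho(t'))*\varphi_h|\le C|t-t'|\,h^{-1-3/(2p)}.
\]
We treat the boundary by extending $\rho$, or by using one-sided averages in the radial variable. Combining with the spatial Hölder bound of Step 1,
\[
\|\rho(t)-\rho(t')\|_\infty\le C\bigl(h^\kappa+|t-t'|h^{-1-3/(2p)}\bigr).
\]
Optimizing in $h$ gives the modulus $C|t-t'|^\vartheta$, with $\vartheta$ exactly as in \eqref{eq:terminal-exponents}. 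The family is therefore uniformly Cauchy as $t\uparrow T_k^*<\infty$. The uniform limit $\rho_{k,*}$ is unique, nonnegative and in $C^{0,\kappa}$. It is nontrivial because mass is conserved: $\int\rho_{k,*}r^2\,dr=M_k\ge M_->0$.

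\emph{Step 3: zero set.} By \eqref{eq:density-loss} and uniform convergence, $\min\rho_{k,*}=0$. Theorem~\ref{thm:wall-independent} applies: the density is bounded above, $T_k^*<\infty$, and the terminal profile is a uniform limit with a zero. It therefore gives directly
\[
\{r\in[0,R]:\rho_{k,*}(r)=0\}=\{R\}.
\]
On this reading, the extra condition \eqref{eq:chi-localization} is not needed for the localization itself. Its role would be to show independently that the zero is actually at $R$ and not merely somewhere, and Theorem~\ref{thm:wall-independent} already covers that. As a cross-check we would still verify $\rho_{k,*}(R)=0$ directly. Suppose instead that $\rho_{k,*}(R)=c>0$. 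Then the Hölder time modulus over the window $T_k^*-t\lesssim k^\beta$ keeps $W=k^{-1}\rho^\alpha$ large on the boundary collar. That conflicts with the capacity increase forced by Proposition~\ref{prop:negative-observation-flux}, and \eqref{eq:chi-localization} is exactly what makes the error term $k^{(\alpha-1)\vartheta}K_k$-type smaller than the gain.

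\emph{Main obstacle.} We expect the hardest step to be the time-modulus interpolation near the boundary $r=R$. The density there is degenerate, and the mollification must respect the domain. Our first attempt would be an interior average on a radial interval of length $h$ located on the inner side.
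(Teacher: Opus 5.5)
Your Steps 1 and 2 are the paper's argument: Morrey for $\rho_k^s$, the power map $x\mapsto x^{1/s}$, the $W^{-1,2p}$ bound on $\partial_t\rho_k$, and mollifier interpolation with the balancing exponent $\vartheta$. Your one-sided averaging near $r=R$ is an acceptable substitute for the paper's extension operator, because no slip removes the boundary term in $\langle\partial_t\rho_k,\varphi\rangle$.

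Your Step 3 takes a different route, and it is correct. Theorem~\ref{thm:wall-independent} is proved in Section 5 without any sign condition on $v_0(R)$ and without anything from Section 6. Its hypotheses hold for each fixed small $k$: $T_k^*<\infty$, the upper bound comes from Proposition~\ref{prop:uniform-density-upper}, and $\gamma\ge 2\alpha-1>\alpha$. Its last assertion then gives the zero set $\{R\}$ directly, once you have uniform convergence and a zero from \eqref{eq:density-loss}.

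The paper instead localizes by hand inside the construction, in three steps:
\begin{enumerate}
\item On labels $\delta\ge 2k^{1-\chi}$ it proves $\rho_{0,k}\ge c_0k^{(1-\chi)/\alpha}$.
\item It propagates this lower bound along particle paths with the time and space H\"older bounds. The error is $Ck^{\beta\vartheta}$, which is $o\bigl(k^{(1-\chi)/\alpha}\bigr)$ precisely because of \eqref{eq:chi-localization}.
\item On the boundary collar it uses $W_\zeta\ge a_0/2$ to show that the boundary is the density minimum, with $\rho_k^{\alpha-1}(t,r)-\rho_k^{\alpha-1}(t,R)\ge c(R-r)$.
\end{enumerate}

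Your route buys brevity and makes \eqref{eq:chi-localization} unnecessary. Its constant $C_X$ depends on the $k$-dependent initial data, which is harmless here because $k$ is fixed. The paper's route is self-contained within the construction and gives $k$-uniform quantitative information: the boundary is the minimum on the whole collar, and $\rho_{k,*}(r)\ge (c(R-r))^{1/(\alpha-1)}$ with $c$ independent of $k$. The weighted bound of Theorem~\ref{thm:wall-independent} yields the same rate, but only with a $k$-dependent constant.

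Your closing cross-check is superfluous. Once the zero set is nonempty and contained in $\{R\}$, $\rho_{k,*}(R)=0$ is automatic. The capacity sketch there is also too vague to stand on its own. Your guess about the role of \eqref{eq:chi-localization} is off as well: in the paper it keeps the density positive off the collar (step 2 above), not to place the zero at $R$ via capacity.
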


\begin{proof}
	We first obtain the compactness needed to define the terminal trace.  Since
	$T_k^*\le\Theta_{\rm b}k^\beta<1$ for small $k$, the intermediate estimate
	\eqref{eq:uniform-HMZ-family-estimates} may be used with the fixed physical
	time bound $T_0=1$.  Set
	\[
		g_k:=\rho_k^s,
		\qquad
		\eta:=1-\frac3{2q}>0.
	\]
	The density upper bound in
	\eqref{eq:uniform-HMZ-family-estimates} and the finiteness of $B_R$ give
	\[
		\norm{g_k(t)}_{L^{2q}(B_R)}
		\le \abs{B_R}^{1/(2q)}
		\norm{\rho_k(t)}_{L^\infty(B_R)}^s
		\le C.
	\]
	Combining this with the gradient bound in the same estimate yields
	\[
		\sup_{0<k<k_0}\ \sup_{0\le t<T_k^*}
		\norm{g_k(t)}_{W^{1,2q}(B_R)}\le C.
	\]
	Since $2q>3$, Morrey's inequality in three dimensions now gives
	\[
		\abs{g_k(t,x)-g_k(t,y)}
		\le C\abs{x-y}^{\eta},
		\qquad x,y\in\overline{B_R}.
	\]
	It remains to pass from $g_k=\rho_k^s$ to $\rho_k$.  If $0<s\le1$, then
	$z\mapsto z^{1/s}$ is Lipschitz on the common bounded range of the $g_k$,
	so
	\[
		\abs{g_k(t,x)^{1/s}-g_k(t,y)^{1/s}}
		\le C\abs{g_k(t,x)-g_k(t,y)}.
	\]
	If $s>1$, the elementary inequality
	\[
		\abs{A^{1/s}-B^{1/s}}
		\le\abs{A-B}^{1/s},
		\qquad A,B\ge0,
	\]
	instead gives the exponent $\eta/s$.  Thus, in both cases,
	\[
		\abs{\rho_k(t,x)-\rho_k(t,y)}
		\le C\abs{x-y}^{\eta/\max\{1,s\}}
		=C\abs{x-y}^{\kappa}.
	\]
	Together with the uniform density upper bound, this proves
	\begin{equation}
		\label{eq:terminal-space-holder}
		\sup_{0<k<k_0}\ \sup_{0\le t<T_k^*}
		\norm{\rho_k(t)}_{C^{0,\kappa}(\overline{B_R})}\le C.
	\end{equation}
	The density upper bound and the $2p$ moment in
	\eqref{eq:uniform-HMZ-family-estimates} give
	\[
		\begin{aligned}
		\norm{\rho_k\boldsymbol u_k}_{L^{2p}(B_R)}^{2p}
		&=\int_{B_R}\rho_k^{2p}\abs{\boldsymbol u_k}^{2p}\dd x\\
		&=\int_{B_R}\rho_k^{2p-1}
		\bigl(\rho_k\abs{\boldsymbol u_k}^{2p}\bigr)\dd x\\
		&\le\norm{\rho_k}_{L^\infty}^{2p-1}
		\int_{B_R}\rho_k\abs{\boldsymbol u_k}^{2p}\dd x\\
		&\le C.
		\end{aligned}
	\]
	Let $(2p)'=2p/(2p-1)$.  For
	$\varphi\in W^{1,(2p)'}(B_R)$, the continuity equation gives
	\[
		\begin{aligned}
		\abs{\langle\partial_t\rho_k,\varphi\rangle}
		&=\abs{\int_{B_R}\rho_k\boldsymbol u_k
		\mathbin{\cdot}\nabla\varphi\dd x}\\
		&\le\norm{\rho_k\boldsymbol u_k}_{L^{2p}(B_R)}
		\norm{\nabla\varphi}_{L^{(2p)'}(B_R)}
		\le C\norm{\varphi}_{W^{1,(2p)'}(B_R)}.
		\end{aligned}
	\]
	Hence
	\[
		\norm{\partial_t\rho_k(t)}_{W^{-1,2p}(B_R)}\le C
	\]
	uniformly for $0\le t<T_k^*$.  Integrating this estimate in time, we obtain, for $0\le s_1<t<T_k^*$,
	\begin{equation}
		\label{eq:terminal-negative-time-bound}
		\norm{\rho_k(t)-\rho_k(s_1)}_{W^{-1,2p}(B_R)}
		\le C\abs{t-s_1}.
	\end{equation}

	We next interpolate the spatial estimate
	\eqref{eq:terminal-space-holder} with the time estimate in a negative norm
	\eqref{eq:terminal-negative-time-bound}.  Put
	\[
		h:=\rho_k(t)-\rho_k(s_1),
		\qquad
		\tau:=\abs{t-s_1}.
	\]
	The case $\tau=0$ is immediate, so suppose $\tau>0$.  By
	\eqref{eq:terminal-space-holder},
	\[
		\norm{h}_{C^{0,\kappa}(\overline{B_R})}\le C.
	\]
	Fix a standard nonnegative mollifier
	$\phi\in C_c^\infty(\mathbb R^3)$ with $\int_{\mathbb R^3}\phi=1$, and set
	\[
		\phi_\ell(x):=\ell^{-3}\phi(x/\ell),
		\qquad 0<\ell<1.
	\]
	After applying a fixed extension operator for the ball, bounded in the two
	norms used below, define $h_\ell:=\phi_\ell*h$.  The H\"older estimate gives
	\[
		\begin{aligned}
		\abs{h(x)-h_\ell(x)}
		&\le \int_{\mathbb R^3}\phi_\ell(y)
		\abs{h(x)-h(x-y)}\dd y\\
		&\le C\int_{\mathbb R^3}\phi_\ell(y)\abs y^\kappa\dd y\\
		&=C\ell^\kappa
		\int_{\mathbb R^3}\phi(z)\abs z^\kappa\dd z
		\le C\ell^\kappa.
		\end{aligned}
	\]
	Consequently,
	\[
		\norm{h-h_\ell}_{L^\infty(B_R)}\le C\ell^\kappa.
	\]

	To estimate the smoothed part, let $r=2p$ and
	$r'=(2p)'=2p/(2p-1)$.  For each fixed $x$,
	\[
		h_\ell(x)=\left\langle h,\phi_\ell(x-\cdot)\right\rangle.
	\]
	The exact three-dimensional scaling of the mollifier is
	\[
		\norm{\phi_\ell}_{L^{r'}(\mathbb R^3)}
		=\ell^{-3+3/r'}\norm\phi_{L^{r'}}
		=\ell^{-3/r}\norm\phi_{L^{r'}},
	\]
	while
	\[
		\norm{\nabla\phi_\ell}_{L^{r'}(\mathbb R^3)}
		=\ell^{-4+3/r'}\norm{\nabla\phi}_{L^{r'}}
		=\ell^{-1-3/r}\norm{\nabla\phi}_{L^{r'}}.
	\]
	Since $0<\ell<1$, the gradient term dominates, and hence
	\[
		\norm{\phi_\ell(x-\,\cdot)}_{W^{1,(2p)'}}
		\le C\ell^{-1-3/(2p)}.
	\]
	Duality and \eqref{eq:terminal-negative-time-bound} now imply
	\[
		\begin{aligned}
		\abs{h_\ell(x)}
		&\le \norm h_{W^{-1,2p}}
		\norm{\phi_\ell(x-\,\cdot)}_{W^{1,(2p)'}}\\
		&\le C\tau\ell^{-1-3/(2p)}.
		\end{aligned}
	\]
	Taking the supremum in $x$ and combining the two parts yields
	\[
		\norm h_{L^\infty(B_R)}
		\le C\left(
		\ell^\kappa+\tau\ell^{-1-3/(2p)}
		\right).
	\]
	Because $T_k^*<1$, one has $0<\tau<1$.  Choose the scale by balancing the
	two terms:
	\[
		\ell^{\kappa+1+3/(2p)}=\tau,
		\qquad
		\ell=\tau^{1/(\kappa+1+3/(2p))}\in(0,1).
	\]
	For this choice,
	\[
		\ell^\kappa
		=\tau^{\kappa/(\kappa+1+3/(2p))},
		\qquad
		\tau\ell^{-1-3/(2p)}
		=\tau^{\kappa/(\kappa+1+3/(2p))}.
	\]
	By the definition of $\vartheta$ in \eqref{eq:terminal-exponents}, this
	proves
	\begin{equation}
		\label{eq:terminal-time-holder}
		\norm{\rho_k(t)-\rho_k(s_1)}_{L^\infty(B_R)}
		\le C\abs{t-s_1}^{\vartheta}.
	\end{equation}
	Thus $\rho_k(t)$ is Cauchy in $L^\infty$ as $t\uparrow T_k^*$ and converges
	uniformly to a unique
	$\rho_{k,*}\in C^{0,\kappa}(\overline{B_R})$.  Conservation of mass and
	$M_k\ge M_->0$ show that this terminal profile is nontrivial, while
	\eqref{eq:density-loss} shows that it has at least one zero.

	It remains to locate the zero.  Set
	\begin{equation}
		\label{eq:terminal-delta-b}
		\delta_{{\rm b},k}:=2kK_k=2k^{1-\chi}.
	\end{equation}
	On the initial affine collar, $(W_0)_\zeta\ge a_0$ gives
	\[
		\rho_{0,k}(r_{0,k}(\delta))^\alpha
		=kW_0(\delta/k)\ge a_0\delta.
	\]
	At $\delta\ge\delta_{{\rm b},k}$ this is bounded below by
	$2a_0k^{1-\chi}$, and the center extension has a positive lower bound
	independent of small $k$.  Consequently, there is a $c_0>0$, independent of
	$k$, such that
	\begin{equation}
		\label{eq:terminal-initial-complement-lower}
		\rho_{0,k}(r_{0,k}(\delta))
		\ge c_0k^{(1-\chi)/\alpha},
		\qquad
		\delta_{{\rm b},k}\le\delta\le M_k.
	\end{equation}

	At a fixed label in inward mass coordinates, $r_t=u$.  The estimate for the transverse strain rate functions, the radius bound $r\le R$, and \eqref{eq:finite-endpoint} give
	\begin{equation}
		\label{eq:terminal-particle-motion}
		\abs{r(t,\delta)-r_{0,k}(\delta)}
		\le Ct\le Ck^\beta.
	\end{equation}
	Insert the intermediate value $\rho_{0,k}(r(t,\delta))$.  Using
	\eqref{eq:terminal-time-holder}, then
	\eqref{eq:terminal-space-holder}, and finally
	\eqref{eq:terminal-particle-motion}, we obtain
	\begin{align}
		&\abs{\rho_k(t,r(t,\delta))
		-\rho_{0,k}(r_{0,k}(\delta))}
	\notag\\
		&\qquad\le Ck^{\beta\vartheta}+Ck^{\beta\kappa}
		\le Ck^{\beta\vartheta}
		=o\left(k^{(1-\chi)/\alpha}\right).
	\label{eq:terminal-complement-error}
	\end{align}
	Here $\vartheta<\kappa$, and
	\[
		\beta\vartheta>\frac{1-\chi}{\alpha}
		\quad\Longleftrightarrow\quad
		\chi>1-(\alpha-1)\vartheta,
	\]
	which is exactly \eqref{eq:chi-localization}.  Hence, for every sufficiently
	small fixed $k$,
	\begin{equation}
		\label{eq:terminal-complement-positive}
		\rho_k(t,r(t,\delta))
		\ge\frac{c_0}{2}k^{(1-\chi)/\alpha},
		\qquad
		\delta_{{\rm b},k}\le\delta\le M_k,
		\quad 0\le t<T_k^*.
	\end{equation}

	On the moving boundary collar $0\le\delta\le\delta_{{\rm b},k}$, the
	intermediate slope estimate \eqref{eq:proof-slope-bound}, used for
	$0\le\theta\le\Theta_{\rm b}$, gives
	\begin{equation}
		\label{eq:terminal-physical-slope}
		(\rho_k^\alpha)_\delta=W_\zeta\ge\frac{a_0}{2}>0.
	\end{equation}
	Thus the boundary is the density minimum throughout this collar.  Since
	$\delta_r=-\rho r^2$, one also has the exact identity
	\[
		\partial_r\rho^{\alpha-1}
		=-\frac{\alpha-1}{\alpha}r^2W_\zeta.
	\]
	Because $r\ge R/2$ on the collar, integration from $r$ to $R$ gives
	\begin{equation}
		\label{eq:terminal-collar-interior-lower}
		\rho_k(t,r)^{\alpha-1}
		-\rho_k(t,R)^{\alpha-1}
		\ge c(R-r).
	\end{equation}

	The complement bound \eqref{eq:terminal-complement-positive} and the collar monotonicity \eqref{eq:terminal-physical-slope} show that the density loss in \eqref{eq:density-loss} can occur only at the boundary.  Uniform terminal convergence therefore gives $\rho_{k,*}(R)=0$.  For every fixed $r<R$, the complement/collar dichotomy and
	\eqref{eq:terminal-collar-interior-lower} give
	\[
		\rho_k(t,r)\ge
		\min\left\{
		\frac{c_0}{2}k^{(1-\chi)/\alpha},
		\bigl(c(R-r)\bigr)^{1/(\alpha-1)}
		\right\}>0.
	\]
	Passing to the terminal limit proves
	$\rho_{k,*}(r)>0$ for every $r<R$.  Hence the terminal zero set is exactly
	$\{R\}$.

	As a final intermediate byproduct, the effective boundary velocity retains its strictly negative sign.  Indeed, no slip gives $u_k(t,R)=0$, so the equation for the effective velocity at the boundary reduces to
	\[
		\frac{\dd}{\dd t}v_k(t,R)
		+F_1(\rho_k(t,R))v_k(t,R)=0.
	\]
	Using $v_{0,k}(R)=-R^2A_*$ from \eqref{eq:wall-values}, we obtain
	\[
		v_k(t,R)
		=-R^2A_*
		\exp\left(-\int_0^tF_1(\rho_k(s,R))\dd s\right)<0,
		\qquad 0\le t<T_k^*.
	\]
	This completes \eqref{eq:terminal-wall-result}.
\end{proof}

\begin{proof}[Proof of Theorem~\ref{thm:bad-wall}]
	Choose the family of smooth positive compatible data from
	Lemma~\ref{lem:data}.  Proposition~\ref{prop:local-restart} produces its
	maximal positive radial classical branch, and
	\eqref{eq:wall-values} gives $v_{0,k}(R)<0$.  After choosing $k_0$ small
	enough for all preceding uniform estimates, Proposition~\ref{prop:finite-capacity}
	yields
	\[
	0<T_k^*\le \Theta_{\rm b}k^{(\alpha-1)/\alpha}<\infty.
	\]
	Finally, Proposition~\ref{prop:terminal-wall} supplies the unique nontrivial
	terminal density profile, uniform convergence to that profile, and the exact
	zero set $\{R\}$.  Taking $C=\Theta_{\rm b}$ completes the proof.
\end{proof}

\section*{Acknowledgments}
X Huang is partially supported by Chinese Academy of Sciences Project for Young Scientists in Basic Research (Grant No. YSBR-031), National Natural Science Foundation of China (Grant Nos. 12494542, 11688101). B Li is supported in part by National Key R\&D Program of China (No.~2024YFA\allowbreak 1013303).

\par\medskip
\noindent\textbf{Data availability statement.}
Data sharing is not applicable to this article.
\par

\medskip
\noindent\textbf{Conflict of interest.}
The authors declare that they have no conflict of interest.
\par

\end{document}